\documentclass[11pt]{amsart}

\usepackage[
paper=a4paper,
text={147mm,230mm},centering
]{geometry}

\iftrue
\makeatletter
\def\@settitle{%
  \vspace*{-20pt}
  \begin{flushleft}%
    \baselineskip14\p@\relax
    \normalfont\bfseries\LARGE
    \@title
  \end{flushleft}%
}
\def\@setauthors{%
  \begingroup
  \def\thanks{\protect\thanks@warning}%
  \trivlist
  \large \@topsep30\p@\relax
  \advance\@topsep by -\baselineskip
  \item\relax
  \author@andify\authors
  \def\\{\protect\linebreak}%
  \authors
  \ifx\@empty\contribs
  \else
    ,\penalty-3 \space \@setcontribs
    \@closetoccontribs
  \fi
  \normalfont
  \endtrivlist
  \endgroup
}
\def\@setabstracta{%
    \ifvoid\abstractbox
  \else
    \skip@25\p@ \advance\skip@-\lastskip
    \advance\skip@-\baselineskip \vskip\skip@
    \box\abstractbox
    \prevdepth\z@ 
    \vskip-10pt
  \fi
}
\renewenvironment{abstract}{%
  \ifx\maketitle\relax
    \ClassWarning{\@classname}{Abstract should precede
      \protect\maketitle\space in AMS document classes; reported}%
  \fi
  \global\setbox\abstractbox=\vtop \bgroup
    \normalfont\small
    \list{}{\labelwidth\z@
      \leftmargin0pc \rightmargin\leftmargin
      \listparindent\normalparindent \itemindent\z@
      \parsep\z@ \@plus\p@
      
    }%
    \item[\hskip\labelsep\bfseries\abstractname.]%
}{%
  \endlist\egroup
  \ifx\@setabstract\relax \@setabstracta \fi
}
\def\section{\@startsection{section}{1}%
  \z@{-1.2\linespacing\@plus-.5\linespacing}{.8\linespacing}%
  {\normalfont\bfseries\large}}
\def\subsection{\@startsection{subsection}{2}%
  \z@{-.8\linespacing\@plus-.3\linespacing}{.3\linespacing\@plus.2\linespacing}%
  {\normalfont\bfseries}}
\def\subsubsection{\@startsection{subsubsection}{3}%
  \z@{.7\linespacing\@plus.1\linespacing}{-1.5ex}%
  {\normalfont\itshape}}
\def\@secnumfont{\bfseries}
\makeatother
\fi 

\usepackage{amssymb}
\usepackage{mathrsfs}
\usepackage[all]{xy}
\usepackage{graphicx,color,float}
\usepackage[bookmarks]{hyperref}
\usepackage{pinlabel}
\usepackage[textwidth=1.3in,color=yellow]{todonotes}
\usepackage{amsmath}
\usepackage{pb-diagram,pb-xy}
\usepackage{amsmath}
\usepackage{amsfonts}
\usepackage{graphicx}
\usepackage{amscd}
\usepackage{mathtools}
\usepackage{url}
\usepackage{caption}
\usepackage{subcaption}
\usepackage{fancybox}
\usepackage{wrapfig}
\usepackage{color}
\usepackage{multirow, url}
\usepackage{tikz}
\usetikzlibrary{shapes.geometric, arrows}
\usetikzlibrary{shapes}
\usepackage{xcolor}
\usepackage[normalem]{ulem}  

\tikzstyle{startstop} = [rectangle, rounded corners, 
minimum width=3cm, 
minimum height=1cm,
text centered, 
draw=black]

\tikzstyle{io} = [trapezium, 
trapezium stretches=true, 
trapezium left angle=70, 
trapezium right angle=110, 
minimum width=3cm, 
minimum height=1cm, text centered, 
draw=black, fill=blue!30]

\tikzstyle{process} = [rectangle, rounded corners, 
minimum width=4cm, 
minimum height=1cm,
text centered, 
draw=black]

\tikzstyle{decision} = [diamond, 
minimum width=3cm, 
minimum height=1cm, 
text centered, 
draw=black, 
fill=green!30]
\tikzstyle{arrow} = [thick,->,>=stealth]

\textwidth=\paperwidth \advance\textwidth by-2\oddsidemargin
\advance\oddsidemargin by-1in
\evensidemargin=\oddsidemargin
\calclayout

\theoremstyle{plain}
\newtheorem{theorem}{Theorem}[section]

\newtheorem{proposition}[theorem]{Proposition}
\newtheorem{lemma}[theorem]{Lemma}

\newtheorem{corollary}[theorem]{Corollary}

\newtheorem{setup}[theorem]{Setup}

\theoremstyle{definition}

\newtheorem{definition}[theorem]{Definition}
\newtheorem{example}[theorem]{Example}

\theoremstyle{remark}
\newtheorem{remark}[theorem]{Remark}

\newcommand{\C}{\mathbb{C}}

\newcommand{\R}{\mathbb{R}}

\newcommand{\Z}{\mathbb{Z}}

\newcommand{\CP}{\mathbb{C}P}

\newcommand{\git}{\mathbin{/\mkern-6mu/}}

\def\mcal{\mathcal}

\numberwithin{equation}{section} \numberwithin{table}{section}

\def\to{\mathchoice{\longrightarrow}{\rightarrow}{\rightarrow}{\rightarrow}}
\makeatletter
\newcommand{\shortxra}[2][]{\ext@arrow 0359\rightarrowfill@{#1}{#2}}
\def\longrightarrowfill@{\arrowfill@\relbar\relbar\longrightarrow}
\newcommand{\longxra}[2][]{\ext@arrow 0359\longrightarrowfill@{#1}{#2}}

\makeatother
\numberwithin{equation}{section}

\begin{document}                                                                          
\title[From flag varieties to basic affine spaces]{Lifting holomorphic disks from flag varieties to\\
basic affine spaces}

\author{Yoosik Kim}
\address{Department of Mathematics and Institute of Mathematical Science, Pusan National University, Busan, Republic of Korea}
\email{yoosik@pusan.ac.kr}
\thanks{The research of Y.  \ Kim was supported by the National Research Foundation of Korea(NRF) grant funded by the Korea government (MSIT) (RS-2025-16069532 and RS-2020-NR049535).}


\begin{abstract}
Let $G$ be a complex reductive algebraic group, the complexification of a compact Lie group $K$. Consider a holomorphic principal $G$-bundle whose total space contains a $K$-invariant Lagrangian submanifold $L$. We develop a method for lifting holomorphic disks from the base with boundary on $L/K$ to the principal $G$-bundle. We show that the Gross--Hacking--Keel--Kontsevich superpotential restricted to a distinguished class of seeds of the basic affine space is obtained by lifting holomorphic disks in the corresponding flag manifold.
\end{abstract}

\maketitle
\setcounter{tocdepth}{1} 
\tableofcontents

\section{Introduction}

In \cite{GHKK18}, Gross--Hacking--Keel--Kontsevich (GHKK) reformulated and proved the Fock--Goncharov (FG) conjecture \cite{FG06} for cluster varieties by constructing the $\vartheta$-basis. Let $(\mathcal{A},\mathcal{X})$ be a dual pair of cluster varieties. The full FG conjecture \cite[Definition 0.6]{GHKK18} asserts that the $\vartheta$-basis of the coordinate ring of $\mathcal{A}$ is parametrized by the tropical points of the dual cluster variety $\mathcal{X}$. Let $\overline{\mathcal{A}}$ denote the partial compactification of $\mathcal{A}$ obtained by allowing the frozen variables to vanish. The regular functions on $\overline{\mathcal{A}}$ are those on $\mathcal{A}$ whose orders of vanishing along the boundary divisors corresponding to the frozen variables are nonnegative. To characterize these regular functions, GHKK introduced the superpotential
$$
W \coloneqq \sum_j \vartheta_j,
$$
defined by the sum of the theta functions associated with the frozen variables. Assuming the full FG conjecture, the tropical points parametrizing the $\vartheta$-basis elements that are regular on $\overline{\mathcal{A}}$ are the integral points lying in the intersection of the half-spaces determined by the tropicalization of $W$.

Gross--Hacking--Keel--Kontsevich \cite{GHKK18} conjectured the superpotential $W$ arises from holomorphic disk counting in \cite{CO06, Aur07} as the broken lines associated with theta functions are intended to be the tropical counterparts of holomorphic disks. The main goal of this paper is to verify this conjecture for the basic affine space $G/U$, which is one of the representation-theoretic examples therein where $G \coloneqq \mathrm{SL}_{n+1}(\C)$. More precisely, we prove that the GHKK superpotential coincides with the disk potential for the basic affine space $G/U$ when restricted to a distinguished class of cluster charts of the $\mathcal{X}$-cluster variety. 

Our approach is based on relating $G/U$ to the flag variety $G/B$. The basic affine space $G/U$ plays a fundamental role in representation theory. It is a smooth quasi-affine variety whose coordinate ring decomposes into the direct sum of all irreducible representations of $G$, that is,
$$
\mathbb{C}[G/U] \simeq \bigoplus_{\lambda \in \mathsf{P}^+} V^\vee_\lambda
$$
where $\mathsf{P}^+$ is the set of dominant integral weights and $V_\lambda$ denotes the irreducible representation of highest weight $\lambda$. To recover the flag variety $G/B$, one fixes a regular dominant integral weight $\lambda$. We take $\lambda$ to be twice the sum of the fundamental weights, which corresponds to the anticanonical line bundle on $G/B$. By the Borel--Weil theorem, we have $G/B \simeq \mathrm{Proj}(\oplus_{k} V_{k \lambda}^\vee)$. Alternatively, the flag variety $G/B$ can be obtained from $G/U$ by taking a twisted GIT quotient by $B/U \simeq (\C^*)^n$ or a symplectic reduction associated with the character $\lambda$. Moreover, $G/U$ is a principal $B/U$-bundle over $G/B$. 

Both $G/U$ and $G/B$ carry cluster algebra structures. The basic affine space $G/U$ and $G/B$ contain the double Bruhat cell $G^{e, w_0}$ and the unipotent cell $U^-_{w_0}$ as open dense subsets, respectively. Their coordinate rings have upper cluster algebra structures and are related by specializing a certain collection of frozen variables. By Magee \cite{Mag15}, the full FG conjecture holds for $G/U$ and the coordinate rings $\C[G/U]$ and $\C[G^{e, w_0}]$ are isomorphic. Then $G/U$ carries the GHKK superpotential for parametrizing the $\vartheta$-basis for $\C[G/U]$.

On the other hand, the present author with Cho, Kim, and Park~\cite{CKKP25} constructed a monotone Lagrangian torus $N_{\mathsf{s}} \subseteq G/B$ for each seed $\mathsf{s}$ of the cluster algebra structure on $\C[U^-_{w_0}]$, building on the construction of Newton--Okounkov bodies, toric degenerations, completely integrable systems developed in~\cite{And13, HK15, FO25}. Since the diagonal torus $T \simeq (S^1)^n$ acts on $G/U$ in a Hamiltonian fashion, we have a principal $T $-bundle $q_\lambda \colon P \to G/B$ where $P$ is the level set corresponding to $\lambda$ of its moment map. By restricting the principal bundle $q_\lambda$ over $N_\mathsf{s}$, we obtain a Lagrangian torus $L_\mathsf{s} \coloneqq q_\lambda^{-1}(N_{\mathsf{s}})$ in $G/U$. 

The expectation is that the cluster charts of the cluster $\mathcal{X}$-variety dual to $\C[G/U]$ corresponds to the SYZ mirrors of the Lagrangian tori $\{ L_\mathsf{s} \}$ and that the GHKK superpotential restricted to the cluster chart $\mathcal{X}_\mathsf{s}$ agrees with the disk potential of $L_\mathsf{s}$.

In this paper, we prove this correspondence for a distinguished class of seeds by showing that the disk potential of $L_\mathsf{s}$ is obtained by lifting the disk potential of $N_\mathsf{s}$, see Theorems~\ref{theorem_maintheorem2} and~\ref{theorem_GHKKdisk}. For seeds arising from reduced expressions of Weyl group elements, the GHKK superpotential restricted to $\mathcal{X}_\mathsf{s}$ is explicitly written by the work of \cite{Mag15, BF19}. In \cite{CKLP23}, the present author with Cho, Lee, and Park computed the disk potential of $N_\mathsf{s}$ for a certain class of seeds. We then prove that the disk potential of $L_\mathsf{s}$ obtained by lifting holomorphic disks bounded by $N_\mathsf{s}$ agrees with the GHKK superpotential.

\begin{center}
\begin{table}[h]
\begin{tabular}{|c|c|c|c|}
\hline
& Basic affine space & $\longrightarrow$ & Flag variety \\
\hline \hline
Representations & $\bigoplus_{\lambda \in \mathsf{P}^+} V_\lambda$ & \mbox{choosing a dominant weight} & $V_\lambda$ \\
\hline
 \multirow{2}{*}{Homogeneous spaces} &  \multirow{2}{*}{$G/U$} &\text{twisted GIT quotient} & \multirow{2}{*}{$G/B$} \\ \cline{3-3}
       &   & \text{symplectic reduction} &  \\
\hline
\text{Cluster algebras} & $\C[G^{e, w_0}] $ & \text{specializing frozen variables} & $\C[U^-_{w_0}]$ \\
\hline
\hline
\text{Lagrangian tori} & $\{ L_\mathsf{s}\}$ & \text{quotient} & $\{ N_\mathsf{s}\}$ \\
\hline
\end{tabular}
\end{table}
\end{center}

For the lifting, we consider the following setup. Let $G$ be a complex reductive algebraic group that is the complexification of a compact connected Lie group $K$. Suppose that $P_G$ is a K\"{a}hler manifold and
$$
q \colon P_G \to M
$$
is a holomorphic principal $G$-bundle over a K\"{a}hler manifold $M$ obtained as the complexification of a principal $K$-bundle $q|_{P_K}\colon P_K \to M$. Let $N$ be a Lagrangian submanifold of $M$. Suppose that its preimage 
$$
L \coloneqq q |_{P_K}^{-1}(N)
$$
is a Lagrangian submanifold of $P_G$. Such a geometric setting naturally arises when a (twisted) GIT quotient is identified with the corresponding symplectic reduction by Kempf--Ness theorem \cite{KN78}, provided that the $K$-action on the level set is free.

The correspondence between holomorphic disks with boundary on $L$ in $P_G$ and those with boundary on $N$ in $M$ was established in \cite{Kim26}. In this paper, we focus on the case where $G=(\C^*)^n$ and $K=(S^1)^n$ and assume that both $N$ and $L \simeq N \times K$ are Lagrangian tori.

One of the main goals of this paper is to determine the disk potential of $L$ from the disk counting invariants of $N$. Although every holomorphic disk in $N$ admits a lift to $P_G$ via the correspondence, the boundary class of its lift contains an additional component coming from the torus fiber of the projection $q|_L \colon L \to N$. Indeed, $q|_L$ induces a decomposition
$$
\pi_1(L) \simeq \pi_1(N) \times \pi_1(K),
$$
so that the boundary class of a lifted disk has both a base component and a fiber component. While the base component is determined by the original disk in $N$, the fiber component is not. In particular, a single Laurent monomial in the disk potential of $N$ may lift to multiple distinct Laurent monomials in the disk potential of $L$.

To determine this additional fiber component, we introduce a \emph{boundary lifting lemma}, see Lemma~\ref{lemma_boundaryliftinglemma}. The key idea is to cap off this disk class into a spherical class. In general, however, the spherical class obtained by capping off a disk in the quotient does not lift to a spherical class in the total space. This discrepancy can be used to fiber components.  To measure this discrepancy, we introduce a suitable associated vector bundle and cap the lifted disks with fiberwise disk classes. It is measured by the Chern number of the associated bundle on the corresponding spherical class in the base. 

Another issue is that the total space $P_G$ is noncompact so the moduli spaces of holomorphic disks need not be compact. Under suitable positivity and regularity assumptions, we establish the compactness results and prove that the disk counting invariants of the quotient and the total space agree, see Proposition~\ref{prop_liftingmoduli}. As a consequence, the disk potential of $L$ can be computed explicitly from the disk potential of $N$.

The paper is organized as follows. 
In Section~\ref{sec_liftingholodisks}, we mainly review the results form \cite{Kim26} and compare the disk counting invariants of $L$ and $N$. In Section~\ref{sec_boundaryliftinglemma}, we introduce and prove a boundary lifting lemma. Section~\ref{sec_diskpotentialbasicaffinespaces} derive an explicit formula of the disk potential of lifted Lagrangian torus in terms of the disk counting invariants of the corresponding monotone Lagrangian torus in $G/B$. Finally, in Section~\ref{sec_GHKKstring}, we focus on the monotone Lagrangian tori constructed by the cluster algebra structure on $\C[U^{-}_{w_0}]$ and explain how the GHKK superpotential is recovered from the lifted Lagrangian tori.

\subsection*{Acknowledgement} 
The author would like to express his gratitude to Kiumars Kaveh for posing the question that inspired this paper.

\section{Lifting holomorphic disks}\label{sec_liftingholodisks}

The goal of this section is to develop a method for lifting holomorphic disks from the base of a holomorphic principal bundle to its total space, based on the results of \cite{Kim26}. In particular, we relate the disk counting invariant of a lifted class to the corresponding disk counting invariant on the base.

We briefly recall the notation for moduli spaces of holomorphic disks and disk potentials. Let $(X,\omega)$ be a symplectic manifold together with a compatible almost complex structure $J$ and let $L$ be a (relatively) spin and orientable Lagrangian submanifold. We denote by $j_0$ the standard complex structure on the unit disk $\mathbb{D} \subseteq \C$. For a relative homotopy class $\beta \in \pi_2(X, L)$, let 
$$
\mathcal{M}(X,L,J,\beta) \coloneqq \left\{ \varphi \colon (\mathbb{D}, \partial \mathbb{D}) \to (X,L) \mid d \varphi \circ j_0 = J \circ d \varphi, \, [\varphi] = \beta \right\} / \mathrm{Aut}(\mathbb{D})
$$
be the moduli space of $J$-holomorphic disks in $\beta$. We also consider the moduli space with one boundary marked point,
$$
\mathcal{M}_1(X,L,J,\beta) \coloneqq \left\{ \left( \varphi, z_0 \right) \mid \varphi \in {\mathcal{M}}(X,L,J,\beta), z_0 \in \partial \mathbb{D}  \right\} / \mathrm{Aut}(\mathbb{D}, z_0).
$$
We denote by $\mathcal{M}^{st}_1(X,L,J,\beta)$ the moduli space of stable maps in $\beta$ from a bordered Riemann surface of genus zero with one boundary marked point. 

Suppose that $\mathcal{M}_1(X,L,J,\beta)$ is a compact manifold without boundary. After choosing orientations on both $\mathcal{M}_1(X,L,J,\beta)$ and $L$, we define the disk counting invariant of $\beta$ as follows.

\begin{definition}\label{def_diskcountinginv}
The \emph{disk counting invariant} $n_\beta$ (or \emph{open Gromov--Witten invariant}) of $\beta$ is defined by the degree of the evaluation map
\begin{equation}\label{equ_degreeev}
\mathrm{ev}_0 \colon \mcal{M}_1(X,L,J,\beta) \to L, \quad [(\varphi, z_0)] \mapsto \varphi(z_0).
\end{equation}
Note that $n_\beta$ can be nonzero only if $\dim \mcal{M}_1(X,L,J,\beta) = \dim L$. 
\end{definition}

Throughout this paper, when defining disk counting invariants, we restrict our attention to the following setting. For each relative homotopy class $\beta \in \pi_2(X, L)$ of Maslov index two, suppose that
\begin{equation}\label{equ_mmst}
\mathcal{M}_1(X, L, J, \beta) = \mathcal{M}^{st}_1(X, L, J, \beta)
\end{equation} 
and
\begin{equation}\label{equ_mmcpt}
\mathcal{M}_1(X, L, J, \beta) \mbox{ is compact.} 
\end{equation}
We will simply say that $\mathcal{M}_1(X,L,J,\beta)$ is \emph{compact} when both conditions~\eqref{equ_mmst} and~\eqref{equ_mmcpt} hold. Thus, it means that every stable map in $\beta$ is a holomorphic disk and the moduli space of such disks is compact.

\begin{remark}
If $X$ is compact, then the condition~\eqref{equ_mmst} implies~\eqref{equ_mmcpt} by the Gromov compactness theorem. When $X$ is not compact, a sequence of holomorphic disks may escape to infinity and hence $\mathcal{M}_1(X, L, J, \beta)$ may not be compact.
\end{remark}

To introduce typical examples of Lagrangian submanifolds satisfying~\eqref{equ_mmst}, we recall the following notions.

\begin{definition}\label{definition_positiveconditions}
Let $(X, \omega)$ be a symplectic manifold together with a compatible almost complex structure $J$ and let $L$ be an orientable and spin Lagrangian submanifold. 
\begin{enumerate}
\item The pair $(X, J)$ is called \emph{symplectically Fano} if every nonconstant $J$-holomorphic sphere represents a class $\alpha \in \pi_2(X)$ of positive first Chern number.
\item The pair $(L, J)$ is called \emph{positive} if every nonconstant $J$-holomorphic disk represents a class $\beta \in \pi_2(X, L)$ of Maslov index at least two. 
\end{enumerate}
When the choice of $J$ is clear from the context, we simply say that $L$ is positive and $X$ is symplectically Fano. We call the pair $(X, L)$ \emph{positive} if $X$ is symplectically Fano and $L$ is positive.
\end{definition}

\begin{remark}
Suppose that the pair $(X, L)$ is positive and for every relative homotopy class $\beta \in \pi_2(X, L)$ of Maslov index two, at least one of the following conditions holds$\colon$
\begin{itemize}
\item $\partial \beta \neq 0$ in $\pi_1(L)$ whenever $\mathcal{M}_1^{st}(X, L, J, \beta) \neq \emptyset$, or
\item the minimal Chern number of $X$ is at least two.
\end{itemize}
Then~\eqref{equ_mmst} holds. 
\end{remark}

\begin{definition} 
Let $(X, \omega)$ be a symplectic manifold and let $L$ be a spin and orientable Lagrangian submanifold. Assume that $X$ is simply connected.
\begin{enumerate}
\item The symplectic manifold $(X, \omega)$ is called \emph{monotone} if there exists a constant $\lambda > 0$ such that $c_1(TX)(\alpha) =  \lambda \cdot \omega(\alpha)$ for all $\alpha \in \pi_2(X)$. 
\item The Lagrangian submanifold $L$ is called \emph{monotone} if there exists a constant $\delta > 0$ such that $\mathrm{MI}_L (\beta) = \delta \cdot \omega (\beta) $ for all $\beta \in \pi_2 (X, L)$ where $\mathrm{MI}_L(\beta)$ is the Maslov index of $\beta$.
\end{enumerate}
Note that if $X$ is monotone and $L$ is a monotone Lagrangian submanifold of $X$, then $(X,L)$ is positive.
\end{definition}

Fix an ordered basis $(\theta_j)_{j=1}^m$ for $\pi_1(L)$ represented by oriented loops and identify $\pi_1(L)$ with the lattice $\Z^m$. To each lattice point $\mathbf{v} = (v_1, \dots, v_m) \in \Z^m$, we associate the Laurent monomial 
$$
\mathbf{y}^\mathbf{v} = y_1^{v_1} \dots y_m^{v_m}. 
$$

\begin{definition}[\cite{CO06, Aur07, FOOO10}]
Suppose that every homotopy class $\beta$ of Maslov index two is regular and $\mathcal{M}_1(X, L, J, \beta)$ is compact. 
The \emph{disk potential} $W_L$ of $L$ is defined by
\begin{equation}\label{equ_potentialWL}
W_L(\mathbf{y}) = \sum_\beta n_{\beta} \, \mathbf{y}^{\partial \beta}
\end{equation}
where the summation is taken over all relative homotopy classes $\beta$.
\end{definition}

\begin{remark}
Strictly speaking, the disk potential in~\eqref{equ_potentialWL} should be defined over the Novikov ring to record the symplectic areas of holomorphic disks. In this paper, we mainly consider  monotone Lagrangian tori $L$. In this case, all holomorphic disks of Maslov index two have the same symplectic area. Thus the Novikov parameter may be specialized to a nonzero complex number and we may work over the field of complex numbers.
\end{remark}

Next, we recall several results from \cite{Kim26}. Let $G$ be a complex reductive algebraic group and let $K$ be a compact connected Lie group whose complexification is $G$. Let 
\begin{equation}\label{equ_qbundleprojection}
q \colon P_G \to M
\end{equation}
be a holomorphic principal $G$-bundle where $P_G$ and $M$ are K\"{a}hler manifolds. 
We denote by $\omega$ (resp. $\omega_M$) the K\"{a}hler form on $P_{G}$ (resp. $M$) and by $J$ (resp. $J_M$) the complex structure on $P_{G}$ (resp. $M$). 

\begin{setup}\label{assumption_pgpmathbbg}
Suppose that $P_G$ contains a submanifold $P_K$ satisfying the following conditions.
\begin{enumerate}
\item The restriction $q |_{P_K} \colon P_K \to M$ is a principal $K$-bundle.
\item The principal $G$-bundle $P_G$ is isomorphic to the extension of $P_K$ to $G$, i.e., 
$$
P_G \simeq P_K \times_K G.
$$
\item The K\"{a}hler forms satisfy $q^* \omega_{M} = \omega |_{P_K}$.
\end{enumerate}
\end{setup}

Let $L \, (\subseteq P_K)$ be a $K$-invariant, orientable Lagrangian submanifold of $P_G$. Then the restriction of $q$ induces a principal $K$-bundle
\begin{equation}\label{equ_qLprincipal}
q |_{L} \colon L \to N \coloneqq L / K.
\end{equation}
In this setting, we recall several basic properties.

\begin{lemma}[Lemmas 3.2, 3.3 and 4.12 in \cite{Kim26}]\label{lemma_factsfromKIM}
The following statements hold.
\begin{enumerate}
\item The induced homomorphism $q_* \colon \pi_2 (P_G, L) \to \pi_2(M, N)$ is an isomorphism.
\item A class $\beta \in \pi_2 (P_G, L)$ is regular if and only if $q_* \beta$ is regular. 
\item The classes $\beta$ and $q_* \beta$ have the same Maslov index.
\item The Lagrangian submanifold $N$ is positive if and only if $L$ is positive.
\end{enumerate}
\end{lemma}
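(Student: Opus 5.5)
I will prove the four items in order, each one building on the previous.

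\emph{Item (1).} I will compare the long exact homotopy sequences of the pairs $(P_G, L)$ and $(M, N)$ via the map of pairs $q$. For this I need two fibration facts:
\begin{itemize}
\item $q \colon P_G \to M$ is a fibration with fiber $G$;
\item $q|_L \colon L \to N$ is a fibration with fiber $K$.
\end{itemize}
Because $G$ deformation retracts onto $K$ (polar/Cartan decomposition $G \simeq K \times \exp(i\mathfrak{k})$), the inclusion $K \hookrightarrow G$ is a homotopy equivalence. Hence the relative fibration $(P_G, L) \to (M, N)$ has fiber pair $(G, K)$, and $\pi_k(G, K) = 0$ for all $k$. The relative homotopy exact sequence of this map of pairs then gives $\pi_2(P_G, L) \simeq \pi_2(M, N)$.

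A more hands-on alternative is the five lemma applied to the ladder
$$\pi_2(L) \to \pi_2(P_G) \to \pi_2(P_G,L) \to \pi_1(L) \to \pi_1(P_G).$$
Mapping it to the corresponding sequence for $(M,N)$, each vertical map on absolute groups sits in exact sequences with fibers $K$ and $G$, and these fibers agree up to homotopy. Here I would use Setup~\ref{assumption_pgpmathbbg}(2): $P_G = P_K \times_K G$ retracts onto $P_K$, so one may replace $P_G$ by $P_K$. Then $(P_K, L) \to (M, N)$ is a pullback pair of $K$-bundles with $L = q|_{P_K}^{-1}(N)$, which gives a relative isomorphism directly by lifting relative disks and using path lifting.

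\emph{Item (3).} I will split $TP_G|_L$ into horizontal and vertical parts. The vertical tangent space at a point of $L$ is $\mathfrak{g} = \mathfrak{k} \oplus i\mathfrak{k}$, and $TL \cap \text{vertical} = \mathfrak{k}$, which is a totally real subspace of the trivial bundle $\mathfrak{g}$. Over a disk $u$ in $P_G$ with boundary on $L$, I get a splitting of the bundle pair
$$(u^*TP_G, u|_{\partial}^*TL) \cong (\text{vertical } \mathfrak{g}\text{-part}, \mathfrak{k}) \oplus ((q\circ u)^*TM, (q\circ u)|_\partial^* TN).$$
The vertical summand is trivialized by the infinitesimal action, so it contributes Maslov index $0$. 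The horizontal summand is identified with the pullback from $M$ via $dq$. Additivity of the Maslov index then gives $\mu(\beta) = \mu(q_*\beta)$.

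\emph{Items (2) and (4).} These follow from the correspondence of holomorphic disks. Any $J$-holomorphic disk $\tilde u$ in $P_G$ with boundary on $L$ projects to a $J_M$-holomorphic disk $q\circ\tilde u$, since $q$ is holomorphic. Conversely, given a disk $u$ in $M$ with boundary on $N$, I lift it by choosing a holomorphic trivialization of $u^*P_G$; this is possible because the bundle is over a disk. The problem is then to find a map $\mathbb D \to G$ with boundary values in the $K$-coset prescribed by $L$. This is a Riemann--Hilbert / Birkhoff factorization problem $G = $ (holomorphic loops)$\cdot K$-valued loops. I expect it to be the main obstacle, and I would solve it via Donaldson's theorem on the existence of Hermitian--Yang--Mills/flat reductions on the disk, or the Iwasawa-type loop group factorization. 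The solution is unique up to the $K$-action, so the moduli spaces satisfy
$$\mathcal M_1(P_G,L,\beta) \to \mathcal M_1(M,N,q_*\beta)$$
is a principal $K$-bundle (boundary evaluation is equivariant).

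Positivity then follows from item (3), since nonconstant lifts project to nonconstant disks or to disks lying in a fiber, and fiber disks with boundary on $K$ inside $G$ are constant because $\pi_2(G,K) = 0$ and $\mu = 0$ there. For regularity, I would use the splitting from item (3) on the linearized operator. The vertical $\bar\partial$-problem with totally real boundary condition $\mathfrak k \subset \mathfrak g$ and trivial bundle is surjective with kernel $\mathfrak k$. Hence the cokernel of $D_{\tilde u}$ equals the cokernel of $D_u$, which gives (2).
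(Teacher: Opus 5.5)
The paper gives no proof of this lemma; it is quoted from \cite{Kim26}, so there is no in-text argument to compare with. On its own terms your outline is sound and has no genuine gap. It uses the natural ingredients: the left-$K$-equivariant Cartan retraction of $G$ onto $K$ for (1), and the vertical/horizontal splitting for (3), with the vertical part trivialized by $\mathfrak{g}$ and boundary condition $\mathfrak{k}$. For (2) and (4) it uses the disk correspondence of Propositions~\ref{proposition_liftingholo} and~\ref{proposition_injective}. Several steps should nevertheless be stated differently.

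\emph{Item (1).} Drop the five-lemma ladder, because its vertical maps are not isomorphisms. The map $\pi_1(L)\to\pi_1(N)$ kills $\pi_1(K)$, and, as noted in Section~\ref{sec_boundaryliftinglemma}, $\pi_2(P_G)\to\pi_2(M)$ need not be onto. The argument that works is the retraction of $P_G=P_K\times_K G$ onto $P_K$ over $M$ fixing $P_K$. Combine it with $L=q|_{P_K}^{-1}(N)$ and the relative lifting property of the bundle $P_K\to M$.

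\emph{Item (2).} $D_{\tilde u}$ is not block diagonal with respect to a smooth horizontal complement, because such a complement is generally not holomorphic. What is true is that $V=\ker dq$ is a holomorphic subbundle, holomorphically trivialized by $\mathfrak{g}$. So, for the integrable $J$ here, $\bar\partial_{(\mathfrak{g},\mathfrak{k})}$, $D_{\tilde u}$ and $D_{q\circ\tilde u}$ form a morphism of short exact sequences of Banach spaces. The boundary conditions split along $H=V^{\perp}$, since $T_pL$ is Lagrangian and contains $\mathfrak{k}_p$, hence is orthogonal to $J\mathfrak{k}_p$. The snake lemma and surjectivity of $\bar\partial_{(\mathfrak{g},\mathfrak{k})}$ then give $\mathrm{coker}\,D_{\tilde u}\cong\mathrm{coker}\,D_{q\circ\tilde u}$ disk by disk. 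The class-level implication ``$\beta$ regular $\Rightarrow$ $q_*\beta$ regular'' additionally needs that every disk in $q_*\beta$ lifts; the lift lies in $\beta$ by (1).

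\emph{Item (4).} A disk lying in a fiber is constant because its class comes from $\pi_2(G,K)=0$. Its symplectic area, which equals its energy, therefore vanishes; the fact that $\mu=0$ plays no role. The direction ``$L$ positive $\Rightarrow$ $N$ positive'' again uses lifting. Its real content is factoring a loop in $G$ as the boundary value of a holomorphic map on $\mathbb{D}$ times a loop in $K$, together with boundary regularity of the trivialization of $u^*P_G$. The paper quotes this separately as Proposition~\ref{proposition_liftingholo}, so invoking it is legitimate.
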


Since the $K$-action on $P_G$ preserves the complex structure $J$, it induces a $K$-action on the moduli space of holomorphic disks in $P_G$ with boundary on $L$ in the class $\beta \in \pi_2(P_G, L) \colon$
$$
* \colon \mathcal{M}_1(P_{G}, L, J, \beta) \times K \to \mathcal{M}_1(P_{G}, L, J, \beta), \quad [(\varphi, z_0)] * g \mapsto  [(\varphi * g, z_0)].
$$
This action produces a principal $K$-bundle.

\begin{proposition}[Proposition 3.5 in \cite{Kim26}]\label{proposition_freemoduli}
The induced $K$-action on $\mathcal{M}_1(P_G, L, J, \beta)$ is free. In particular, if $\beta$ is regular, then the quotient $\mathcal{M}_1(P_G, L, J, \beta)/K$ is a smooth manifold and the quotient map
$$
q_* \colon \mathcal{M}_1(P_G, L, J, \beta) \to \mathcal{M}_1(P_G, L, J, \beta)/K
$$
is a principal $K$-bundle.
\end{proposition}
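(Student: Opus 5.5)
To prove Proposition~\ref{proposition_freemoduli}, I would first show that no element $g \neq e$ of $K$ fixes a point $[(\varphi, z_0)]$ of $\mathcal{M}_1(P_G, L, J, \beta)$. Suppose $[(\varphi * g, z_0)] = [(\varphi, z_0)]$. Then there is $\sigma \in \mathrm{Aut}(\mathbb{D}, z_0)$ with $\varphi * g = \varphi \circ \sigma$. Evaluating at the marked point gives $\varphi(z_0) * g = \varphi(z_0 )$, since $\sigma(z_0) = z_0$. The $K$-action on $P_K \supseteq L$ is free because $q|_{P_K}$ is a principal $K$-bundle (Setup~\ref{assumption_pgpmathbbg}(1)), and $\varphi(z_0) \in L \subseteq P_K$. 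Hence $g = e$. This is the key point, and it explains why the boundary marked point is used: on the unmarked moduli space a nontrivial $g$ could be absorbed by a rotation of the disk, but the marked point pins one boundary value to a free orbit. I should also check that the action is well defined on equivalence classes. This holds because right multiplication by $g$ is $J$-holomorphic (the $G$-action is holomorphic and $K \subseteq G$), preserves $L$ by $K$-invariance, commutes with reparametrization, and preserves the class $\beta$ since $K$ is connected.

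Next, I would treat the regular case. Regularity makes $\mathcal{M}_1(P_G, L, J, \beta)$ a smooth manifold; Lemma~\ref{lemma_factsfromKIM}(2) lets us pass regularity between $\beta$ and $q_*\beta$ if needed. The $K$-action is smooth, since it is induced by a smooth action on the space of maps. Because $K$ is compact, the action is automatically proper. A free, proper, smooth action of a compact Lie group yields a smooth quotient manifold, and the quotient map is a principal $K$-bundle by the quotient manifold theorem (slice theorem). This gives the second assertion.

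The main obstacle is the technical one of smoothness: one must verify that the $K$-action on the Fredholm-theoretic moduli space is smooth and compatible with its manifold structure coming from transversality. I would handle this by noting that $g$ acts on the Banach manifold of $W^{1,p}$ maps $(\mathbb{D},\partial\mathbb{D}) \to (P_G, L)$ by post-composition with a diffeomorphism. This action is smooth and preserves the $\bar\partial_J$-section equivariantly. Consequently the zero set (cut out transversally) inherits a smooth $K$-action, and the action descends smoothly through the free proper $\mathrm{Aut}(\mathbb{D}, z_0)$ quotient. If compactness of the moduli space is not assumed, I would rely only on compactness of $K$ for properness, so no compactness of $\mathcal{M}_1$ is needed.
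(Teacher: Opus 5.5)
Your argument is correct: evaluation at the boundary marked point is $K$-equivariant into $L \subseteq P_K$, where $K$ acts freely, so the action on $\mathcal{M}_1(P_G, L, J, \beta)$ is free, and in the regular case compactness of $K$ gives properness, so the quotient manifold theorem yields the principal $K$-bundle. The paper gives no proof of its own here and simply imports the statement from [Kim26]; your route is the natural argument for it, so there is nothing substantive to add.
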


On the other hand, the holomorphic bundle $q$ in~\eqref{equ_qbundleprojection} induces a smooth map
$$
\widehat{\phi} \colon  \mathcal{M}_1(P_{G}, L, J, \beta) \to \mathcal{M}_1(M, N, J_M, q_* \beta), \quad [(\varphi, z_0)] \mapsto  [(q \circ \varphi, z_0)]. 
$$
Since $P_{G}$ is the extension of $P_K$, the map $\widehat{\phi}$ is $K$-invariant. Hence it descends to a smooth map
\begin{equation}\label{equ_inducedmap}
\phi \colon  \mathcal{M}_1(P_{G},L,J,\beta)/K \to \mathcal{M}_1(M, N, J_M, q_* \beta)
\end{equation}
satisfying $ \widehat{\phi} = \phi \circ q_*$.

This map $\phi$ is indeed bijective, as follows from the next two propositions.

\begin{proposition}[Proposition 3.8 in \cite{Kim26}]\label{proposition_liftingholo}
Let $\varphi \colon (\mathbb{D},\partial \mathbb{D}) \to (M,N)$ be a $J_M$-holomorphic disk. Then $\varphi$ admits a holomorphic lift. Namely, there exists a $J$-holomorphic disk $\varphi^\sharp \colon (\mathbb{D},\partial \mathbb{D}) \to (P_{G},L)$ such that $q \circ \varphi^\sharp = \varphi$.

In particular, the map $\phi$ is surjective.
\end{proposition}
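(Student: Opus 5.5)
We need to prove: every $J_M$-holomorphic disk $\varphi$ with boundary on $N$ lifts to a $J$-holomorphic disk in $P_G$ with boundary on $L$.

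The plan has two stages: first find a lift with boundary on $L$, then make it holomorphic by a complex gauge transformation.

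\textbf{Step 1: a smooth lift with boundary on $L$.} Since $\mathbb{D}$ is contractible, the pullback $\varphi^*P_K$ is a trivial principal $K$-bundle over $\mathbb{D}$. Its restriction to $\partial\mathbb{D}$ contains the sub-bundle $(\varphi|_{\partial\mathbb{D}})^*L$, and because $L \to N$ is a principal $K$-bundle this sub-bundle equals all of $\varphi|_{\partial\mathbb{D}}^*P_K$. So any smooth section $s$ of $\varphi^*P_K$ gives a lift $\psi\colon \mathbb{D}\to P_K$ with $q\circ\psi=\varphi$ and $\psi(\partial\mathbb{D})\subseteq L$. Note that $L=q|_{P_K}^{-1}(N)$ is $K$-invariant and is the full preimage inside $P_K$.

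\textbf{Step 2: correcting to a holomorphic lift.} We look for $\varphi^\sharp=\psi\cdot g$ with $g\colon\mathbb{D}\to G$. The holomorphic structure on $\varphi^*P_G$ is the pullback of that on $P_G$, so $\varphi^*P_G$ is a holomorphic principal $G$-bundle over $\mathbb{D}$. We need a holomorphic section $\sigma$ whose boundary values lie in the $K$-reduction $\varphi^*P_K|_{\partial\mathbb{D}}$. Write $\sigma=\psi\cdot g$. Then holomorphicity becomes a $\bar\partial$-equation
\[
g^{-1}\bar\partial g + \mathrm{Ad}_{g^{-1}} A^{0,1} = 0,
\]
where $A^{0,1}=\psi^*(\text{the }(0,1)\text{ part of the failure})$ is a $\mathfrak{g}$-valued $(0,1)$-form. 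The boundary condition is $g|_{\partial\mathbb{D}}\in K$.

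Over a disk, every holomorphic $G$-bundle is holomorphically trivial, by Grauert--Oka on the Stein manifold $\mathbb{D}$, or directly by solving the $\bar\partial$-problem on a slightly larger disk. Choose a holomorphic frame $h$, so $\psi = h\cdot f$ with $f\colon\overline{\mathbb{D}}\to G$ smooth. We then need a holomorphic $u\colon\mathbb{D}\to G$ with $h\cdot u|_{\partial\mathbb{D}} \in \psi\cdot K$, i.e. $u(z)\in f(z)K$ on $\partial\mathbb{D}$.

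Using the polar decomposition $G = K\exp(i\mathfrak{k})$, write $f = k\exp(i\xi)$ on the boundary, so the condition becomes $u = f\cdot(\text{element of }K)$. This is a Riemann--Hilbert problem with totally real boundary condition: the loop $z\mapsto f(z)K \subseteq G/K$. Since $G/K$ is contractible, this loop bounds.

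\textbf{The main obstacle.} For abelian $G=(\C^*)^n$, the focus of the paper, the problem splits into $n$ scalar problems. We need holomorphic $u_j\colon\mathbb{D}\to\C^*$ with $|u_j| = |f_j|$ on $\partial\mathbb{D}$. Take $u_j=\exp(F_j)$, where $F_j$ is holomorphic with $\mathrm{Re}\,F_j=\log|f_j|$ on the boundary; $F_j$ is obtained by the harmonic extension plus its harmonic conjugate. This solves it explicitly.

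For general reductive $G$, I would invoke the Donaldson/Uhlenbeck--Yau type existence of a Hermitian--Yang--Mills-like solution of the Dirichlet problem on the disk: the reduction to $K$ on the boundary is extended to a harmonic map $\mathbb{D}\to G/K$, using the nonpositive curvature of $G/K$. Alternatively I would use the Kempf--Ness / Birkhoff factorization of loops in $G$. Regularity up to the boundary would follow from standard elliptic estimates. Finally, surjectivity of $\phi$ is immediate: $[(\varphi^\sharp,z_0)]$ maps to $[(\varphi,z_0)]$, and its class is $q_*^{-1}[\varphi]$ by Lemma~\ref{lemma_factsfromKIM}(1).
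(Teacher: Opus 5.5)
The paper gives no proof of this statement; it imports it from Kim26. I am therefore judging your argument on its own.

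\textbf{What is correct.} Everything up to the Riemann--Hilbert reformulation is sound.
\begin{enumerate}
\item Any lift into $P_K$ automatically has boundary on $L=q|_{P_K}^{-1}(N)$.
\item Extending $A^{0,1}$ to a slightly larger disk and trivializing there gives a holomorphic frame $h$ that is smooth up to $\partial\mathbb{D}$. Grauert--Oka on the open disk alone gives no boundary control, so this is the version you need.
\item The problem becomes: find a holomorphic $u\colon\mathbb{D}\to G$, smooth up to the boundary, with $u(z)\in f(z)K$ for $z\in\partial\mathbb{D}$.
\end{enumerate}
For $G=(\C^*)^n$, your solution $u_j=\exp(F_j)$ with $\mathrm{Re}\,F_j=\log|f_j|$ on $\partial\mathbb{D}$ is correct and complete. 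This is the only case the paper uses later, and surjectivity of $\phi$ then follows as you say.

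\textbf{The gap.} The statement covers general reductive $G$ under Setup~\ref{assumption_pgpmathbbg}, and there your sketch does not work as written.

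Extending the boundary reduction to a \emph{harmonic} map $H\colon\mathbb{D}\to G/K$ solves the wrong equation. A holomorphic frame with values in the $K$-reduction exists exactly when the Chern connection of $H$ on the trivialized bundle is \emph{flat}. Take $G=\mathrm{GL}_r(\C)$ and set $A=H^{-1}\partial_zH$, $B=H^{-1}\partial_{\bar z}H$. Then $\partial_{\bar z}A-\partial_zB=[A,B]$. Harmonicity means $\partial_{\bar z}A+\partial_zB=0$, while flatness means $\partial_{\bar z}A=0$. These coincide only when $[A,B]=0$, for example in the abelian case.

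Concretely, take $u(z)=\bigl(\begin{smallmatrix}1&z\\0&1\end{smallmatrix}\bigr)$. The metric $H=u^*u$ is flat, with $A=\bigl(\begin{smallmatrix}0&1\\0&0\end{smallmatrix}\bigr)$ and $[A,B]=\bigl(\begin{smallmatrix}1&2z\\0&-1\end{smallmatrix}\bigr)\neq 0$, so $H$ is not harmonic. Flat extensions of boundary data are unique: a holomorphic $\mathbb{D}\to\mathrm{GL}_r(\C)$ that is unitary on $\partial\mathbb{D}$ is a constant unitary. Hence the harmonic extension of $H|_{\partial\mathbb{D}}$ is a non-flat metric and gives no holomorphic lift.

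The other tools you name do not help either:
\begin{itemize}
\item Contractibility of $G/K$ carries no holomorphic information.
\item Birkhoff's factorization $L^-G\cdot L^+G$ holds only on an open cell.
\item Kempf--Ness is not relevant to this boundary problem.
\end{itemize}

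\textbf{How to close it.} Use the Iwasawa-type factorization of smooth loops, $LG=\Omega K\cdot L^+G$ for $K$ compact connected (Pressley--Segal, Theorem~8.1.1). Apply it to $f^{-1}|_{\partial\mathbb{D}}$ and invert. This writes $f|_{\partial\mathbb{D}}=u\,k$, where $u$ is the boundary value of a holomorphic map $\mathbb{D}\to G$ smooth up to the boundary and $k$ is a loop in $K$. Then $\varphi^\sharp\coloneqq h\cdot u$ is holomorphic, and on $\partial\mathbb{D}$ it equals $\psi\cdot k^{-1}\in L$.

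Equivalently, you can use Donaldson's Dirichlet problem for Hermitian--Yang--Mills metrics. On a disk this produces the flat extension, not the harmonic one.
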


\begin{proposition}[Lemma 3.12 in \cite{Kim26}]\label{proposition_injective}
The map $\phi$ is injective.
\end{proposition}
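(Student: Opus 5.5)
Suppose two elements $[(\varphi_1,z_1)]$ and $[(\varphi_2,z_2)]$ of $\mathcal{M}_1(P_G,L,J,\beta)$ have the same image under $\phi$. The goal is to show that their $K$-orbits coincide. After reparametrizing by $\mathrm{Aut}(\mathbb{D},z_0)$ we may assume $z_1=z_2=z_0$ and $q\circ\varphi_1=q\circ\varphi_2=:\varphi$. Both $\varphi_1$ and $\varphi_2$ then lie in the pullback $\varphi^*P_G\to\mathbb{D}$, which is a holomorphic principal $G$-bundle over the disk. Hence there is a unique map $g\colon\mathbb{D}\to G$ with $\varphi_2=\varphi_1\cdot g$, and $g$ is holomorphic on the interior and continuous up to the boundary. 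This holds because the $G$-action is holomorphic and free, and local holomorphic sections give holomorphic transition data.

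Next I would use the boundary condition. On $\partial\mathbb{D}$, both $\varphi_1$ and $\varphi_2$ take values in $L\subseteq P_K$, which is $K$-invariant. By Setup~\ref{assumption_pgpmathbbg}(2), $P_G\simeq P_K\times_K G$, so two points of $P_K$ in the same $G$-fiber differ by an element of $K$. Therefore $g(\partial\mathbb{D})\subseteq K$.

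The key step is a maximum-principle argument showing that a holomorphic map $g\colon(\mathbb{D},\partial\mathbb{D})\to(G,K)$ is constant. The cleanest route is to use the Cartan decomposition $G=K\exp(i\mathfrak{k})$ and the function $\rho(x)=|\log$-part$|^2$, i.e.\ the $K$-biinvariant strictly plurisubharmonic exhaustion $\rho(k\exp(i\xi))=|\xi|^2$ on $G$. This $\rho$ is a Kähler potential whose zero set is exactly $K$. The composition $\rho\circ g$ is subharmonic on $\mathbb{D}$ and vanishes on $\partial\mathbb{D}$, while $\rho\geq 0$; the maximum principle forces $\rho\circ g\equiv 0$, so $g(\mathbb{D})\subseteq K$. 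A holomorphic map into the totally real submanifold $K$ has $dg\circ j_0=J\,dg$ with both $dg$ and $J\,dg$ tangent to $K$, so $dg=0$, and $g\equiv k$ is constant. In the case of interest, $G=(\C^*)^n$ and $K=(S^1)^n$, this is elementary: each component $\log|g_j|$ is harmonic and vanishes on $\partial\mathbb{D}$, so $|g_j|\equiv1$ and the open mapping theorem gives that $g_j$ is constant.

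It follows that $\varphi_2=\varphi_1*k$ with $k\in K$, so the two disks define the same point of $\mathcal{M}_1(P_G,L,J,\beta)/K$, and $\phi$ is injective. The main obstacle is the regularity of $g$ up to the boundary, which is needed to apply the maximum principle. I would handle it by local holomorphic trivializations of $\varphi^*P_G$ near boundary points, where $g$ is a quotient of smooth maps that are holomorphic in the interior. Subharmonicity of $\rho\circ g$ requires only the plurisubharmonicity of $\rho$; in general this is the standard fact that $G/K$ is a symmetric space of nonpositive curvature, and for tori it is immediate.
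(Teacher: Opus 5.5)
Your argument is correct. This paper does not prove the statement itself; it quotes it from \cite{Kim26} without proof, so there is no in-text argument to compare yours against.

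On its own terms, your reduction is complete. The difference map $g\colon\overline{\mathbb{D}}\to G$ is holomorphic in the interior. It satisfies $g(\partial\mathbb{D})\subseteq K$ because $P_K\to M$ is a principal $K$-bundle and the $G$-action is free. Rigidity of holomorphic disks in $(G,K)$ then finishes the proof. Your elementary argument with the harmonic functions $\log|g_j|$ fully settles the case $G=(\C^*)^n$, $K=(S^1)^n$, which is the only case the paper uses afterwards.

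Three minor remarks:
\begin{itemize}
\item The reparametrization should use an element of $\mathrm{Aut}(\mathbb{D})$ that carries one marked point to the other and intertwines the two base disks, not an element of $\mathrm{Aut}(\mathbb{D},z_0)$.
\item Boundary regularity, which you flag as the main obstacle, is not really one. The map $g$ is the composition of $(\varphi_1,\varphi_2)$ with the holomorphic division map $P_G\times_M P_G\to G$, $(p,pg)\mapsto g$, so it is as regular on $\overline{\mathbb{D}}$ as the disks themselves.
\item For nonabelian $G$, the plurisubharmonicity of $\rho(k\exp(i\xi))=|\xi|^2$ is a genuine theorem (Azad--Loeb), not a formal consequence of the nonpositive curvature of $G/K$. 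If you want the general case self-contained, embed $G\subseteq\mathrm{GL}(V)$ with $K\subseteq U(V)$. Apply the maximum principle to the subharmonic functions $\log\|g\|$ and $\log\|g^{-1}\|$; both vanish on $\partial\mathbb{D}$, which forces $g(\mathbb{D})\subseteq U(V)\cap G=K$. Alternatively, reflect $g$ across $\partial\mathbb{D}$ using the antiholomorphic involution $h\mapsto (h^*)^{-1}$, whose fixed locus in $G$ is $K$. This gives a holomorphic map $\CP^1\to G$, which is constant because $G$ is affine.
\end{itemize}
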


We now obtain the holomorphic disk correspondence under Setup~\ref{assumption_pgpmathbbg}.

\begin{theorem}[Theorem 3.13 and Corollary 3.14 in \cite{Kim26}] \label{theorem_fundacycle}
Suppose that the quotient space $N = L / K$ is spin, $\beta \in \pi_2(P_{G}, L)$ is regular, $\mathcal{M}_1(P_G, L, J, \beta)$ is compact. If $L$ is equipped with the product spin structure, then the map
$$
\phi \colon \mathcal{M}_1(P_{G},L,J,\beta)/K \to \mathcal{M}_1(M, N, J_M, q_* \beta), \quad [(\varphi, z_0)] \mapsto  [(q \circ \varphi, z_0)] 
$$
is an orientation-preserving homeomorphism. 

In particular, the induced homomorphism $\phi_*$ maps the fundamental class of $\mathcal{M}_1(P_{G},L,J,\beta)/K$ to that of $\mathcal{M}_1(M, N, J_M, q_* \beta)$. Moreover, the disk counting invariants of $\beta$ and $q_* \beta$ coincide$\colon$
$$
n_\beta = n_{q_* \beta}.
$$
\end{theorem}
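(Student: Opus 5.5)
We would establish the statement in three steps: first that $\phi$ is a homeomorphism, then that it preserves orientation, and finally that the degrees agree, which gives $n_\beta = n_{q_*\beta}$.

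\emph{Homeomorphism.} By Propositions~\ref{proposition_liftingholo} and~\ref{proposition_injective}, $\phi$ is a continuous bijection. Because $\beta$ is regular and $\mathcal{M}_1(P_G,L,J,\beta)$ is compact, Proposition~\ref{proposition_freemoduli} shows that $\mathcal{M}_1(P_G,L,J,\beta)/K$ is a compact smooth manifold. By Lemma~\ref{lemma_factsfromKIM}(2), $q_*\beta$ is also regular, so the target is a Hausdorff manifold. A continuous bijection from a compact space to a Hausdorff space is a homeomorphism. For completeness we would also check that the dimensions agree. Lemma~\ref{lemma_factsfromKIM}(3) gives equal Maslov indices, and
$$\dim \mathcal{M}_1(P_G,L,J,\beta) = \dim L + \mu(\beta) - 2 = \dim N + \dim K + \mu(\beta)-2.$$
After quotienting by the free $K$-action this becomes $\dim N + \mu(q_*\beta) - 2$, which is the dimension of the target.

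\emph{Orientation.} This is the main obstacle. The orientations of the moduli spaces come from the spin structures on $L$ and $N$ through the linearized Cauchy--Riemann operators with totally real boundary conditions.

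1. Along a disk $\varphi$ with lift $\varphi^\sharp$, we would split the pullback pair as
$$(\varphi^{\sharp *}TP_G,\ \varphi^{\sharp *}TL) \cong (\varphi^*TM \oplus \mathfrak{g},\ \varphi^*TN \oplus \mathfrak{k}).$$
Here the vertical part is the trivial bundle $\mathfrak{g} = \mathfrak{k}\otimes\C$ with boundary condition $\mathfrak{k}$, obtained from the $G$-action and the horizontal distribution coming from $P_K$ and the Kähler condition $q^*\omega_M = \omega|_{P_K}$.

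2. The splitting need not be holomorphic. We would deform the operator through Fredholm operators to the direct sum $D_\varphi \oplus \bar\partial_{\mathfrak{k}}$.

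3. The trivial summand $\bar\partial_{\mathfrak{k}}$ has kernel equal to the constants $\mathfrak{k}$ and vanishing cokernel. Its orientation is the one induced by the trivial (product) spin structure on $\mathfrak{k}$, and this constant kernel is exactly the tangent space of the $K$-orbit in the moduli space.

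4. Hence the orientation of $\mathcal{M}_1(P_G,L,J,\beta)$ is the fiber-product orientation: base orientation followed by the $K$-orbit orientation.

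5. We then need to track the sign conventions for quotienting by $K$ and for the marked point, and verify that the orientation so induced on the quotient agrees with the pullback of the orientation of $\mathcal{M}_1(M,N,J_M,q_*\beta)$ under $\phi$. Consistent conventions, putting the $K$ factor last in both $L \simeq N\times K$ locally and the moduli space, should make the signs cancel.

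\emph{Degrees.} The evaluation maps fit into a commutative square with $\mathrm{ev}_0$ upstairs, $\mathrm{ev}_0$ downstairs, and $q|_L$, $q_*$, $\phi$ on the sides. The upstairs evaluation $\mathrm{ev}_0$ is $K$-equivariant and covers the evaluation map on the base through $\phi$. For a regular value $p \in N$ and a point $\tilde p$ over $p$, we would check the following.

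1. The preimage $\mathrm{ev}_0^{-1}(\tilde p)$ maps bijectively to $\mathrm{ev}_0^{-1}(p)$. Indeed, each base disk through $p$ has a $K$-orbit of lifts, exactly one of which has its marked point at $\tilde p$, because $K$ acts freely and transitively on the fiber.

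2. $\tilde p$ is a regular value, since the differential splits as the base differential plus the identity on $\mathfrak{k}$.

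3. The local signs agree by the orientation compatibility established above.

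Summing the signs over the preimages gives $n_\beta = \deg \mathrm{ev}_0 = n_{q_*\beta}$.
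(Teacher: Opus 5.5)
Note first that the paper does not prove this statement. It is quoted from \cite{Kim26}, and the nearest in-paper argument, the proof of Proposition~\ref{prop_liftingmoduli}, gets the homeomorphism from invariance of domain. Your compact-to-Hausdorff argument works equally well here, because compactness of $\mathcal{M}_1(P_G,L,J,\beta)$ is a hypothesis rather than a conclusion.

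\textbf{Main gap: orientation.} The orientation claim is the real content of the theorem and the only source of the sign in $n_\beta=n_{q_*\beta}$. Your step~5 only says that consistent conventions ``should make the signs cancel,'' and consistency alone cannot settle this. Compare orienting $\mathcal{M}_1=(\widetilde{\mathcal{M}}\times\partial\mathbb{D})/\mathrm{Aut}(\mathbb{D})$ with orienting $(\partial\mathbb{D}\times\widetilde{\mathcal{M}})/\mathrm{Aut}(\mathbb{D})$. The switch multiplies $n_\beta$ by $(-1)^{\dim L}$ and $n_{q_*\beta}$ by $(-1)^{\dim N}$, so it changes their ratio by $(-1)^{\dim K}$. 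Thus, when $\dim K$ is odd, the identity can hold in one uniformly applied convention and fail in another. You must fix one convention and actually compute, which requires three checks:
\begin{enumerate}
\item Show that the spin orientation of $\det(D_\varphi\oplus\bar\partial_{\mathfrak{k}})$ for the product spin structure equals $\det D_\varphi\otimes\Lambda^{\mathrm{top}}\mathfrak{k}$ in the order you use, e.g.\ by splitting off a sphere bubble and tracking Koszul signs.
\item Carry this through the boundary marked point and the $\mathrm{Aut}(\mathbb{D})$-quotient. Here $\mathfrak{k}$ must be commuted past $T\partial\mathbb{D}$, and possibly past the three-dimensional $\mathfrak{aut}(\mathbb{D})$.
\item Compare the result with the orientations chosen on $\mathcal{M}_1(P_G,L,J,\beta)/K$ and on $L$.
\end{enumerate}
Your step~4 asserts the outcome of the second check for $\mathcal{M}_1$. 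However, steps~1--3 only analyse $\det D_{\varphi^\sharp}$, which orients the parametrized space $\widetilde{\mathcal{M}}$, not $\mathcal{M}_1$.

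\textbf{Smaller gap: the regular-value step.} You deduce that $\tilde p$ is a regular value because ``the differential splits as the base differential plus the identity on $\mathfrak{k}$.'' This presupposes that $d\widehat{\phi}$ maps $T\mathcal{M}_1(P_G,L,J,\beta)$ onto $T\mathcal{M}_1(M,N,J_M,q_*\beta)$, i.e.\ that $\phi$ is a local diffeomorphism. A smooth homeomorphism need not be one. Nor does the Fredholm homotopy in your step~2 supply it, since a homotopy identifies determinant lines, not kernels. What you need is the kernel-level statement, which follows from your own setup:
\begin{itemize}
\item $q$ is holomorphic, so $dq$ intertwines $D_{\varphi^\sharp}$ with $D_\varphi$.
\item The vertical bundle is a holomorphic subbundle trivialized by $\mathfrak{g}$ with boundary values in $\mathfrak{k}$, so $0\to\mathfrak{k}\to\ker D_{\varphi^\sharp}\to\ker D_\varphi$ is exact.
\item Both operators are surjective and their indices differ by $\dim K$, so the last map is onto.
\end{itemize}
Alternatively, you can avoid regular values. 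The map $\mathrm{ev}_0$ is a $K$-equivariant map of principal $K$-bundles covering $\mathrm{ev}_0\circ\phi$. Once fibre orientations are matched, its degree equals that of $\mathrm{ev}_0\circ\phi$. This uses only that $\phi$ is an orientation-preserving homeomorphism, which matches the fundamental-class formulation of the statement.
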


Using this correspondence, we study holomorphic disks in the total space via those in the base. Let $\beta \in \pi_2(M,N)$. Since the homomorphism $q_*$ is an isomorphism by Lemma~\ref{lemma_factsfromKIM}, we define the lift of $\beta$ by 
\begin{equation}\label{equ_liftedclass}
\beta^\sharp \coloneqq q_*^{-1} (\beta) \in \pi_2(P_G, L).
\end{equation}
We next establish the compactness of the moduli space of holomorphic disks representing $\beta^\sharp$. 

\begin{proposition}\label{prop_liftingmoduli}
Suppose that $\beta \in \pi_2(M,N)$ is regular. If the moduli space $\mathcal{M}_1(M, N, J_M, \beta)$ is compact, then the moduli space $\mathcal{M}_1(P_G,L,J,\beta^\sharp)$ is also compact. In particular, 
\begin{equation}\label{equ_countinginvarcoin}
n_{\beta} = n_{\beta^\sharp}.
\end{equation}
\end{proposition}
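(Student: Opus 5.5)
We need to show two things for $\mathcal{M}_1(P_G,L,J,\beta^\sharp)$: every stable map in $\beta^\sharp$ is an honest disk, i.e.\ condition~\eqref{equ_mmst}, and the space of such disks is compact, i.e.\ condition~\eqref{equ_mmcpt}. Then $n_\beta = n_{\beta^\sharp}$ will follow from Theorem~\ref{theorem_fundacycle}.

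For~\eqref{equ_mmst}, take a stable map $u$ in $\beta^\sharp$ with boundary on $L$. Its composite $q\circ u$ is a stable map in $q_*\beta^\sharp=\beta$, since $q$ is holomorphic and the components are compatible with $q_*$. By hypothesis $q\circ u$ is a single disk with no sphere or disk bubbles. A sphere component of $u$ would map to a point under $q$, because otherwise it would be a sphere bubble of $q\circ u$. It would therefore lie in a fiber $q^{-1}(x)\simeq G$. Fibers are affine since $G$ is reductive, so they contain no nonconstant holomorphic spheres, and a constant component would be unstable. A disk component of $u$ that is constant under $q$ has boundary in $L\cap q^{-1}(x)\simeq K$, a totally real orbit in $G\simeq K_{\C}$. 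There it is a holomorphic disk in $G$ with boundary on $K$. Writing $G=K\exp(i\mathfrak{k})$, the function $g\mapsto |\log|$ of the $\exp(i\mathfrak{k})$-part is plurisubharmonic and vanishes on $K$, so by the maximum principle the disk is constant, contradicting stability. Hence $u$ has a single disk component, which gives~\eqref{equ_mmst}.

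For~\eqref{equ_mmcpt}, the main obstacle is that $P_G$ is noncompact, so a sequence of disks could escape along the fibers. I would use the identification from Proposition~\ref{proposition_freemoduli}, Proposition~\ref{proposition_liftingholo}, Proposition~\ref{proposition_injective} and the bijectivity of $\phi$: the quotient $\mathcal{M}_1(P_G,L,J,\beta^\sharp)/K$ maps bijectively and continuously onto $\mathcal{M}_1(M,N,J_M,\beta)$, which is compact. This gives a continuous bijection $\phi$ onto a compact Hausdorff space. What remains is to show that $\phi^{-1}$ is continuous. Since $K$ is compact, this would make $\mathcal{M}_1(P_G,L,J,\beta^\sharp)$ a principal $K$-bundle over a compact space, hence compact.

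For continuity of $\phi^{-1}$, take disks $\varphi_n\to\varphi$ in the base and the unique lifts $\varphi_n^\sharp$ normalized by a fixed value at the marked point. I would control these lifts uniformly as follows. Trivialize $P_G$ over the images, which converge in $C^\infty$, and write $\varphi_n^\sharp = s_n\cdot g_n$ with $g_n\colon\mathbb{D}\to G$ and $g_n(\partial\mathbb{D})\subseteq K$-twisted data. The lifting construction in \cite{Kim26} solves a Riemann--Hilbert/$\bar\partial$-type problem for $g_n$, and this problem depends continuously on the data. Alternatively, one can use the plurisubharmonic function above to bound the $\exp(i\mathfrak{k})$-part of $g_n$ by its boundary values, which are bounded because they lie in $K$. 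Either route gives a $C^0$ bound showing that the images of the $\varphi_n^\sharp$ stay in a compact subset of $P_G$. Gromov compactness in that compact region, together with~\eqref{equ_mmst} and uniqueness of lifts, then gives $\varphi_n^\sharp\to\varphi^\sharp$. This establishes compactness. Combined with Lemma~\ref{lemma_factsfromKIM} for regularity of $\beta^\sharp$, Theorem~\ref{theorem_fundacycle} yields $n_\beta=n_{\beta^\sharp}$.
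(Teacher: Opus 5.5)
The gap is in the step you yourself call the main obstacle: continuity of $\phi^{-1}$. Before that, your overall architecture matches the paper's. You reduce to showing that $\phi\colon \mathcal{M}_1(P_G,L,J,\beta^\sharp)/K \to \mathcal{M}_1(M,N,J_M,\beta)$ is a homeomorphism, transfer compactness using compactness of $K$, and conclude with Theorem~\ref{theorem_fundacycle}. Your fiberwise argument for~\eqref{equ_mmst} is correct:
\begin{itemize}
\item the fiber $q^{-1}(x)\simeq G$ is affine, so it contains no nonconstant holomorphic spheres;
\item a holomorphic disk in $G$ with boundary on $K$ is constant by the maximum principle;
\item the remaining constant components are unstable.
\end{itemize}
This is a genuine addition. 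The paper's proof of this proposition only establishes topological compactness of the disk moduli space. It obtains the no-bubbling condition for lifted classes separately, in Proposition~\ref{proposition_maina}, and only under $\pi_2(P_G)=0$ and positivity.

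Neither of your two routes to the uniform $C^0$ bound is actually established. The first rests on an unverified claim that the lifting problem in \cite{Kim26} depends continuously on the data. The second does not work as written.
\begin{itemize}
\item If $s_n$ is a smooth section with $s_n(\partial\mathbb{D})\subseteq L$, so that $g_n|_{\partial\mathbb{D}}$ lies in $K$, then $g_n$ is not holomorphic. Your plurisubharmonic function composed with $g_n$ need not be subharmonic, so the maximum principle gives nothing.
\item If $s_n$ is a holomorphic trivialization, then $g_n$ is holomorphic, but the boundary condition becomes $g_n(z)\in h_n(z)K$ with $z$-dependent $h_n$. The composite is then no longer controlled by its boundary values.
\end{itemize}
For abelian $G$ you could rescue the second route via harmonicity of $\log|g_{n,j}|$. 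Even then you would need holomorphic trivializations that converge in $n$, and for general reductive $G$, which is the setting of this proposition, the argument is unclear.

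More to the point, no analysis is needed, and you already cite every ingredient. Since $\beta^\sharp$ is regular by Lemma~\ref{lemma_factsfromKIM}, Proposition~\ref{proposition_freemoduli} makes the quotient a smooth manifold. Its dimension is $\dim L + \mu - 2 - \dim K = \dim N + \mu - 2 = \dim \mathcal{M}_1(M,N,J_M,\beta)$, where $\mu$ is the common Maslov index of $\beta$ and $\beta^\sharp$. A continuous injection between manifolds of the same dimension is open by invariance of domain. So the smooth bijection $\phi$ is a homeomorphism, and compactness of the base moduli space passes to $\mathcal{M}_1(P_G,L,J,\beta^\sharp)/K$ and hence to $\mathcal{M}_1(P_G,L,J,\beta^\sharp)$. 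This is exactly the paper's argument, and it replaces your entire $C^0$-estimate and Gromov-compactness paragraph.
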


\begin{proof} 
By Lemma~\ref{lemma_factsfromKIM}, the lifted class $\beta^\sharp$ is regular. By Proposition~\ref{proposition_freemoduli}, $\mathcal{M}_1(P_{G},L,J,\beta^\sharp) / K$ is a smooth manifold whose dimension is equal to $\dim \mathcal{M}_1(M, N, J_M, \beta)$. Moreover, by Propositions~\ref{proposition_liftingholo} and~\ref{proposition_injective}, the induced map $\phi$ in~\eqref{equ_inducedmap} is a smooth bijection. Since the domain and codomain have the same dimension, by invariance of domain, $\phi$ is an open map. Therefore, $\phi$ is a homeomorphism. Since $K$ is compact and $\mathcal{M}_1(M, N, J_M, \beta)$ is compact by assumption, it follows that $\mathcal{M}_1(P_G, L, J, \beta^\sharp)/K$ and $\mathcal{M}_1(P_G, L, J, \beta^\sharp)$ are compact. The equality~\eqref{equ_countinginvarcoin} then follows from Theorem~\ref{theorem_fundacycle}.
\end{proof}

Next, we show that only holomorphic disks of Maslov index two (without bubbling) contribute  to the disk potential under certain positivity and topological assumptions. Hence, it suffices to consider lifted holomorphic disks to compute the disk potential of $L$ in $P_G$. 

\begin{proposition}\label{proposition_maina}
Suppose that $\pi_2(P_G) \simeq \{0\}$. Let $\beta^\sharp \in \pi_2(P_G,L)$ be a class of Maslov index two. If $L$ is positive, then every stable map from a bordered Riemann surface of genus zero with one boundary marked point in $\beta^\sharp$ is a holomorphic disk, that is, 
$$
\mathcal{M}_1(P_G, L, J, \beta^\sharp) = \mathcal{M}^{st}_1(P_G, L, J, \beta^\sharp).
$$
\end{proposition}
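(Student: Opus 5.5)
The plan is to analyze an arbitrary stable map $u$ in $\mathcal{M}^{st}_1(P_G, L, J, \beta^\sharp)$ through its decomposition into irreducible components, and to show that any configuration with more than one nonconstant component, or with a sphere component, is incompatible with the Maslov index being two. Concretely, $u$ consists of finitely many disk components $u_1, \dots, u_k$ with boundary on $L$ and sphere components $v_1, \dots, v_l$ in $P_G$, glued along a genus zero tree, and the class decomposes as
\begin{equation*}
\beta^\sharp = \sum_{i=1}^k [u_i] + \sum_{j=1}^l [v_j] \in \pi_2(P_G, L),
\end{equation*}
where $[v_j]$ is understood via the image of $\pi_2(P_G) \to \pi_2(P_G, L)$. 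Maslov index is additive, and a sphere class contributes $2c_1(TP_G)([v_j])$.

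The first step uses the hypothesis $\pi_2(P_G) \simeq \{0\}$. Every sphere component $v_j$ represents the trivial class in $\pi_2(P_G)$, so its symplectic area $\int v_j^*\omega$ vanishes. Since $v_j$ is $J$-holomorphic and $\omega$ is K\"{a}hler (hence $J$ is compatible), the energy equals the area, so $v_j$ is constant. The stability condition then forbids constant sphere components carrying too few special points, so after discarding them there are no sphere components; the remaining tree consists only of disk components. Note also that this shows sphere bubbles contribute nothing to Maslov index, so the Fano condition on $P_G$ is vacuous here.

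The second step handles the disk components. Each nonconstant $u_i$ has Maslov index at least two by positivity of $L$, while constant disk components have Maslov index zero and must be unstable unless they carry enough special points; since there is only one marked point and the tree contains only disks, a constant disk component would need at least three boundary nodes/markings and I would argue it can be removed by stability. Additivity gives $2 = \mathrm{MI}_L(\beta^\sharp) = \sum_i \mathrm{MI}_L([u_i]) \ge 2 \cdot \#\{\text{nonconstant } u_i\}$, so exactly one component is nonconstant. Combined with stability (a constant disk component attached to a single nonconstant disk with one marked point has at most two special points, hence is unstable), the domain is a single disk, i.e. $u$ is an honest holomorphic disk. The main point to be careful about is the stability bookkeeping for constant components, which is routine; the substantive input is the vanishing-area argument for spheres, which is where $\pi_2(P_G) = 0$ is needed, since $P_G$ is noncompact and one cannot appeal to a Fano condition in the usual sense.
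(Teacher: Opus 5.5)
Your proposal is correct and follows the same route as the paper: $\pi_2(P_G)\simeq\{0\}$ rules out sphere components, and positivity of $L$ together with additivity of the Maslov index leaves exactly one nonconstant disk component. Your version also supplies details the paper leaves implicit. Zero symplectic area forces every sphere component to be constant. A leaf-of-the-tree stability argument then eliminates the constant sphere and constant disk components.
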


\begin{proof}
Let $\varphi \colon (\Sigma, \partial \Sigma) \to (P_G, L)$ be a stable map in the class $\beta^\sharp$. Since $\pi_2(P_G) \simeq \{0\}$, the domain $\Sigma$ contains no sphere components. Moreover, since $L$ is positive, every nonconstant disk component has Maslov index at least two. Since $\beta^\sharp$ has Maslov index two, there is exactly one nonconstant disk component and thus every stable map in $\beta^\sharp$ is a holomorphic disk.
\end{proof}

\begin{corollary}\label{cor_diskposreg}
Suppose that every class $\beta \in \pi_2(M,N)$ of Maslov index two is regular and $\mathcal{M}_1(M,N,J_M,\beta)$ is compact. If $N$ is positive and $\pi_2(P_G) \simeq \{0\}$, then the disk potential of $L$ is given by
$$
W_L (\mathbf{y}) = \sum_{\beta \in \pi_2(M,N)} n_\beta \, \mathbf{y}^{\partial \beta^\sharp}.
$$
\end{corollary}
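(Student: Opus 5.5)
The strategy is to assemble Corollary~\ref{cor_diskposreg} from the earlier results in three steps. First we check that the disk potential $W_L$ is well defined. Then we identify which classes in $\pi_2(P_G,L)$ can contribute. Finally we match each contributing class with a class downstairs.

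\emph{Step 1: Maslov index two classes downstairs and upstairs correspond.} By Lemma~\ref{lemma_factsfromKIM}(1), $q_*\colon \pi_2(P_G,L)\to\pi_2(M,N)$ is an isomorphism. So every class in $\pi_2(P_G,L)$ is uniquely of the form $\beta^\sharp$ for some $\beta\in\pi_2(M,N)$, as in~\eqref{equ_liftedclass}. By Lemma~\ref{lemma_factsfromKIM}(3), $\beta^\sharp$ has Maslov index two exactly when $\beta$ does. Hence the Maslov index two classes of $L$ are precisely the lifts of the Maslov index two classes of $N$.

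\emph{Step 2: hypotheses for the disk potential of $L$.} Fix a Maslov index two class $\beta^\sharp$.
\begin{enumerate}
\item Since $\beta$ is regular, Lemma~\ref{lemma_factsfromKIM}(2) shows that $\beta^\sharp$ is regular.
\item Since $\mathcal{M}_1(M,N,J_M,\beta)$ is compact, Proposition~\ref{prop_liftingmoduli} shows that $\mathcal{M}_1(P_G,L,J,\beta^\sharp)$ is compact as a space of holomorphic disks, which is condition~\eqref{equ_mmcpt}.
\item Condition~\eqref{equ_mmst} needs a separate argument, since Proposition~\ref{prop_liftingmoduli} only handles honest disks. Here we use that $N$ is positive, so $L$ is positive by Lemma~\ref{lemma_factsfromKIM}(4). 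Together with $\pi_2(P_G)\simeq\{0\}$, Proposition~\ref{proposition_maina} gives $\mathcal{M}_1=\mathcal{M}_1^{st}$ for $\beta^\sharp$.
\end{enumerate}
Thus every Maslov index two class of $L$ is regular with compact moduli space in the sense required. So $W_L$ is defined and counts only genuine holomorphic disks.

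\emph{Step 3: matching the counts.} Proposition~\ref{prop_liftingmoduli} gives $n_{\beta^\sharp}=n_\beta$. For this, the orientations and spin structures are those of Theorem~\ref{theorem_fundacycle}, with $L$ carrying the product spin structure over the spin manifold $N$. Classes of Maslov index different from two have $\dim\mathcal{M}_1\neq\dim L$, so $n=0$ for them on both sides. Reindexing the sum~\eqref{equ_potentialWL} over $\pi_2(P_G,L)$ by $\beta\in\pi_2(M,N)$ via the bijection $\beta\mapsto\beta^\sharp$ gives
$$
W_L(\mathbf{y})=\sum_{\beta^\sharp}n_{\beta^\sharp}\mathbf{y}^{\partial\beta^\sharp}=\sum_{\beta\in\pi_2(M,N)}n_\beta\,\mathbf{y}^{\partial\beta^\sharp}.
$$

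The only delicate point is condition~\eqref{equ_mmst} in Step 2. Sphere bubbles are excluded by $\pi_2(P_G)=0$, and disk bubbles by the positivity transferred from $N$ through Lemma~\ref{lemma_factsfromKIM}(4). Everything else is bookkeeping. Note that the exponent $\partial\beta^\sharp$ remains in $\pi_1(L)$; determining it is deferred to the boundary lifting lemma.
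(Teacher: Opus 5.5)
Your proposal is correct and follows essentially the paper's own argument. You use Lemma~\ref{lemma_factsfromKIM} to transfer the Maslov index, regularity and positivity to $\beta^\sharp$, Proposition~\ref{proposition_maina} to rule out bubbling, and Proposition~\ref{prop_liftingmoduli} to get $n_\beta = n_{\beta^\sharp}$. Your explicit reindexing via the isomorphism $q_*$, and your remark that classes of Maslov index other than two contribute zero, simply spell out steps the paper leaves implicit.
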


\begin{proof}
Let $\beta \in \pi_2(M,N)$ be a homotopy class of Maslov index two. By Lemma~\ref{lemma_factsfromKIM}, the lifted class $\beta^\sharp$ is also of Maslov index two and is regular, and $L$ is positive. By Proposition~\ref{proposition_maina}, 
$$
\mathcal{M}_1(P_G, L, J, \beta^\sharp) = \mathcal{M}^{st}_1(P_G, L, J, \beta^\sharp).
$$ 
Thus, only holomorphic disks contribute to $W_L$. Since $q_* (\beta^\sharp) = \beta$ and $\mathcal{M}_1(M, N, J_M, \beta)$ is compact by assumption, by Proposition~\ref{prop_liftingmoduli}, we have $n_\beta = n_{\beta^\sharp}$, which proves the formula.
\end{proof}

\begin{remark}
The relationship between Maurer--Cartan spaces and disk potentials on a symplectic manifold equipped with a Hamiltonian $K$-action and those on its symplectic reduction was explored in Lau--Leung--Li \cite{LLL23} via equivariant Lagrangian correspondences. The above relation is expected to be derived by showing that the obstruction in Lagrangian correspondence vanishes.
\end{remark}

\section{Boundary lifting lemma}\label{sec_boundaryliftinglemma}

In this section, we establish a boundary lifting lemma that determines the boundary classes of the lifted holomorphic disks in Proposition~\ref{proposition_liftingholo}. This lemma will play a key role in computing the disk potential from the lifted holomorphic disks.

Throughout this section, we assume Setup~\ref{assumption_pgpmathbbg} and the following additional hypotheses$\colon$
\begin{enumerate}
\item $L \simeq T^{m+n}$, $K \simeq T^n$, and $N \simeq T^{m}$ are tori and
\item the base manifold $M$ is simply connected. 
\end{enumerate}
Under these assumptions, we have
\begin{align}
\label{equ_splittingH2rel}&H_2(M, N; \Z) \simeq \pi_2(M, N) \simeq \pi_2(M) \oplus \pi_1(N) \simeq H_2(M; \Z) \oplus H_1(N; \Z), \\
\label{equ_splittingH1}&H_1(L; \Z) \simeq \pi_1(L) \simeq \pi_1(N) \oplus \pi_1(K) \simeq H_1(N; \Z) \oplus H_1(K; \Z).
\end{align}

To motivate the boundary lifting problem, observe that the induced homomorphism
\begin{equation}\label{equ_inducedhomoH1}
q |_{L,*} \colon \pi_1(L) \to \pi_1(N)
\end{equation}
has a nontrivial kernel. Thus, even if two relative homotopy classes $\beta_1$ and $\beta_2$ satisfy
$$
\partial \beta_1 = \partial \beta_2 \quad \mbox{in $\pi_1(N)$},
$$
their lifts $\beta_1^\sharp$ and $\beta_2^\sharp \in \pi_2(P_G,L)$ need not have the same boundary. In other words, a single monomial in the disk potential $W_N$ of $N$ may lift to a sum of \emph{distinct} monomials in the disk potential $W_L$ of $L$. 

Now let $\varphi \colon (\mathbb{D}, \partial \mathbb{D}) \to (M, N)$ be a holomorphic disk. By Proposition~\ref{proposition_liftingholo}, $\varphi$  admits a holomorphic lift 
$$
{\varphi^\sharp} \colon (\mathbb{D}, \partial \mathbb{D}) \to (P_{G}, L).
$$ 
Since $q \circ \partial {\varphi^\sharp} = \partial {\varphi}$, the boundary class $[\partial \varphi^\sharp]$ is determined by $[\partial \varphi]$ up to an element of $\pi_1(K)$ under the decomposition~\eqref{equ_splittingH1}.

To determine this $\pi_1(K)$-component, we fix a faithful representation $\rho \colon {G} \to \mathrm{GL}(V)$ where $G \simeq (\C^*)^n$ and $V \simeq \C^n$. We let  $G$ act on $P_G \times V$ from the right by 
\begin{equation}\label{equ_convention}
(p, v) * g = (pg, \rho(g) v) \quad \mbox{for $p \in P_G$, $v \in V$, and $g \in G$.}
\end{equation}
and consider the associated vector bundle 
$$
q_\mathcal{E} \colon \mathcal{E} \coloneqq P_{G} \times_\rho V \to M.
$$

Since $G$ is abelian, every complex irreducible representation of $G$ is one-dimensional. Moreover, the representation $V$ is completely reducible and hence decomposes as 
\begin{equation}\label{equ_decompositionvi}
V = V_1 \oplus \cdots \oplus V_n
\end{equation}
where each $V_j$ is one-dimensional.  Accordingly, the representation $\rho$ decomposes into characters $\rho = \oplus_{j=1}^n \chi_j$. For each $j$, let
\begin{equation}\label{equ_linebundleasso}
\mathcal{L}_j \coloneqq {P}_{G} \times_{\chi_j} V_j
\end{equation}
denote the associated line bundle. Then 
\begin{equation}\label{equ_bundleproj}
\mathcal{E} = {P}_{G} \times_\rho V \simeq \bigoplus_{j=1}^n \, ({P}_{G} \times_{\chi_j} V_j) \simeq \bigoplus_{j=1}^n \mathcal{L}_j.
\end{equation}

\begin{remark}\label{remark_conventions}
Under the convention~\eqref{equ_convention}, the equivalence relation for the associated vector bundle is
\begin{equation}\label{equ_equivalencerelations}
[pg,v] = [p,\rho(g)^{-1}v].
\end{equation}
We use this convention to ensure that the homogeneous line bundle $\mathcal{L}_\lambda$ over $G/B$ is ample for every regular dominant weight $\lambda$.
\end{remark}

Let $v_j$ be a nonzero vector in $V_j$ and set $v \coloneqq \sum v_j$. Since $\rho$ is faithful, the stabilizer of $v$ is trivial. Then the map
$$
\iota \colon {P}_{G} \to \mathcal{E}, \,\quad \mbox{ $p \mapsto [(p, v)]$.}
$$
is an open embedding. 
Moreover, the following diagram commutes$\colon$ 
\begin{equation}
\xymatrix{
{P}_{G} \ar[rr]^{\iota} \ar[rd]_{q} & & \mathcal{E} \ar[ld]^{q_\mathcal{E}}\\
& M \simeq {P}_{G} \git {G} & 
}
\end{equation}
In particular, $\iota(L)$ is an $(m+n)$-dimensional torus invariant under the fiberwise $G$-action on $\mathcal{E}$. By abuse of notation, we often denote $\iota(L)$ by $L$. In particular, we have the following lemma.

\begin{lemma}\label{lemma_Lisomorphic}
The map $\iota$ induces an isomorphism
$$
\iota_* \colon \pi_1(L) \to \pi_1(\iota(L)).
$$ 
\end{lemma}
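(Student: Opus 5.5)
The plan is to show that $\iota$ restricts to a diffeomorphism from $L$ onto $\iota(L)$. Once that is in place, the induced map on fundamental groups is an isomorphism by functoriality: $(\iota|_L^{-1})_* \circ \iota_* = \mathrm{id}$ and $\iota_* \circ (\iota|_L^{-1})_* = \mathrm{id}$.

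First I would check that $\iota$ is injective. Suppose $[(p,v)] = [(p',v)]$. Then there is $g \in G$ with $p' = pg$ and $v = \rho(g)^{-1} v$. By the equivalence relation~\eqref{equ_equivalencerelations}, this means $g$ lies in the stabilizer of $v$. Write $\rho = \oplus_j \chi_j$. Since every component $v_j$ is nonzero, the condition $\rho(g)v = v$ gives $\chi_j(g) = 1$ for all $j$. Faithfulness of $\rho$ then forces $g = e$, so $p = p'$.

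Next I would show that $\iota$ is an immersion, hence an open embedding (the paper already asserts this). In a local trivialization $P_G|_U \simeq U \times G$ we have $\mathcal{E}|_U \simeq U \times V$, and $\iota$ becomes $(x,g) \mapsto (x, \rho(g)^{-1} v)$. The orbit map $g \mapsto \rho(g)^{-1}v$ has injective differential, because its kernel is the Lie algebra of the trivial stabilizer. In coordinates it is $g \mapsto (\chi_j(g)^{-1} v_j)_j$, which maps $(\C^*)^n$ onto the open orbit $(\C^*)^n \cdot v$. It follows that $\iota$ is an injective local diffeomorphism between manifolds of equal real dimension, hence a diffeomorphism onto an open subset.

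Restricting to the compact submanifold $L$, the map $\iota|_L \colon L \to \iota(L)$ is a continuous bijection from a compact space onto a Hausdorff space, and so a homeomorphism (indeed a diffeomorphism, as a restriction of one). This gives the claimed isomorphism $\iota_* \colon \pi_1(L) \to \pi_1(\iota(L))$. It also shows that $\iota(L)$ is an $(m+n)$-torus.

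As a compatibility check, I would note that $\iota$ intertwines $K$-actions and covers the identity on $M$. Hence $\iota_*$ respects the splitting~\eqref{equ_splittingH1}: the fiber loops of $K$ go to the loops of the torus orbit of $v$ in the fibers of $\mathcal{E}$.

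The only genuine point is the triviality of the stabilizer of $v$. This uses both that every component $v_j$ is nonzero and that $\rho$ is faithful; the rest is formal.
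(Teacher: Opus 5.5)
Your proposal is correct and follows the same route the paper takes implicitly. The paper gives no separate proof: it deduces the lemma from its assertion that $\iota$ is an open embedding, which rests on faithfulness of $\rho$ making the stabilizer of $v$ trivial. You supply the details the paper omits, namely injectivity via the trivial stabilizer, the local-trivialization check that $\iota$ is a local diffeomorphism, and restriction to the compact torus $L$. Your side remark on equivariance holds only up to inversion, since $\iota(pg)=[p,\rho(g)^{-1}v]$, but the lemma does not use it.
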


For computational purposes, it is convenient to work with absolute homotopy classes rather than relative homotopy classes. We fix a collection of relative homotopy classes that can be capped off to form spherical classes. More precisely, we take an $m$-tuple of homotopy classes
\begin{equation}\label{equ_mhomotopyboundary}
\beta_1, \dots, \beta_m \in \pi_2(M, N) 
\end{equation}
such that $\partial \beta_1, \dots, \partial \beta_m$ form a basis of $\pi_1(N) \simeq H_1(N; \Z) \simeq \Z^m$.

For a given class $\beta \in \pi_2(M,N)$, there exists a unique integral vector 
\begin{equation}\label{equ_mathbbb}
\mathbf{b} \coloneqq (b_1, \dots, b_m) \in \Z^m
\end{equation}
such that
\begin{equation}\label{equ_boundaryfree}
\partial \beta + \sum_{j=1}^m b_j \partial \beta_j = 0.
\end{equation}
Under the identification~\eqref{equ_splittingH2rel}, the class
\begin{equation}\label{equ_sphericalalpha}
\alpha \coloneqq \beta + \sum_{j=1}^m b_j \beta_j
\end{equation}
may be regarded as an element of $\pi_2(M)$.

Now define
\begin{equation}\label{equ_alphasharp}
\alpha^\sharp \coloneqq \beta^\sharp + \sum_{j=1}^m b_j \beta^\sharp_j.
\end{equation}
In contrast to $\alpha$, the lifted class $\alpha^\sharp$ need \emph{not} be spherical since $q_* \colon \pi_2(P_G) \to \pi_2(M)$ is not surjective in general. Because $q_* (\alpha^\sharp) = \alpha$, the boundary class $\partial \alpha^\sharp$ has trivial $\pi_1(N)$-component. Hence, under the decomposition~\eqref{equ_splittingH1}, it is determined by its $\pi_1(K)$-component. By Lemma~\ref{lemma_Lisomorphic}, it suffices to determine the boundary class of $\iota_*(\alpha^\sharp)$ in $\mathcal{E}$. For simplicity, we continue to denote $\iota_*(\alpha^\sharp)$ by $\alpha^\sharp$.

For each $j = 1, \dots, n$, let $\eta_j \in \pi_2(\mathcal{E},L)$ denote the relative homotopy class represented by a fiberwise disk in the line bundle $\mathcal{L}_j$. Its boundary is an $S^1$-orbit in the fiber and it intersects the zero section of $\mathcal{L}_j$ transversely and positively at a single point. By adding suitable multiples of the classes $\eta_j$ to $\alpha^\sharp$, we can cap off $\alpha^\sharp$ to obtain a spherical class. Set
\begin{equation}\label{equ_widetildealpha1}
\widetilde{\alpha} = \beta^\sharp + \sum_{j=1}^m b_j \beta_j^\sharp + \sum_{j=1}^n a_j \eta_j
\end{equation}
where $a_1, \dots ,a_n \in \Z$. The following lemma determines these coefficients explicitly.

\begin{lemma}[Boundary lifting lemma]\label{lemma_boundaryliftinglemma}
For each $j = 1, \dots, n$, let $c_1(\mathcal{L}_j)$ denote the first Chern class of the line bundle $\mathcal{L}_j$ defined in~\eqref{equ_linebundleasso}. If we choose
$$
a_j \coloneqq \langle c_1(\mathcal{L}_j), \alpha \rangle,
$$
then the class $\widetilde{\alpha}$ in~\eqref{equ_widetildealpha1} is spherical, that is, 
\begin{equation}\label{equ_widetildealpha}
\partial \widetilde{\alpha} = 0.
\end{equation}
\end{lemma}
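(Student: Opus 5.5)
The plan is to reduce the statement to a winding-number computation for nonvanishing sections of the line bundles $\mathcal{L}_j$, one $j$ at a time.

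\textbf{Step 1: the boundary lives in a fiber torus.} Since $q_*(\alpha^\sharp) = \alpha$ is spherical, $\partial \alpha^\sharp$ has trivial $\pi_1(N)$-component under~\eqref{equ_splittingH1}. So, after a homotopy, I represent $\alpha^\sharp$ by a map $u \colon (\mathbb{D}, \partial \mathbb{D}) \to (P_G, L)$ with two properties. First, $u(\partial \mathbb{D})$ lies in the single $K$-orbit $q|_L^{-1}(x)$ for some $x \in N$. Second, $f \coloneqq q \circ u$ sends $\partial \mathbb{D}$ to $x$. Collapsing $\partial \mathbb{D}$ then turns $f$ into a map $\bar f \colon S^2 \to M$ representing $\alpha$.

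In $\mathcal{E}$, the fiber over $x$ is identified with $V = V_1 \oplus \cdots \oplus V_n$. The orbit $\iota(q|_L^{-1}(x))$ sits inside $\prod_j (V_j \setminus \{0\})$ as a product of circles, because $\rho = \oplus \chi_j$ is faithful. Hence a loop in this orbit is determined, as an element of $\pi_1(K) \simeq \Z^n$, by its winding numbers $k_1, \dots, k_n$ around the origins of the lines $V_1, \dots, V_n$. Likewise, $\partial \eta_j$ has winding $1$ in $V_j$ and $0$ in $V_i$ for $i \neq j$. Combining this with Lemma~\ref{lemma_Lisomorphic}, the claim $\partial \widetilde{\alpha} = 0$ becomes the numerical identity $k_j + a_j = 0$ for each $j$.

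\textbf{Step 2: identify the winding with a Chern number.} Compose $\iota \circ u$ with the projection $\mathcal{E} \to \mathcal{L}_j$ coming from~\eqref{equ_bundleproj}. This gives a section $s_j$ of $f^*\mathcal{L}_j$ over $\mathbb{D}$. The section $s_j$ is nowhere vanishing, because $\iota(P_G)$ consists of classes $[p, v]$ with every component $v_j \neq 0$.

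The bundle $f^*\mathcal{L}_j$ descends to $\bar f^*\mathcal{L}_j$ on $S^2$, since $f(\partial \mathbb{D}) = x$ lets us use the trivialization over the collapsed point. By standard obstruction theory, the winding of $s_j|_{\partial \mathbb{D}}$ in the fiber $(\mathcal{L}_j)_x$ equals $\pm \langle c_1(\mathcal{L}_j), \alpha \rangle$. Here $s_j$ is nonvanishing on the disk and the winding is measured against that trivialization, so the degree of the clutching function is exactly the Chern number.

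\textbf{Step 3: fix the sign via an intersection argument.} To pin down the sign cleanly, I use intersections instead. Let $Z_j \subseteq \mathcal{E}$ be the divisor where the $j$-th component vanishes; it is the pullback of the zero section of $\mathcal{L}_j$. For a spherical class $\widetilde{\alpha}$ in $\mathcal{E}$, the intersection number is $\widetilde{\alpha} \cdot Z_j = \langle c_1(\mathcal{L}_j), (q_{\mathcal{E}})_* \widetilde{\alpha} \rangle = \langle c_1(\mathcal{L}_j), \alpha \rangle$. On the other hand, $\alpha^\sharp$ and the $\beta_i^\sharp$ are disjoint from $Z_j$. The fiber disk $\eta_i$ meets $Z_j$ once positively if $i = j$, and not at all if $i \neq j$. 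Hence the $a_j$ appearing in any capping of $\alpha^\sharp$ to a sphere are forced to equal $\langle c_1(\mathcal{L}_j), \alpha \rangle$.

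It remains to show that a capping exists at all, namely by choosing $a_j = -k_j$. This works because the boundary windings of the $\partial \eta_j$ generate $\pi_1$ of the fiber torus. Comparing the two computations shows that the choice $a_j = \langle c_1(\mathcal{L}_j), \alpha \rangle$ kills the boundary.

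The main obstacle I expect is sign and orientation bookkeeping. This includes the convention~\eqref{equ_equivalencerelations}, the orientation of $\eta_j$ (positive intersection with the zero section), and the identification of $\pi_1(K)$ with the windings in each $V_j$. The intersection-number argument of Step 3 is designed to sidestep most of these issues.
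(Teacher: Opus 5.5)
Your proposal is correct, and its decisive step (Step 3) is the paper's argument. The paper identifies $\langle c_1(\mathcal{L}_j),\alpha\rangle$ with the intersection number of the capped class against the zero section of $\mathcal{L}_j$, via the Thom class $U_j$ with $s_j^*U_j = e(\mathcal{L}^{\R}_j)$; the lifted disks lie in $\iota(P_G)$ and miss that zero section, so only $a_j\eta_j$ contributes. Your clutching computation in Steps 1--2 becomes redundant once Step 3 is in place, but your explicit existence step fills a point the paper leaves implicit: $\partial\alpha^\sharp$ lies in $\ker(q|_{L,*})$, which is spanned by the $\partial\eta_j$, so a capping with $a_j=-k_j$ exists, and the intersection count by itself only shows the coefficients are forced.
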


\begin{proof}
Let $\mathrm{pr}_j \colon \mathcal{E} \to \mathcal{L}_j$ denote the projection onto the $j$-th summand in the decomposition~\eqref{equ_bundleproj}.
$\mathrm{pr}_j (\eta_i)$ is trivial for $i \neq j$, the coefficient $a_j$ can be determined independently by projecting to the line bundle $\mathcal L_j$.

Let $s_j \colon M \to \mathcal{L}_j$ be the zero section and set $Z_j \coloneqq s_j(M)$. Recall that the Poincar\'{e} dual of $Z_j$ is the Thom class $U_j$ of the line bundle $\mathcal{L}_j$. Moreover, the Euler class is given by 
$$
e(\mathcal{L}^\R_{j}) = s_j^* (U_j),
$$
see in \cite[Section 6]{BT82} for instance.  

Set
$$
\widetilde{\alpha}_j \coloneqq \mathrm{pr}_{j,*} \widetilde{\alpha} \,\, \mbox{ and } \,\, q_j \colon \mathcal{L}_j \to M.
$$
Then we have $\alpha = q_{j,*} (\widetilde{\alpha}_j)$ and
\begin{equation}\label{equ_c1euler}
\langle c_1(\mathcal{L}_j), \alpha \rangle = \langle e(\mathcal{L}^\R_{j}), q_{j,*} (\widetilde{\alpha}_j) \rangle = 
\langle s_j^* (U_{j}),  q_{j,*} (\widetilde{\alpha}_j) \rangle = 
\langle  U_j, \widetilde{\alpha}_j \rangle
\end{equation} 
since $s_j \circ q_j$ and $\mathrm{id}$ are homotopic. The right hand side is the algebraic intersection number of the homology class $\widetilde{\alpha}_j$ and the zero section $Z_j$. Because the lifted disk classes $\beta^\sharp$ and $\beta_j^\sharp$ are contained in $P_G$, they are disjoint from $Z_j$. Thus, the only contribution to the intersection number comes from the term $a_j \eta_j$ and hence~\eqref{equ_c1euler} is equal to $a_j$.
\end{proof}

Let $(\theta_i)_{i=1}^{m+n}$ be an ordered basis for $\pi_1(L) \simeq \Z^{m+n}$ represented by oriented loops. Since the restriction $q |_L$ in~\eqref{equ_qLprincipal} is a principal $K$-bundle, the induced homomorphism $q |_{L,*} \colon \pi_1(L) \to \pi_1(N)$ is onto. Moreover,
$$
\pi_1(L) / \mathrm{ker}( q |_{L,*} ) \simeq \pi_1(N) \simeq \Z^m
$$
so the kernel of $q |_{L,*}$ is a primitive sublattice of $\pi_1(L) \simeq \Z^{m+n}$. 

After replacing the basis if necessary, we may assume the following. 
\begin{enumerate}
\item For each $i = 1, \dots, n$, 
$$
\theta_{m+i} \coloneqq \partial \eta_i
$$  
where $\eta_i$ is given in~\eqref{equ_widetildealpha1}. Consequently, $(\theta_i)_{i={m+1}}^{m+n}$ forms a basis of $\mathrm{ker}(q |_{L,*}) \simeq \pi_1(K)$.
\item The tuple $(q_{L} \circ \theta_j)_ {j=1}^m$ forms a basis of $\pi_1(N)$.
\end{enumerate}

Let $(e_i)_{i=1}^{m+n}$ denote the basis of $H^1(L; \Z)$ dual to $(\theta_i)_{i=1}^{m+n}$. This induces a basis $(f_i)_{i=1}^m$ of $H^1(N ; \Z)$ such that $(q |_L)^* f_i = e_i$ for $i = 1, \dots, m$. Let $\mathbf{y} = (y_i)$ (resp. $\mathbf{z} = (z_i)$) be the variables corresponding to $\exp(e_i)$ for $L$ (resp. $\exp(f_i)$ for $N$). Then the disk potentials $W_L$ and $W_N$ are Laurent polynomials in the variables $\mathbf{y}$ and $\mathbf{z}$, respectively.

From~\eqref{equ_boundaryfree}, it follows that 
$$
\mathbf{z}^{\partial \beta}  \cdot \prod_{j=1}^m \mathbf{z}^{b_j \partial \beta_j}= 1.
$$
Similarly, by~\eqref{equ_widetildealpha}, we have
$$
\mathbf{y}^{\partial \beta^\sharp} \cdot \prod_{j=1}^m \mathbf{y}^{b_j \partial \beta_j^\sharp} \cdot \prod_{j=1}^n y_{m+j}^{a_j} = 1.
$$

The above discussion is summarized as follows.

\begin{corollary}
For every $\beta \in \pi_2(M,N)$,
$$
\mathbf{y}^{\partial \beta^\sharp}  = \prod_{j=1}^m \mathbf{y}^{- b_j\partial \beta_j^\sharp} \cdot \prod_{j=1}^n y_{m+j}^{- a_j}.
$$
\end{corollary}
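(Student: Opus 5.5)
The identity is obtained by converting the relation $\partial\widetilde{\alpha}=0$ of the boundary lifting lemma (Lemma~\ref{lemma_boundaryliftinglemma}) into a multiplicative identity of Laurent monomials. The passage from homology classes to monomials is the homomorphism $\pi_1(L)\to$ (Laurent monomials in $\mathbf{y}$), $\gamma\mapsto \mathbf{y}^{\gamma}$. It is defined through the dual basis $(e_i)$: $\mathbf{y}^\gamma=\prod_i y_i^{\langle e_i,\gamma\rangle}$. Since the pairing is additive, this map is a group homomorphism from the additive group $\pi_1(L)\simeq\Z^{m+n}$ to the multiplicative group of monomials. In particular, sums of boundary classes go to products of monomials, and the zero class goes to $1$.

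First, I apply the boundary map to~\eqref{equ_widetildealpha1}. Boundary $\partial\colon\pi_2(\mathcal{E},L)\to\pi_1(L)$ is a homomorphism. By Lemma~\ref{lemma_Lisomorphic}, boundaries computed in $\mathcal{E}$ agree with those in $P_G$ under $\iota_*$. Hence
$$
0=\partial\widetilde{\alpha}=\partial\beta^\sharp+\sum_{j=1}^m b_j\,\partial\beta_j^\sharp+\sum_{j=1}^n a_j\,\partial\eta_j ,
$$
where $a_j=\langle c_1(\mathcal{L}_j),\alpha\rangle$ and the vanishing is~\eqref{equ_widetildealpha}.

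Second, I use the normalization $\theta_{m+j}=\partial\eta_j$. Since $(e_i)$ is dual to $(\theta_i)$, this gives $\mathbf{y}^{\partial\eta_j}=y_{m+j}$. Applying the homomorphism $\gamma\mapsto\mathbf{y}^\gamma$ to the displayed relation then yields
$$
\mathbf{y}^{\partial\beta^\sharp}\prod_{j=1}^m\mathbf{y}^{b_j\partial\beta_j^\sharp}\prod_{j=1}^n y_{m+j}^{a_j}=1,
$$
which is the display preceding the corollary. Solving for $\mathbf{y}^{\partial\beta^\sharp}$ gives the claimed formula.

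The only point needing care is that the $b_j$ are exactly those of~\eqref{equ_boundaryfree}, so that $\alpha$ in~\eqref{equ_sphericalalpha} really lies in $\pi_2(M)$; this is what makes the lemma applicable. This holds because $\partial\beta_1,\dots,\partial\beta_m$ form a basis of $\pi_1(N)$, so the $b_j$ exist and are unique. The splitting~\eqref{equ_splittingH2rel} then identifies $\alpha$ with a spherical class, because $M$ is simply connected. I also have to be consistent about signs: the orientation of $\eta_j$ (positive intersection with $Z_j$) must be the same one used to define $\theta_{m+j}$. With matching orientations no sign discrepancy arises. Beyond this bookkeeping the corollary is formal, and the real content lies in Lemma~\ref{lemma_boundaryliftinglemma}.
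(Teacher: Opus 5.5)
Your proposal is correct and follows the paper's own argument. You apply $\partial$ to $\widetilde{\alpha}$, invoke $\partial\widetilde{\alpha}=0$ from the boundary lifting lemma, use the normalization $\theta_{m+j}=\partial\eta_j$ so that $\mathbf{y}^{\partial\eta_j}=y_{m+j}$, exponentiate, and solve for $\mathbf{y}^{\partial\beta^\sharp}$. Your remarks on the uniqueness of $\mathbf{b}$ and on orienting $\eta_j$ consistently with $\theta_{m+j}$ are bookkeeping that the paper leaves implicit.
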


We close this section with an elementary example adapted from \cite{Kim26}.

\begin{example}
Let $n \in \mathbb{Z}_{\geq 0}$ and let $\mathcal{L} = \mathcal{O}(-n)$ be the line bundle over $\CP^1$. Let $P$ denote the complement of the zero section of $\mathcal{L}$. Then $P$ is a principal $\C^*$-bundle over $\CP^1$ with respect to the fiberwise $\C^*$-action. 

The induced $S^1$-action admits a symplectic reduction $P \git S^1 \simeq \CP^1$. Let $N \subseteq \CP^1$ be an equator, regarded as a Lagrangian submanifold that divides $\CP^1 \simeq S^2$ into two hemispheres. There are precisely two holomorphic disks of Maslov index two, one passing through the north pole and the other passing through the south pole. Let $\beta_1, \beta_2 \in\pi_2(\CP^1,N)$ denote their relative homotopy classes. The disk potential of $N$ is 
$$
W_{N}(z) = z + \frac{1}{z}.
$$

Let $L (\subseteq P)$ be the principal $S^1$-bundle over $N$ contained in a level set of the moment map for the fiberwise $S^1$-action. Consider the associated line bundle $\mathcal{E} \to \CP^1$ corresponding to the representation 
$$
\rho \colon \C^* \to \mathrm{GL}(\C)\,\, \mbox{ given by } \, (g \mapsto (v \mapsto g^{-1}v)).
$$ 
Then $\mathcal{E}$ is isomorphic to $\mathcal{O}(-n)$, cf. Remark~\ref{remark_conventions}. 

Let $\beta_1^\sharp, \beta_2^\sharp \in \pi_2(P,L)$ denote the lifted holomorphic disk classes. Choose 
\begin{equation}\label{equ_choiceofvartheta}
\theta_1 \coloneqq \partial \beta_1^\sharp \mbox{ and } \theta_2 \coloneqq \partial \eta,
\end{equation}
where $\eta$ is the relative homotopy class of a fiberwise disk bounded by an $S^1$-orbit and intersecting the zero section transversely and positively at a single point. Let $y_1$ and $y_2$ be the corresponding exponential variables. By Proposition~\ref{prop_liftingmoduli}, the disk potential of $L$ in $P$ is 
$$
W_{L}(\mathbf{y}) = \mathbf{y}^{\partial \beta_1^\sharp} + \mathbf{y}^{\partial \beta_2^\sharp}.
$$

Since $\langle c_1(\mathcal{E}), [\beta_1 + \beta_2] \rangle = - n$,
the boundary lifting lemma yields
$$
\mathbf{y}^{\partial \beta_1^\sharp} \mathbf{y}^{\partial \beta_2^\sharp} y_2^{-n} = 1.
$$
By our choice of basis in~\eqref{equ_choiceofvartheta},
$$
\mathbf{y}^{\partial \beta_1^\sharp} = y_1 \mbox{ and } \mathbf{y}^{\partial \beta_2^\sharp} = \frac{y_2^n}{y_1}.
$$
Therefore,
\begin{equation}\label{equ_diskpotentialLO(-n)}
W_{L}(\mathbf{y}) = y_1 + \frac{y_2^n}{y_1}.
\end{equation}
\end{example}

\begin{remark}
If we instead use the standard representation $\rho(g) v = g v$, then $\mathcal{E} \simeq \mathcal{O}(n)$ and the corresponding disk potential is
$$
W_L(\mathbf{y}) = y_1 + \frac{1}{y_1y_2^n},
$$
which is equal to~\eqref{equ_diskpotentialLO(-n)} up to a coordinate change. 
\end{remark}

\section{Disk potentials for basic affine spaces}\label{sec_diskpotentialbasicaffinespaces}

The aim of this section is to compute the disk potential for the basic affine space from the disk counting invariants of a monotone Lagrangian torus in the flag variety, using the machinery developed in Sections~\ref{sec_liftingholodisks} and~\ref{sec_boundaryliftinglemma}. 

\subsection{Flag manifolds and basic affine spaces}

We begin by reviewing the basic affine space and the flag variety, fixing notation and recalling the facts needed later.

Let $G \coloneqq \mathrm{SL}_{n+1}(\C)$ be the simply connected complex simple Lie group of type $A_n$ with Lie algebra $\mathfrak{g}$. Set $[n] \coloneqq \{1, 2, \dots, n\}$. Let $\mathsf{A} = (a_{i,j})_{i,j\in [n]}$ be the Cartan matrix of $\mathfrak{g}$. Let $H$ be the maximal torus of $G$ consisting of all diagonal matrices of $G$ and let $B$ (resp. $B^-$) be the Borel subgroup consisting of all upper (resp. lower) triangular matrices of $G$. We denote by $U$ (resp. $U^-$) the unipotent radical of $B$ (resp. $B^-$).

Let $\mathfrak{h}$ be the Lie algebra of $H$ and let $\mathfrak{h}^* \coloneqq \mathrm{Hom}_\mathbb{C}(\mathfrak{h}, \mathbb{C})$ be its dual. We write 
$
\langle- , -  \rangle \colon \mathfrak{h} \times \mathfrak{h}^* \to \C 
$ for the natural pairing. Let $\Phi \coloneqq \Phi(G,H)$ be the root system and let $\Phi^+ \coloneqq \Phi^+(B,H)$ be the set of positive roots. The Weyl group $W$ of $G$ is defined by $N_G(H)/H$ where $N_G(H)$ is the normalizer of $H$ in $G$. It is isomorphic to the symmetric group $\mathfrak{S}_{n+1}$.

For each $i \in [n]$, let $\alpha_i \in \mathfrak{h}^*$ be the $i$-th simple root, let $s_i \in W$ be the corresponding simple reflection, and let $h_i \in \mathfrak{h}$ be the corresponding simple coroot defined by $\langle h_i, \alpha_j \rangle = a_{i,j}$ for all $j \in [n]$. The $i$-th fundamental weight $\varpi_i \in \mathfrak{h}^*$ is defined by $\langle h_j, \varpi_i \rangle = \delta_{j,i}$ for all $j \in [n]$. The weight lattice $\mathsf{P}$ is 
$$
\mathsf{P} \coloneqq \bigoplus_{i \in [n]} \Z \varpi_i.
$$
The sets of dominant and regular dominant integral weights are defined by
\begin{align*}
&\mathsf{P}^+ \coloneqq \{ \lambda \in \mathsf{P} \mid \langle h_i, \lambda  \rangle \geq 0 \mbox{ for all } i \in [n] \} \mbox{ and } \\
&\mathsf{P}^{++} \coloneqq \{ \lambda \in \mathsf{P} \mid \langle h_i, \lambda  \rangle > 0 \mbox{ for all } i \in [n] \},
\end{align*}
respectively.

The complete flag manifold is 
$$
\mathcal{F}\ell(n+1) \coloneqq \bigl\{ (0 \subseteq V_1 \subseteq \dots \subseteq V_n \subseteq \C^{n+1} ) \mid \dim_\C V_j = j \mbox{ for each $j \in [n]$} \bigr\}.
$$
It admits the Pl\"{u}cker embedding to the product of projective spaces 
\begin{equation}\label{equ_prodofproj}
\prod_{j=1}^n \mathbb{P}(V_{\varpi_j}) \simeq \prod_{j=1}^n \mathbb{P} \bigl(\wedge^j \C^{n+1}\bigr)
\end{equation}
where $V_{\varpi_j}$ is the irreducible representation of highest weight $\varpi_j$. Explicitly, if $\{v_{i1},\dots,v_{ii}\}$ is a basis of $V_i$ for each $i \in [n]$, then the embedding is given by
$$
\bigl(0 \subseteq V_1 \subseteq \dots \subseteq V_n \subseteq \C^{n+1} \bigr) \mapsto \bigl( [v_{11}], [v_{21}\wedge v_{22}], \dots, [v_{n1}\wedge\cdots\wedge v_{nn}] \bigr).
$$
The homogeneous coordinates on~\eqref{equ_prodofproj} are called the Pl\"{u}cker coordinates. The image of the complete flag manifold $\mathcal{F}\ell(n+1)$ under the Pl\"{u}cker embedding is defined by the Pl\"{u}cker relations. The group $G$ acts transitively on $\mathcal{F}\ell(n+1)$ and the stabilizer of the standard flag is the Borel subgroup $B$. Hence,
$$
\mathcal{F}\ell(n+1) \simeq G/B,
$$
which is a projective homogeneous variety of complex dimension $m \coloneqq {n(n+1)}/{2}$. 

A regular dominant integral weight $\lambda \in \mathsf{P}^{++}$ can be written as
\begin{equation}\label{equ_lambdagiven}
\lambda = \sum_{j=1}^{n} \eta_j \varpi_j \mbox{ with $\eta_j > 0$ for all $j$.}
\end{equation}
Let $\epsilon_j \in \mathfrak{h}^*$ be the $j$-th standard weight defined by $A \mapsto a_{jj}$. 
Setting $\lambda_j \coloneqq \sum_{i=j}^{n} \eta_i$ and $\lambda_{n+1} = 0$, we obtain 
$$
\lambda = \sum_{j=1}^{n+1} \lambda_j  \epsilon_j.
$$
We equip $\mathcal{F}\ell(n+1)$ with the K\"{a}hler form obtained by pulling back the product K\"{a}hler form
\begin{equation}\label{equ_FubiniStudy}
\sum_{j=1}^n (\lambda_j - \lambda_{j+1}) \, \omega_{\mathbb{P}(V_{\varpi_j})}
\end{equation}
where $\omega_{\mathbb{P}(V_{\varpi_j})}$ denotes the Fubini--Study form normalized so that $\int_{\CP^1} \omega_{\mathbb{P}(V_{\varpi_j})} = 1$. We denote the resulting K\"{a}hler form on $\mathcal{F}\ell(n+1)$ by $\omega_\lambda$. 

The weight $\lambda$ also determines the homogeneous line bundle over $G/B$
$$
\mathcal{L}_\lambda \coloneqq (G \times \C) / B \quad \mbox{where } (g,v) * b = (gb, \lambda(b) v) \,\, \mbox{for $g \in G, b \in B$, and $v \in \C$}.
$$ 
If $\lambda  \in \mathsf{P}^{++}$, then $\mathcal{L}_\lambda$ is very ample. Moreover, the K\"{a}hler form $\omega_\lambda$ represents the first Chern class of $\mathcal{L}_\lambda$, namely, 
\begin{equation}\label{equ_c1w}
c_1(\mathcal{L}_\lambda) = [\omega_\lambda] \quad \mbox{in $H^2(\mathcal{F}\ell(n+1) ; \Z)$.}
\end{equation}

For each Weyl group element $w \in W$, we choose a representative in $N_G(H)$, which we also denote by $w$ by abuse of notation. The Schubert cell associated with $w$ is defined by $C_w \coloneqq B{w}B/B$ and the Schubert variety $X_{w}$ is defined by its Zariski closure in $G/B$. The homology $H_\bullet(G/B;\Z)$ has a basis consisting of the fundamental classes of Schubert varieties. In particular, for each simple reflection $s_i$, the Schubert variety $X_{s_i}$ is a complex curve and the classes $[X_{s_i}]$ for $i \in [n]$ form a basis of $H_2(G/B;\Z)$.

We now introduce the basic affine space. Consider the affine space 
\begin{equation}\label{equ_prodaffinespace}
\prod_{j=1}^n \bigl(\wedge^j \C^{n+1}\bigr)
\end{equation}
whose coordinates are called the (affine) Pl\"{u}cker coordinates. The Pl\"{u}cker relations define an affine subvariety of~\eqref{equ_prodaffinespace}. In addition, we require each component $\xi_j \in \wedge^j \C^{n+1}$ to be nonzero and then we obtain a smooth quasi-affine subvariety, denoted by $\mathcal{B}(n+1)$ and called the \emph{basic affine space} or \emph{fundamental affine space}. It is equipped with the K\"{a}hler form $\omega_{\mathcal{B}(n+1)}$ induced from the standard symplectic form on the ambient affine space. 

The group $G$ acts transitively on $\mathcal{B}(n+1)$ and the stabilizer of the standard point is the unipotent radical $U$ of $B$. Hence, 
$$
\mathcal{B}(n+1) \simeq G/U. 
$$
Moreover, $G/U$ carries a natural left action of $G$ and a natural right action of $B/U$. The quotient by the right action yields the projection
\begin{equation}\label{equ_projectionq}
q \colon G/U \to G/B,
\end{equation}
which is a principal $B/U$-bundle. In the next subsection, we use this bundle to relate the disk counting invariants of $G/U$ and $G/B$.

\subsection{Disk potentials for basic affine spaces}

Let $G = \mathrm{SL}_{n+1}(\C)$ and let $N$ be a Lagrangian torus in the flag manifold $G/B$ where $\dim_\C G/B = m = n(n+1)/2$. The goal of this subsection is to compute the disk potential for $G/U$ by lifting holomorphic disks from $G/B$ and relating their open Gromov--Witten invariants to those of $N$.

Consider the principal $B/U$-bundle $q \colon G/U \to G/B$ defined in~\eqref{equ_projectionq}. We identify $B/U$ with $(\C^*)^n$ via
\begin{equation}\label{equ_identification}
B/U \simeq (\C^*)^{n+1}/ \mathrm{diag}(\C^*) \simeq (\C^*)^n, \quad [(b_{i,j})] \mapsto \bigl(b_{1,1} b_{2,2}^{-1}, b_{2,2} b^{-1}_{3,3}, \dots, b_{n,n}b^{-1}_{n+1,n+1} \bigr).
\end{equation}
The torus $(\C^*)^n$ acts on $\prod_{j=1}^n \bigl(\wedge^j \C^{n+1}\bigr)$ from the right by coordinatewise scaling$\colon$
\begin{equation}\label{equ_actionT}
(v_1, \dots, v_n) * (t_1, \dots, t_n) \mapsto (t_1v_1, \dots, t_nv_n).
\end{equation}
Under the identification~\eqref{equ_identification}, the right $B/U$-action on $G/U$ is the restriction of the coordinatewise scaling action~\eqref{equ_actionT} to the quasi-affine subvariety $\mathcal{B}(n+1) \simeq G/U$.

Let $T \coloneqq (S^1)^n$ denote the maximal compact subgroup of $(\C^*)^n$. The induced $T$-action on $G/U$ is Hamiltonian and we denote its moment map by 
$$
\mu_T \colon G/U \to \R^n.
$$ 

For the regular dominant integral weight $\lambda \in \mathsf{P}^{++}$ in~\eqref{equ_lambdagiven}, define
\begin{equation}\label{equ_choiceofnu}
\nu \coloneqq (\eta_1, \dots, \eta_n) = (\lambda_1-\lambda_2, \dots , \lambda_n-\lambda_{n+1}).
\end{equation}
The restriction of $q$ to the level set $\mu_T^{-1}(\nu)$
\begin{equation}\label{equ_quotientmapatlevelset}
q_\nu \coloneqq q|_{\mu_T^{-1}(\nu) } \colon \mu_T^{-1}(\nu) \to G/B
\end{equation}
is a principal $T$-bundle. Moreover, by the choice of the level set, the K\"{a}hler forms satisfy
\begin{equation}\label{equ_Kahlerforms}
q_\nu^* \, \omega_{\lambda} = \omega_{\mathcal{B}(n+1)}\big|_{\mu_T^{-1}(\nu)}.
\end{equation}

Let $L$ be the union of the $T$-orbits over the Lagrangian torus $N \subseteq G/B$ inside the level set $\mu_T^{-1}(\nu)$. Equivalently, $L = q_\nu^{-1}(N)$. Since $q_\nu$ is a principal $T$-bundle satisfying~\eqref{equ_Kahlerforms}, it follows that $L$ is a Lagrangian submanifold of $G/U$. Throughout this subsection, we assume that $L$ is a Lagrangian torus in $G/U$, cf. Proposition~\ref{proposition_toricdegtorus}.

Fix homotopy classes 
\begin{equation}\label{equ_beta1m}
\beta_1, \dots, \beta_m \in \pi_2(G/B, N) \simeq \pi_2(G/B) \times \pi_1(N)
\end{equation}
such that $\{\partial \beta_1, \dots, \partial \beta_m\}$ forms a basis of $\pi_1(N) \simeq \Z^m$ as in~\eqref{equ_mhomotopyboundary}. Then, for every class $\beta \in \pi_2(G/B,N)$, there exists a unique integral vector 
\begin{equation}\label{equ_mathbbbb}
\mathbf{b} \coloneqq (b_1, \dots, b_m) \in \Z^m
\end{equation}
such that
\begin{equation}\label{equ_boundaryfree2}
\partial \beta + \sum_{j=1}^m b_j \partial \beta_j = 0.
\end{equation}
Under the identification $\pi_2(G/B, N) \simeq \pi_2(G/B) \times \pi_1(N)$, the class
\begin{equation}\label{equ_sphericalalpha}
\alpha \coloneqq \beta + \sum_{j=1}^m b_j \beta_j.
\end{equation}
lies in $\pi_2(G/B)$ and hence defines a spherical class.

We now introduce the boundary lifting map, which will be used to describe the Laurent monomial associated with each lifted holomorphic disk. Let the spherical class $\alpha$ be expressed in the Schubert basis as
\begin{equation}\label{equ_alphaclass}
\alpha = \sum_{i=1}^n a_i [X_{s_i}]. 
\end{equation}
We define the coordinate vector of $\alpha$ with respect to the Schubert basis by
\begin{equation}\label{equ_aaaa}
\mathbf{a} \coloneqq  (a_1, \dots, a_n) \in \Z^n.
\end{equation}

\begin{definition}\label{def_boundaryliftingmap}
Fix the homotopy classes in~\eqref{equ_beta1m}. The \emph{boundary lifting map} 
$$
\partial^\sharp \colon \pi_2(G/B, N) \to \Z^n \times \Z^m
$$
is defined as follows. Given a class $\beta \in \pi_2(G/B, N)$, let $\mathbf{b}$ be the unique vector determined by~\eqref{equ_boundaryfree2} and let
$
\alpha=\beta + \sum_{j=1}^{m}b_j\beta_j
$
be the spherical class defined in~\eqref{equ_sphericalalpha}. Let $\mathbf{a}$ denote the coordinate vector of $\alpha$ with respect to the Schubert basis in~\eqref{equ_aaaa}. We then define
$$
\partial^\sharp (\beta) \coloneqq (-\mathbf{a}, -\mathbf{b}).
$$
\end{definition}

Suppose that the level $\nu$ in~\eqref{equ_choiceofnu} is chosen to be a positive multiple of $(1,\dots,1)$. Then the corresponding K\"{a}hler form $\omega_{\lambda}$ on $G/B$ is monotone. Throughout the remainder of this subsection, we take $\lambda$ to be twice the sum of the fundamental weights. With this choice,
\begin{equation}\label{equ_sumoffundweights}
\nu = (2, \dots, 2)
\end{equation}
and hence $\mathcal L_\lambda \cong K_{G/B}^{-1}$.

Since $G/U$ deformation retracts onto $\mu_T^{-1}(\nu)$ while fixing $L$, there is an isomorphism 
$$
\pi_2(G/U, L) \simeq \pi_2(\mu_T^{-1}(\nu), L).
$$ 
Together with~\eqref{equ_Kahlerforms}, this implies that the symplectic area is preserved under the induced homomorphism
$$
q_* \colon \pi_2(G/U, L) \to \pi_2(G/B,N).
$$ 
Moreover, by Lemma~\ref{lemma_factsfromKIM}, the Maslov index is also preserved. Therefore, we obtain the following.

\begin{lemma}\label{lemma_monotoneLagrangiantori}
Let $L \coloneqq q_\nu^{-1}(N)$. Then $L$ is monotone if and only if $N$ is monotone.
\end{lemma}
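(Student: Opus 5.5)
The plan is to compare, class by class, the two ratios entering the monotonicity condition, namely Maslov index over symplectic area, on the two sides of the isomorphism
$q_* \colon \pi_2(G/U, L) \to \pi_2(G/B, N)$ from Lemma~\ref{lemma_factsfromKIM}(1).
Both monotonicity conditions are statements about all relative classes, so a bijection between the classes that preserves both quantities will give the equivalence directly.

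\textbf{Maslov index.} For $\beta^\sharp \in \pi_2(G/U,L)$ we have $\mathrm{MI}_L(\beta^\sharp) = \mathrm{MI}_N(q_*\beta^\sharp)$ by Lemma~\ref{lemma_factsfromKIM}(3).

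\textbf{Area.} I use the deformation retraction of $G/U$ onto $\mu_T^{-1}(\nu)$ fixing $L$. It lets us represent each $\beta^\sharp$ by a map $u \colon (\mathbb{D}, \partial\mathbb{D}) \to (\mu_T^{-1}(\nu), L)$. Symplectic area is a homotopy invariant of relative classes, since $\omega$ is closed and vanishes on the Lagrangian $L$. Hence $\omega(\beta^\sharp) = \int u^*\omega_{\mathcal{B}(n+1)}$. By \eqref{equ_Kahlerforms} this equals $\int (q_\nu\circ u)^*\omega_\lambda = \omega_\lambda(q_*\beta^\sharp)$.

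\textbf{Conclusion.} Combining the two, $\mathrm{MI}_L(\beta^\sharp) = \delta\,\omega(\beta^\sharp)$ for all $\beta^\sharp$ if and only if $\mathrm{MI}_N(\beta) = \delta\,\omega_\lambda(\beta)$ for all $\beta \in \pi_2(G/B,N)$, because $q_*$ is surjective (indeed bijective). The same constant $\delta$ works on both sides.

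\textbf{Main obstacle.} The delicate point is that the definition of monotonicity in the paper is phrased for simply connected ambient spaces. $G/B$ is simply connected, and $G/U$ is simply connected because $G=\mathrm{SL}_{n+1}(\C)$ is simply connected and $U$ is connected, so the definition applies to both. The other point needing care is justifying that the homotopy used in the area computation stays within relative classes. That is, the retraction must be rel $L$, which is exactly what is asserted just before the lemma.
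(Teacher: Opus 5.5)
Your proposal is correct and follows essentially the same argument as the paper: the deformation retraction of $G/U$ onto $\mu_T^{-1}(\nu)$ fixing $L$, together with the pullback relation $q_\nu^*\omega_\lambda = \omega_{\mathcal{B}(n+1)}|_{\mu_T^{-1}(\nu)}$, shows that the isomorphism $q_*$ preserves symplectic area, and Lemma~\ref{lemma_factsfromKIM} shows it preserves Maslov index, so the same constant $\delta$ works on both sides. Your extra check that $G/U$ is simply connected, so that the paper's definition of monotonicity applies, is a detail the paper leaves implicit.
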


We now state the main theorem of this section.

\begin{theorem}\label{theorem_maintheorem2}
Assume that the complex structure $J$ is regular, $N$ is a monotone Lagrangian torus, and $L$ is a Lagrangian torus. Then the disk potential of $L$ in $G/U$ equals
\begin{equation}\label{equ_diskpoWLGU}
W_L (\mathbf{y}) = \sum_{\beta \in \pi_2(G/B, N)} n_\beta \, \mathbf{y}^{\partial^{\sharp}(\beta)}
\end{equation}
where $\partial^{\sharp}(\beta)$ is the image of $\beta$ under the boundary lifting map in Definition~\ref{def_boundaryliftingmap}.
\end{theorem}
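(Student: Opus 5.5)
The plan is to combine Corollary~\ref{cor_diskposreg} with the boundary lifting lemma (Lemma~\ref{lemma_boundaryliftinglemma}). We verify the hypotheses of Corollary~\ref{cor_diskposreg} for $q_\nu \colon \mu_T^{-1}(\nu) \to G/B$, extended to the principal $(\C^*)^n$-bundle $q \colon G/U \to G/B$. Setup~\ref{assumption_pgpmathbbg} holds: $P_K = \mu_T^{-1}(\nu)$, $G/U \simeq P_K \times_T (\C^*)^n$ because $G/U$ retracts onto the level set, and \eqref{equ_Kahlerforms} supplies the compatibility of Kähler forms. Since $N$ is monotone in the monotone manifold $(G/B,\omega_\lambda)$, the pair $(G/B,N)$ is positive. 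The complex structure $J$ is regular, so every Maslov index two class is regular. The moduli spaces $\mathcal{M}_1(G/B,N,J_M,\beta)$ are compact: $G/B$ is compact, and monotonicity together with minimal Chern number two (the anticanonical class pairs to $2$ with each $[X_{s_i}]$) rules out sphere bubbles in Maslov index two, so \eqref{equ_mmst} holds. Finally, $\pi_2(G/U)=0$, since $G/U$ is homotopy equivalent to $SU(n+1)/(U\cap SU(n+1))$-type data; more directly, $\pi_2(G)=0$ and $U$ is contractible, so $\pi_2(G/U)\simeq \pi_1(U)=0$. Corollary~\ref{cor_diskposreg} then gives $W_L(\mathbf{y}) = \sum_\beta n_\beta\, \mathbf{y}^{\partial \beta^\sharp}$.

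It remains to identify $\mathbf{y}^{\partial\beta^\sharp}$ with $\mathbf{y}^{\partial^\sharp(\beta)}$. We choose the basis of $\pi_1(L)$ as in Section~\ref{sec_boundaryliftinglemma}: $\theta_j = \partial\beta_j^\sharp$ for $j\le m$ (these project to a basis of $\pi_1(N)$) and $\theta_{m+i}=\partial\eta_i$. For the associated bundle we use the faithful representation $\rho = \oplus_i \chi_i$ of $(\C^*)^n$, with $\chi_i$ the $i$-th coordinate character. We then need to compute $c_1(\mathcal{L}_i)$ for $\mathcal{L}_i = G/U \times_{\chi_i} \C$ under the convention \eqref{equ_equivalencerelations}. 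Since the $i$-th factor of \eqref{equ_actionT} scales $\wedge^i\C^{n+1}$, and $\mathcal{L}_i$ should therefore be the line bundle $\mathcal{L}_{\varpi_i}$ (the pullback of $\mathcal{O}(1)$ from $\mathbb{P}(V_{\varpi_i})$, consistent with Remark~\ref{remark_conventions}), we get $\langle c_1(\mathcal{L}_i), [X_{s_j}]\rangle = \delta_{ij}$. Hence, for $\alpha=\sum a_j[X_{s_j}]$, Lemma~\ref{lemma_boundaryliftinglemma} gives coefficients $\langle c_1(\mathcal{L}_i),\alpha\rangle = a_i$.

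The Corollary following Lemma~\ref{lemma_boundaryliftinglemma} then yields
\[
\mathbf{y}^{\partial\beta^\sharp} = \prod_{j=1}^m y_j^{-b_j}\prod_{i=1}^n y_{m+i}^{-a_i},
\]
which is $\mathbf{y}^{(-\mathbf{a},-\mathbf{b})}$ after ordering the variables as in Definition~\ref{def_boundaryliftingmap}, with the $\pi_1(K)$-variables first. Summing over $\beta$ gives \eqref{equ_diskpoWLGU}.

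The main obstacle is the identification $c_1(\mathcal{L}_i)=c_1(\mathcal{L}_{\varpi_i})$ with the correct sign, i.e.\ checking that the fiber of $\mathcal{L}_i$ with the convention $[pg,v]=[p,\chi_i(g)^{-1}v]$ is the tautological dual of the line spanned by $\xi_i$, so that it is $\mathcal{O}(1)$ and not $\mathcal{O}(-1)$. We will handle this by computing on $X_{s_i}\simeq\CP^1$, which reduces to the example of $\mathcal{O}(-n)$ treated earlier. A secondary point is making sure that the compactness and Fano hypotheses are genuinely met; monotonicity of $N$ and the minimal Chern number of $G/B$ handle this.
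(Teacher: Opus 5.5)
Your proposal is correct and is essentially the paper's proof: you invoke Corollary~\ref{cor_diskposreg}, using monotonicity, minimal Chern number two, compactness of $G/B$ and $\pi_2(G/U)=0$, and then read off the fiber component of $\partial\beta^\sharp$ from the boundary lifting lemma via Chern numbers on the Schubert curves $X_{s_i}$. The only difference is the choice of representation: you take the coordinate characters of the scaling action~\eqref{equ_actionT}, so $\mathcal{L}_i\cong\mathcal{L}_{\varpi_i}$ and the coefficients are $a_i$ directly, whereas the paper uses the diagonal characters $\epsilon_j$, gets $a_j-a_{j-1}$ from Lemma~\ref{lemma_chernnumbercal}, and then passes to the basis $\sigma_j=\partial\eta_j-\partial\eta_{j+1}$, which is exactly the class of your $\partial\eta_j$.
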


To apply the boundary lifting lemma, consider the standard representation $\rho \colon B/U \to \mathrm{GL}(\mathbb{C}^{n+1})$ given by
$$
b * (v_1, \dots, v_{n+1}) = (b_{1,1} v_1, \dots, b_{n+1,n+1}v_{n+1}).
$$
Let $\{ e_1, \dots, e_{n+1} \}$ denote the standard basis of $\mathbb{C}^{n+1}$. Since $B/U$ acts diagonally, the representation decomposes into one dimensional weight spaces
$$
\mathbb{C}^{n+1} = V_1 \oplus \dots \oplus V_{n+1} \quad \mbox{where $V_j = \mathbb{C} e_j$.}
$$
Let $\epsilon_j$ be the character given by $\epsilon_j \bigl( \mathrm{diag}(t_1, \dots, t_{n+1}) \bigr) = t_j$. Then $\rho = \bigoplus_{j=1}^{n+1} \epsilon_j$.

Let $\mathcal{E}^\prime \coloneqq G/U \times_{\rho} \mathbb{C}^{n+1}$ be the vector bundle over $G/B$ associated with $\rho$. For each $j \in [n+1]$, let $\mathcal{L}_j \coloneqq G/U \times_{\epsilon_j} V_j$ be the associated line bundle. Then
\begin{equation}\label{equ_decompositionline}
\mathcal{E}^\prime \simeq \bigoplus_{j=1}^{n+1}  G/U \times_{\epsilon_j} V_j \simeq \bigoplus_{j=1}^{n+1} \mathcal{L}_j.
\end{equation}
For our purposes, we consider the subbundle of rank $n$ given by
$$
\mathcal{E} \coloneqq \bigoplus_{j=1}^{n} \mathcal{L}_j.
$$

For each $j \in [n]$, let $\eta_j \in \pi_2(\mathcal{L}_j,L)$ denote the relative homotopy class represented by a fiberwise disk in $\mathcal{L}_j$ that intersects the zero section transversely and positively at a single point. Via the natural inclusion $\mathcal{L}_j \to \mathcal{E}$, we also regard $\eta_j$ as an element of $\pi_2(\mathcal{E},L)$. Recall that $\pi_1(L) \simeq \pi_1(T) \times \pi_1(N)$. We fix the ordered basis of $\pi_1(L)$ in $G/U$
\begin{equation}\label{equ_sigmabeta}
\sigma_1, \dots, \sigma_n, \partial \beta_1^\sharp, \dots, \partial \beta_m^\sharp.
\end{equation}
where
\begin{equation}\label{equ_identificationofact}
\sigma_j \coloneqq \partial \eta_{j} - \partial \eta_{j+1}.
\end{equation}
Here we set $\partial \eta_{n+1} = 0$ since the line bundle summand $\mathcal{L}_{n+1}$ is omitted from the definition of $\mathcal{E}$. Let $\mathbf{y}$ denote the exponential coordinates corresponding to the basis~\eqref{equ_sigmabeta}. 

We state one more lemma.

\begin{lemma}[See Propositions 1.4.1 and 1.4.3 in \cite{Bri04}] \label{lemma_chernnumbercal}
For each $\lambda \in \mathsf{P}^+$, the associated homogeneous line bundle $\mathcal{L}_\lambda$ is globally generated and
$$
\langle c_1(\mathcal{L}_\lambda) , \alpha \rangle = \sum_{i=1}^n a_i (\lambda_i - \lambda_{i+1}).
$$
In particular, 
$$
\langle c_1(\mathcal{L}_j) , \alpha \rangle = a_j - a_{j-1},
$$
where we set $a_0 \coloneqq 0$.
\end{lemma}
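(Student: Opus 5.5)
The plan is to prove the Chern number formula first for all weights at once by linearity, and then specialize to the characters $\epsilon_j$. Global generation is a separate and standard statement. For $\lambda \in \mathsf{P}^+$, Borel--Weil gives $H^0(G/B, \mathcal{L}_\lambda) \simeq V_\lambda^\vee \neq 0$. The section given by pairing with a highest weight vector does not vanish at the base point $eB$. Since $\mathcal{L}_\lambda$ is $G$-equivariant and $G$ acts transitively on $G/B$, translating this section by $G$ shows the global sections generate every fiber.

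For the Chern numbers, I would first check that $\lambda \mapsto \mathcal{L}_\lambda$ is a homomorphism $\mathsf{P} \to \mathrm{Pic}(G/B)$. This holds because $\mathcal{L}_{\lambda+\mu} \simeq \mathcal{L}_\lambda \otimes \mathcal{L}_\mu$ directly from the definition via characters of $B$, using the convention $(g,v)*b=(gb,\lambda(b)v)$. Hence $\lambda \mapsto \langle c_1(\mathcal{L}_\lambda),\alpha\rangle$ is additive in $\lambda$ for every $\lambda \in \mathsf{P}$, not only dominant ones. It is also additive in $\alpha$, so with $\alpha = \sum_i a_i [X_{s_i}]$ it suffices to show
$$
\langle c_1(\mathcal{L}_{\lambda}), [X_{s_i}] \rangle = \langle h_i, \lambda \rangle .
$$
To compute this, I would identify $X_{s_i} = P_i/B \simeq \mathrm{SL}_2/B_{\mathrm{SL}_2} \simeq \CP^1$, where $P_i$ is the minimal parabolic subgroup. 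Restricting $\mathcal{L}_\lambda$ to $X_{s_i}$ gives the line bundle on $\CP^1$ attached to the character $\lambda$ restricted to the torus $\alpha_i^\vee(\C^*)$, namely $t \mapsto t^{\langle h_i,\lambda\rangle}$. An explicit $\mathrm{SL}_2$ transition-function computation then shows it is $\mathcal{O}(\langle h_i,\lambda\rangle)$. Writing $\lambda = \sum_j \eta_j \varpi_j$, we have $\langle h_i,\lambda\rangle = \eta_i = \lambda_i-\lambda_{i+1}$, which gives the displayed formula.

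I expect the main obstacle to be the sign in the $\mathrm{SL}_2$ computation. With the convention in the paper, the degree should come out as $+\langle h_i,\lambda\rangle$ rather than $-\langle h_i,\lambda\rangle$. To confirm this I would cross-check against the fact, noted in Remark~\ref{remark_conventions}, that $\mathcal{L}_\lambda$ is ample for regular dominant $\lambda$, together with~\eqref{equ_c1w}. In particular $\mathcal{L}_{\varpi_i}$ is the pullback of $\mathcal{O}(1)$ from $\mathbb{P}(V_{\varpi_i})$, and it has degree $1$ on the line $X_{s_i}$ under the Pl\"ucker embedding.

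For the particular case, note that $\mathcal{L}_j = G/U \times_{\epsilon_j} V_j \simeq G \times_B \C$ is the bundle attached to the character $\epsilon_j$ of $B$, with the same convention. I would verify this identification by writing $G/U \times_{B/U} \C = G\times_B \C$. As a weight, $\epsilon_j = \varpi_j - \varpi_{j-1}$ (with $\varpi_0 = 0$), so in the notation $\lambda = \sum \lambda_k \epsilon_k$ we have $\lambda_j = 1$ and $\lambda_k = 0$ for $k \neq j$. Then $\lambda_i - \lambda_{i+1}$ equals $1$ for $i = j$, equals $-1$ for $i = j-1$, and vanishes otherwise. This gives $\langle c_1(\mathcal{L}_j),\alpha\rangle = a_j - a_{j-1}$. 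Here $\epsilon_j$ is not dominant, so this step uses the extension of the formula to all of $\mathsf{P}$ by linearity established above.
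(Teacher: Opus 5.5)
Your argument is correct. The paper gives no proof of its own beyond citing Brion, and your steps are the standard argument behind those propositions: additivity of $\lambda \mapsto c_1(\mathcal{L}_\lambda)$, reduction to the Schubert curves $X_{s_i} = P_i/B \simeq \CP^1$ via $\mathrm{SL}_2$, and global generation from one nonvanishing section together with $G$-equivariance and transitivity.

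You also handle two points the paper leaves implicit. The ``in particular'' clause concerns the characters $\epsilon_j = \varpi_j - \varpi_{j-1}$, which are not dominant for $j \geq 2$, so it genuinely needs the extension of the formula to all of $\mathsf{P}$ by linearity. It also needs the check that $G/U \times_{\epsilon_j} V_j \simeq G \times_B \C$ with the same sign convention as $\mathcal{L}_\lambda$.
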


We are ready to prove Theorem~\ref{theorem_maintheorem2}.

\begin{proof}[Proof of Theorem~\ref{theorem_maintheorem2}]
Recall that the flag variety $G/B$ is a smooth Fano variety and simply connected. Because $N$ is monotone, the pair $(G/B, N)$ is positive. Since minimal Chern number of $G/B$ is two,~\eqref{equ_mmst} holds for every homotopy class $\beta$ of Maslov index two. Since $G/B$ is compact,~\eqref{equ_mmcpt} also holds. Note that $\pi_2(G/U) \simeq \{0\}$. By Corollary~\ref{cor_diskposreg}, the disk potential of $L$ is 
$$
W_L (\mathbf{y}) = \sum_{\beta \in \pi_2(M,N)} n_\beta \, \mathbf{y}^{\partial \beta^\sharp}.
$$

Let $\beta\in\pi_2(G/B,N)$ be given and let $\mathbf{b}$ be the unique vector in~\eqref{equ_mathbbbb}. Applying Lemma~\ref{lemma_chernnumbercal} to the line bundle $\mathcal{L}_j$ and $\alpha$ in~\eqref{equ_alphaclass}, the boundary lifting lemma yields that
$$
\partial \beta^\sharp + \sum_{j=1}^m b_j \partial \beta_j^\sharp + \sum_{j=1}^{n} (a_j - a_{j-1}) \partial \eta_j = 0
$$
By~\eqref{equ_identificationofact}, we have
$$
\sum_{j=1}^n a_j \sigma_j = \sum_{j=1}^n a_j (\partial \eta_{j} - \partial \eta_{j+1}) =   \sum_{j=1}^{n} (a_j - a_{j-1}) \partial \eta_j.
$$
Therefore, 
\begin{equation}\label{equ_linearcomofpartialbeta}
\partial \beta^\sharp = - \sum_{j=1}^n a_j \sigma_j - \sum_{j=1}^m b_j \partial \beta_j^\sharp.
\end{equation}
Under the identification $\pi_1(L) \simeq \Z^{n+m}$ determined by~\eqref{equ_sigmabeta}, the class $\partial \beta^\sharp$ coincides with the image of $\beta$ under the boundary lifting map $\partial^\sharp$ in Definition~\ref{def_boundaryliftingmap}. Hence, $W_L$ is equal to~\eqref{equ_diskpoWLGU}.
\end{proof}

\section{GHKK superpotential via disk potential}\label{sec_GHKKstring}

The goal of this section is to recover the Gross--Hacking--Keel--Kontsevich (GHKK) superpotential of $G/U$ by lifting holomorphic disks from $G/B$.

To describe the GHKK superpotential and construct the relevant Lagrangian tori, we begin by reviewing various cluster algebras associated with a reduced expression of the longest element of the Weyl group $W \simeq \mathfrak{S}_{n+1}$ of $G = \mathrm{SL}_{n+1}(\C)$, following \cite{BFZ05, Williams13, GLS11, GY17, FO25}. 

Let $W \simeq \mathfrak{S}_{n+1}$ be its Weyl group. We denote by $w_0$ the longest element of $W$, that is,
$$
w_0 
\coloneqq 
\begin{pmatrix}
1 & 2 & \dots & n+1 \\
n+1 & n & \dots & 1
\end{pmatrix}. 
$$
Fix a reduced expression of $w_0 \colon$
$$
\underline{w}_0 = s_{i_1} s_{i_2} \cdots s_{i_m}
$$
where $m = n(n+1)/2$ is the length of $w_0$. Let
$$
\underline{I} \coloneqq \{ \underline{i} \mid i \in [n] \}
$$
and write $i_{\underline{i}}=i$ for each $\underline{i}\in\underline{I}$.

Set $\underline{I} \coloneqq \{ \underline{i} \mid i \in [n] \}$ and $i_{\underline{i}} \coloneqq i$ for each $\underline{i} \in \underline{I}$.
\begin{itemize}
\item For each $\underline{i} \in \underline{I} $, set $\underline{i}^+ \coloneqq \min \bigl(  \{ m+1 \} \cup \{ 1 \leq j \leq m \mid i_j = i \} \bigr)$ and
\item For each $k \in [m]$, set 
$
k^+ \coloneqq \min \bigl(  \{ m+1 \} \cup \{  k+1 \leq j \leq m \mid i_j = i_k \} \bigr)$. 
\end{itemize}
We then define the index sets
\begin{itemize}
\item $J \coloneqq [m], \, J_\mathrm{fz} \coloneqq \{ j \in J \mid j^+ = m+1 \}, \mbox{ and } J_\mathrm{uf} \coloneqq J \setminus J_\mathrm{fz}$ and
\item $\underline{J} \coloneqq \underline{I} \cup J, \underline{J}_\mathrm{fz} \coloneqq \underline{I} \cup J_\mathrm{fz}, \mbox{ and }  \underline{J}_\mathrm{uf} \coloneqq \underline{J} \setminus \underline{J}_\mathrm{fz} = J_\mathrm{uf}.$
\end{itemize}

We define two cluster algebras $\mathcal{A}$ and $\underline{\mathcal{A}}$. Using the Cartan matrix $\mathsf{A}$ of $\mathfrak{g}$, for each ${r \in \underline{J}_{\mathrm{uf}},\, s \in \underline{J}}$, define the integer $\varepsilon_{r,s}$ by
\begin{equation}\label{equ_GLSseed}
\varepsilon_{r,s} =
\begin{cases}
-1 & \text{if } r = s^+, \\
1 & \text{if } r^+ = s, \\
-a_{i_s,i_r} & \text{if } s < r < s^+ < r^+, \\
a_{i_s,i_r} & \text{if } r < s < r^+ < s^+, \\
0 & \text{otherwise.}
\end{cases}
\end{equation}

Take the initial seed 
$$
\mathsf{s}_0 \coloneqq (A_{\mathsf{s}_0}, \varepsilon_{\mathsf{s}_0}) \quad \mbox{ and } 
\quad \underline{\mathsf{s}}_0 \coloneqq (A_{\underline{\mathsf{s}}_0}, \varepsilon_{\underline{\mathsf{s}}_0})
$$ 
where $A_{\mathsf{s}_0} \coloneqq \{ A_{j} \mid j \in J \}$ and $A_{\underline{\mathsf{s}}_0} \coloneqq \{ A_{j} \mid j \in \underline{J} \}$ are algebraically independent variables and the extended exchange matrices $\varepsilon_{\mathsf{s}_0} \coloneqq (\varepsilon_{r,s})_{r \in J_{\mathrm{uf}},\, s \in J}$ and $\varepsilon_{\underline{\mathsf{s}}_0} \coloneqq (\varepsilon_{r,s})_{r \in \underline{J}_{\mathrm{uf}},\, s \in \underline{J}}$ given by~\eqref{equ_GLSseed}. For simplicity, we write $\mathsf{s}(\underline{w}_0)$ or simply $\underline{w}_0$, for the seed associated with the reduced expression $\underline{w}_0$.

\begin{definition}
The \emph{cluster algebra} $\mathcal{A}$ (resp. $\underline{\mathcal{A}}$) is the $\C$-subalgebra of $\C( A_{j} \mid j \in J )$ (resp. $\C( A_{j} \mid j \in \underline{J})$) generated by all cluster variables in seeds mutation equivalent to the initial seed $\mathsf{s}_0$ (resp. $\underline{\mathsf{s}}_0$).  

The \emph{upper cluster algebra}, denoted by $\mathcal{A}^\mathrm{up}$ (resp. $\underline{\mathcal{A}}^\mathrm{up}$), is defined by 
$$
\mathcal{A}^\mathrm{up} = \bigcap_{\mathsf{s}} \C[A^\pm_\mathsf{s}] \left(\mbox{resp.} \,\, \underline{\mathcal{A}}^\mathrm{up} = \bigcap_{\underline{\mathsf{s}}} \C[A^\pm_{\underline{\mathsf{s}}}]  \right).
$$ 
\end{definition}

The double Bruhat cell is defined by $G^{e, w_0} \coloneqq B^{-} \cap Bw_0B$ and the unipotent cell $U^-_{w_0} \coloneqq U^{-} \cap BwB$. We have open embeddings
$$
G^{e, w_0} \to G / U \mbox{ and } U^-_{w_0} \to G / B.
$$
By \cite{BFZ05, FO25}, the upper cluster algebra $\mathcal{A}^\mathrm{up}$ is isomorphic to the coordinate ring $\C[U^-_{w_0}]$ of the unipotent cell and the upper cluster algebra $\underline{\mathcal{A}}^\mathrm{up}$ is isomorphic to   the coordinate ring $\C[G^{e,w_0}]$ of the double Bruhat cell.

In \cite{Mag15}, Magee proved that the full FG conjecture holds for $G/U$. The full Fock--Goncharov conjecture \cite[Definition 0.6]{GHKK18} yields that the $\vartheta$-basis of the coordinate ring $\C[\underline{\mathcal{A}}]$ is parametrized by the tropical points of the dual cluster variety $\mathcal{X}$. Let ${\underline{\mathcal{A}}^c}$ be a partial compactification $\underline{\mathcal{A}}$ by allowing the frozen variables to take the value $0$. Every regular function on the partial compactification ${\underline{\mathcal{A}}^c}$ has a non-negative order along the divisor $V(A_j)$ for each $j \in J_\mathrm{fz}$. Let $\mathcal{X}$ be the cluster $\mathcal{X}$-variety dual to the cluster variety $\underline{\mathcal{A}}$. The superpotential is given by 
\begin{equation}\label{equ_Wtheta}
W \coloneqq \sum_{j \in \underline{J}_\mathrm{fz}} \vartheta_j
\end{equation}
where $\vartheta_j$ is the theta function corresponding to the frozen variable $A_j$. The tropical points parametrizing the regular functions on ${\underline{\mathcal{A}}^c}$ are the integral points lying in the intersection of the half spaces determined by the tropicalization of $W$. 

To obtain the explicit expressions of $W|_{\mathcal{X}_{\underline{w}_0}}$ in terms of cluster coordinates, we recall wiring diagrams.

\begin{definition}[{\cite[Section 5.1]{GP00}}]\label{definition_rigorous_path}
Let $\underline{w}_0 = s_{i_1}s_{i_2} \dots s_{i_m}$ be a reduced expression of the longest element $w_0$ of the Weyl group $W \simeq \mathfrak{S}_{n+1}$ where $m \coloneqq n(n+1)/2$. 

\begin{itemize}
\item The \emph{wiring diagram} ${G}_{\underline{w}_0}$ is an arrangement of $(n+1)$-vertical piecewise lines such that 
\begin{enumerate}
\item each pair of wires must intersect exactly once and 
\item for each $j \in [m]$, the $j$-th crossing of wires from top occurs in the $i_j$-th column from the left of ${G}_{\underline{w}_0}$ .
\end{enumerate}
\item The wiring diagram divides the plane into \emph{chambers}. It has $n(n-1)/2$ bounded chambers together with $n$ unbounded chambers at the bottom and $n$ unbounded chambers at the top. Let $L_k$ denote the $k$-th vertex on the bottom boundary from the left and let $\ell_k$ denote the $k$-th wire from the top.
\item For each $k \in [n+1]$, let $G_{\underline{w}_0}(k)$ denote the oriented wiring diagram obtained from $G_{\underline{w}_0}$ by orienting$\colon$
\begin{enumerate}
\item the first $k$ wires $\ell_1, \ldots, \ell_k$ upward, and 
\item the remaining wires $\ell_{k+1}, \ldots, \ell_{n+1}$ downward. 
\end{enumerate}
\item A \emph{rigorous path} is an oriented path on $G_{\underline{w}_0}(k)$ for some $k \in [n+1]$ satisfying
\begin{enumerate}
\item it starts at $L_k$ and ends at $L_{k+1}$, 
\item it respects the orientation of $G_{\underline{w}_0}(k)$,
\item it passes through each node at most once, and
\item it does \emph{not} include \emph{forbidden fragments} shown in Figure~\ref{figure_avoiding}.
\end{enumerate}
We denote by $\mathcal{P}_{\underline{w}_0}$ the set of all rigorous paths. For each $k \in [n+1]$, we denote by $\mathcal{P}_{\underline{w}_0}(k)$ the set of rigorous paths in $G_{\underline{w}_0}(k)$.
		
\begin{figure}[h]
\begin{center}
\begin{tikzpicture}
\tikzset{red line/.style = {line width=0.5ex, red}}
\draw[->] (0,1)--(1,0);
\draw[red line, ->] (1,1)--(0,0);
\begin{scope}[xshift = 2.5cm]
\draw[->] (0,0)--(1,1);
\draw[->, red line] (1,0)--(0,1);
\end{scope}					
\end{tikzpicture}	
\end{center}
\caption{\label{figure_avoiding} Forbidden fragments.}
\end{figure}
\item A \emph{descending array of chambers} is a sequence of chambers contained in a single column, beginning with a top chamber and proceeding downward through the successive bounded chambers. The sequence must terminate at either a top chamber or a bounded chamber in that column. We denote by $\mathcal{Q}_{\underline{w}_0}$ the set of descending arrays of chambers and by $\mathcal{Q}_{\underline{w}_0}(i)$ the subset consisting of descending arrays in the $i$-th column from the left.
\item An \emph{ascending array of chambers} is a sequence of chambers contained in a single column, beginning with a bottom chamber and proceeding upward through the successive bounded chambers. The sequence must terminate at either a bottom chamber or a bounded chamber in that column. We denote by $\mathcal{R}_{\underline{w}_0}$ the set of ascending arrays of chambers and by $\mathcal{R}_{\underline{w}_0}(i)$ the subset consisting of ascending arrays of chambers in the $i$-th column from the left.
\item For each chamber indexed by $j \in \underline{J}$, we associate a \emph{chamber variable} $u_j$. More precisely,
\begin{enumerate}
\item for each $\underline{j} \in \underline{I}$, the variable $u_{\underline{j}}$ is assigned to the top unbounded chamber between the wires $\ell_j$ and $\ell_{j+1}$
\item for each $j \in J$, the variable $u_j$ is assigned to the chamber immediately below the crossing corresponding to $s_{i_j}$.
\end{enumerate}
\end{itemize}
\end{definition}

\begin{example}\label{example_twored}
Let $G = \mathrm{SL}_{4}(\C)$. As examples, consider two reduced expressions 
$$
\underline{w}_0 = s_1 s_2 s_1 s_3 s_2 s_1 \, \mbox{ and } \, \underline{w}_{0^\prime} = s_2 s_1 s_3 s_2 s_1 s_3
$$ 
of the longest element $w_0$. The wiring diagrams $G_{\underline{w}_0}$ and $G_{\underline{w}_{0^\prime}}$ are depicted in Figure~\ref{figure_wiringoriented} together with the oriented wiring diagrams $G_{\underline{w}_0}(1)$ and $G_{\underline{w}_{0^\prime}}(2)$.
\begin{figure}[h]
\centering
\begin{tikzpicture}[scale=0.8, every node/.style={font=\small}]

\begin{scope}
\node at (0, -0.4) {$L_1$};
\node at (1, -0.4) {$L_2$};
\node at (2, -0.4) {$L_3$};
\node at (3, -0.4) {$L_4$};

\node at (0, 0)  {$\bullet$};
\node at (1, 0)  {$\bullet$};
\node at (2, 0)  {$\bullet$};
\node at (3, 0)  {$\bullet$};

\node at (0, 4+0.3) {$\ell_1$};
\node at (1, 4+0.3) {$\ell_2$};
\node at (2, 4+0.3) {$\ell_3$};
\node at (3, 4+0.3) {$\ell_4$};

\draw (3,0)  -- (3,1.5) -- (2,2) -- (2,2.5) -- (0,3.5) -- (0,4);
\draw (2,0) -- (2,1) -- (1,1.5) -- (1,2) -- (0,2.5) -- (0,3) -- (1,3.5) -- (1,4);
\draw (1,0) -- (1,0.5) -- (0,2.5-1.5) -- (0,2) -- (2,3) -- (2,4);
\draw (0,0) -- (0,0.5) -- (3,3.5-1.5) -- (3,4);

\node at (0.5,3.75) {$u_{\underline{1}}$};
\node at (1.5,3.25) {$u_{\underline{2}}$};
\node at (2.5,2.75) {$u_{\underline{3}}$};
\node at (0.5,2.75) {$u_1$};
\node at (0.5,1.5) {$u_3$};
\node at (0.5,0.25) {$u_6$};
\node at (1.5,2.0) {$u_2$};
\node at (1.5,0.5) {$u_5$};
\node at (2.5,1) {$u_4$};

\node at (1.5,-1) {$(1)\; G_{\underline{w}_0}$};
\end{scope}

\begin{scope}[xshift=5cm]
\node at (0.5,3.75) {$u_{\underline{1}}$};
\node at (1.5,3.25) {$u_{\underline{2}}$};
\node at (2.5,2.75) {$u_{\underline{3}}$};

\node at (0, 0)  {$\bullet$};
\node at (1, 0)  {$\bullet$};
\node at (2, 0)  {$\bullet$};
\node at (3, 0)  {$\bullet$};

\node at (0, 4+0.3) {$\ell_1$};
\node at (1, 4+0.3) {$\ell_2$};
\node at (2, 4+0.3) {$\ell_3$};
\node at (3, 4+0.3) {$\ell_4$};

\draw (3,0)  -- (3,1.5) -- (2,2) -- (2,2.5) -- (0,3.5) -- (0,4);
\draw (2,0) -- (2,1) -- (1,1.5) -- (1,2) -- (0,2.5) -- (0,3) -- (1,3.5) -- (1,4);
\draw (1,0) -- (1,0.5) -- (0,2.5-1.5) -- (0,2) -- (2,3) -- (2,4);
\draw (0,0) -- (0,0.5) -- (3,3.5-1.5) -- (3,4);

\draw[blue, thick, ->] (3,0)  -- (3,1.5) -- (2,2) -- (2,2.5) -- (0,3.5) -- (0,4);
\draw[blue, thick, <-] (2,0.1) -- (2,1) -- (1,1.5) -- (1,2) -- (0,2.5) -- (0,3) -- (1,3.5) -- (1,4);
\draw[blue, thick, <-] (1,0.1) -- (1,0.5) -- (0,2.5-1.5) -- (0,2) -- (2,3) -- (2,4);
\draw[blue, thick, <-] (0,0.1) -- (0,0.5) -- (3,3.5-1.5) -- (3,4);

\node at (0, -0.4) {$L_1$};
\node at (1, -0.4) {$L_2$};
\node at (2, -0.4) {$L_3$};
\node at (3, -0.4) {$L_4$};

\node at (0.5,2.75) {$u_1$};
\node at (0.5,1.5) {$u_3$};
\node at (0.5,0.25) {$u_6$};
\node at (1.5,2.0) {$u_2$};
\node at (1.5,0.5) {$u_5$};
\node at (2.5,1) {$u_4$};

\node at (1.5,-1) {$(1^\prime)\; G_{\underline{w}_0}(1)$};
\end{scope}

\begin{scope}[xshift=10cm]
\node at (0.5,3.25) {$u_{\underline{1}}$};
\node at (1.5,3.75) {$u_{\underline{2}}$};
\node at (2.5,2.75) {$u_{\underline{3}}$};

\node at (0, -0.4) {$L_1$};
\node at (1, -0.4) {$L_2$};
\node at (2, -0.4) {$L_3$};
\node at (3, -0.4) {$L_4$};

\node at (0, 0)  {$\bullet$};
\node at (1, 0)  {$\bullet$};
\node at (2, 0)  {$\bullet$};
\node at (3, 0)  {$\bullet$};
\node at (0, 4+0.3) {$\ell_1$};
\node at (1, 4+0.3) {$\ell_2$};
\node at (2, 4+0.3) {$\ell_3$};
\node at (3, 4+0.3) {$\ell_4$};

\draw (0,0) -- (0,1) -- (1,1.5) -- (3,2.5) -- (3,4) ;
\draw (1,0) -- (1,1) -- (0,1.5) -- (0,2.5) -- (2,3.5) -- (2,4);
\draw (2,0) -- (2,0.5) -- (3,1) -- (3,1.5) -- (3,2) -- (2,2.5) -- (2,3) -- (1,3.5) -- (1,4);
\draw (3,0) -- (3,0.5) -- (2,1) -- (2,1.5) -- (1,2) -- (1,2.5) -- (0,3) -- (0,4);

\node at (2.5,0+0.2) {$u_6$};
\node at (0.5,1.75+0.2) {$u_2$};
\node at (0.5,0.25+0.2) {$u_5$};
\node at (1.5,2.25+0.2) {$u_1$};
\node at (1.5,0.75+0.2) {$u_4$};
\node at (2.5,1.25+0.2) {$u_3$};

\node at (1.5,-1) {$(2)\; G_{\underline{w}^\prime_0}$};
\end{scope}

\begin{scope}[xshift=15cm]
\node at (0.5,3.25) {$u_{\underline{1}}$};
\node at (1.5,3.75) {$u_{\underline{2}}$};
\node at (2.5,2.75) {$u_{\underline{3}}$};

\node at (0, 4+0.3) {$\ell_1$};
\node at (1, 4+0.3) {$\ell_2$};
\node at (2, 4+0.3) {$\ell_3$};
\node at (3, 4+0.3) {$\ell_4$};

\draw (0,0) -- (0,1) -- (1,1.5) -- (3,2.5) -- (3,4) ;
\draw (1,0) -- (1,1) -- (0,1.5) -- (0,2.5) -- (2,3.5) -- (2,4);
\draw (2,0) -- (2,0.5) -- (3,1) -- (3,1.5) -- (3,2) -- (2,2.5) -- (2,3) -- (1,3.5) -- (1,4);
\draw (3,0) -- (3,0.5) -- (2,1) -- (2,1.5) -- (1,2) -- (1,2.5) -- (0,3) -- (0,4);

\draw[blue, thick, ->] (3,0) -- (3,0.5) -- (2,1) -- (2,1.5) -- (1,2) -- (1,2.5) -- (0,3) -- (0,4);
\draw[blue, thick, ->] (2,0) -- (2,0.5) -- (3,1) -- (3,1.5) -- (3,2) -- (2,2.5) -- (2,3) -- (1,3.5) -- (1,4);
\draw[blue, thick, <-] (1,0.1) -- (1,1) -- (0,1.5) -- (0,2.5) -- (2,3.5) -- (2,4);
\draw[blue, thick, <-] (0,0.1) -- (0,1) -- (1,1.5) -- (3,2.5) -- (3,4) ;

\node at (0, -0.4) {$L_1$};
\node at (1, -0.4) {$L_2$};
\node at (2, -0.4) {$L_3$};
\node at (3, -0.4) {$L_4$};

\node at (0, 0)  {$\bullet$};
\node at (1, 0)  {$\bullet$};
\node at (2, 0)  {$\bullet$};
\node at (3, 0)  {$\bullet$};

\node at (2.5,0+0.2) {$u_6$};
\node at (0.5,1.75+0.2) {$u_2$};
\node at (0.5,0.25+0.2) {$u_5$};
\node at (1.5,2.25+0.2) {$u_1$};
\node at (1.5,0.75+0.2) {$u_4$};
\node at (2.5,1.25+0.2) {$u_3$};

\node at (1.5,-1) {$(2^\prime)\; G_{\underline{w}^\prime_0}(2)$};
\end{scope}
\end{tikzpicture}
\caption{\label{figure_wiringoriented} Wiring diagrams and oriented wiring diagrams.}
\end{figure}
\end{example}

Magee and Bossinger--Fourier gave an explicit expression for the GHKK superpotential restricted to the $\mathcal{X}$-cluster chart associated with $\mathsf{s}(\underline{w}_0) = \underline{w}_0$. We briefly recall their construction. 

Consider the Euclidean space $\{(\underline{\mathbf{u}}, {\mathbf{u}})\} \simeq \R^n \times \R^m$.
\begin{enumerate}
\item For each rigorous path $\mathsf{p} \in \mathcal{P}_{\underline{w}_0}$, let $h_{\mathsf{p}}(\mathbf{u})$ denote the sum of the chamber variables corresponding to the chambers enclosed by $\mathsf{p}$. Let $\mathbf{v}_\mathsf{p}$ be the primitive inward  normal vector to the half space
\begin{equation}\label{equ_stringincone}
h_{\mathsf{p}}(\mathbf{u}) \geq 0.
\end{equation}
\item For each descending array of chambers $\mathsf{q}\in\mathcal{Q}_{\underline{w}_0}$, let $h_{\mathsf{q}}(\underline{\mathbf{u}}, {\mathbf{u}})$ denote the sum of the chamber variables corresponding to the chambers in the array. Let $\mathbf{v}_\mathsf{q}$ be the primitive inward normal vector to the half space
$$
h_{\mathsf{q}}(\underline{\mathbf{u}}, {\mathbf{u}}) \geq 0.
$$
\end{enumerate}

\begin{theorem}[Corollary 24 in \cite{Mag15} and Theorem 6 in \cite{BF19}]
The GHKK superpotential for $G/U$ restricted to the cluster chart $\mathcal{X}_{\mathsf{s}(\underline{w}_0)} \simeq (\C^*)^{n+m}$ is equal to
\begin{equation}\label{equ_GHKKssupter}
W_{\underline{w}_0}(\mathbf{x}) = \sum_{\mathsf{p} \in \mathcal{P}_{\underline{w}_0}} \mathbf{x}^{-\mathbf{v}_\mathsf{p}} +  \sum_{\mathsf{q} \in \mathcal{Q}_{\underline{w}_0}} \mathbf{x}^{-\mathbf{v}_\mathsf{q}} 
\end{equation}
\end{theorem}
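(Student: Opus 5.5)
The plan is to compute each theta function $\vartheta_j$, $j \in \underline{J}_\mathrm{fz}$, separately on the chart $\mathcal{X}_{\mathsf{s}(\underline{w}_0)}$ and then sum. The basic tool is Magee's optimized-seed principle. Suppose $\mathsf{s}'$ is a seed in which the frozen index $j$ is \emph{optimized}, meaning the $j$-th column of the extended exchange matrix of $\mathsf{s}'$ has no negative entries (in the sign convention of \cite{GHKK18}). Then every broken line with initial monomial $\mathbf{x}^{e_j}$ is straight, so
$$
\vartheta_j|_{\mathcal{X}_{\mathsf{s}'}} = \mathbf{x}^{e_j}
$$
is a single cluster monomial.

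Given such a seed, $\vartheta_j|_{\mathcal{X}_{\mathsf{s}(\underline{w}_0)}}$ is the pullback of this monomial under the composite of $\mathcal{X}$-cluster mutations relating $\mathsf{s}'$ and $\mathsf{s}(\underline{w}_0)$. Since each $\vartheta_j$ is a universally positive Laurent polynomial, the resulting expression has positive integer coefficients. The frozen indices split into two families:
\begin{itemize}
\item $J_\mathrm{fz}$, which corresponds to the columns $i \in [n]$ via the last occurrence of $s_i$ in $\underline{w}_0$;
\item $\underline{I}$, which corresponds to the top unbounded chambers.
\end{itemize}
I expect these to produce the rigorous-path sum and the descending-array sum respectively.

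The key steps are as follows.

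First, for each $\underline{i} \in \underline{I}$, I would check directly from \eqref{equ_GLSseed} that $\underline{i}$ is already optimized after a short sequence of mutations along column $i$ of the wiring diagram. These are the mutations at the successive chambers below the top chamber $u_{\underline{i}}$. At each stage, the $\mathcal{X}$-mutation formula $x_k \mapsto x_k(1+x_k^{\pm1})^{\pm1}$ adds exactly one new monomial, namely the one corresponding to extending the descending array by one more chamber. Unwinding the mutations gives
$$
\vartheta_{\underline{i}}|_{\mathcal{X}_{\underline{w}_0}} = \sum_{\mathsf{q} \in \mathcal{Q}_{\underline{w}_0}(i)} \mathbf{x}^{-\mathbf{v}_\mathsf{q}}.
$$

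Second, for the frozen indices in $J_\mathrm{fz}$, I would use braid moves to reach a reduced expression adapted to column $k$, for instance one ending with $s_k s_{k+1}\cdots$. Braid moves on $\underline{w}_0$ are realized by single mutations of the seed, and in the adapted expression the corresponding frozen index is optimized. Transporting the monomial back produces a Laurent polynomial. I would identify this Laurent polynomial with the Berenstein--Kazhdan decoration $\chi_k$ written in the chamber-ansatz coordinates. By Gleizer--Postnikov \cite{GP00}, the terms of $\chi_k$ are indexed by rigorous paths in $G_{\underline{w}_0}(k)$, and each term has exponent $-\mathbf{v}_\mathsf{p}$.

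Third, I would confirm that all coefficients equal $1$. For this I would compare tropicalizations: the tropicalized sum must cut out exactly the cone of the $\vartheta$-basis of $\C[G/U]=\bigoplus_\lambda V_\lambda^\vee$. This cone is the string cone $h_\mathsf{p}\ge 0$ intersected with the $\lambda$-inequalities $h_\mathsf{q}\ge0$ (Littelmann, Berenstein--Zelevinsky). The inequalities are pairwise irredundant facets, so no monomial can be missing. Combined with the positivity of theta functions and a computation of the leading coefficient along each facet, this forces every coefficient to be $1$.

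The main obstacle is the second step: controlling the mutation sequence and its pullback in general, rather than only in examples. I would first try to avoid explicit mutation by showing that both sides are invariant under the braid-move mutations. The GHKK side is invariant by construction. The path side should be invariant by the Gleizer--Postnikov local move rule for rigorous paths under $s_is_{i+1}s_i \leftrightarrow s_{i+1}s_is_{i+1}$ and the commutation rule. It would then suffice to verify the formula for a single reduced expression, such as the standard $s_1 s_2 s_1 s_3 s_2 s_1 \cdots$, where both sides can be computed column by column.
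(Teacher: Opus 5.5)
\textbf{Context.} The paper does not prove this statement. It quotes it from Magee (optimized seeds and an explicit computation for one seed) and Bossinger--Fourier (extension to every reduced expression). Your outline follows essentially that route, and your treatment of the $\underline{I}$-terms by mutating down column $i$ is plausible.

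\textbf{The gap: pinning down the Laurent polynomial exactly.} Your third step cannot do this. For $F=\sum_v c_v\mathbf{x}^{v}$ with $c_v>0$, the tropicalization $F^T(\mathbf{u})=\min_{c_v\neq 0}\langle v,\mathbf{u}\rangle$ depends only on the support of $F$. The cone $\{F^T\ge 0\}$ depends only on the cone spanned by that support. Matching cones therefore tells you at most that some positive multiple of each facet normal occurs as an exponent. It cannot rule out extra monomials whose exponents lie in the cone spanned by the $-\mathbf{v}_\mathsf{p},-\mathbf{v}_\mathsf{q}$, since these only give redundant inequalities. It also gives no upper bound on coefficients: positivity only yields $c_v\ge 1$ on the support, and tropicalization computes no ``leading coefficient along a facet.''

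\textbf{The cone input is also unsupported.} You assume that $\{W^T\ge 0\}$ in the chart $\mathcal{X}_{\mathsf{s}(\underline{w}_0)}$ equals the string cone together with the descending-array inequalities. This does not come from Littelmann or Berenstein--Zelevinsky. They describe the string parametrization of the (dual) canonical basis, not the tropical points of this particular chart that index the $\vartheta$-basis. Identifying the two cones is the tropical shadow of the formula you are trying to prove, so using it here is circular.

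\textbf{The second step and the braid-move fallback are asserted, not proved.} Everything therefore rests on these, and the decisive claim is missing. The Gleizer--Postnikov local rules act on inequalities, i.e.\ piecewise-linearly, whereas what you need is an identity of rational functions.
\begin{itemize}
\item Under a $3$-term braid move, the $\mathcal{X}$-mutation at the chamber $k$ enclosed by the three crossings sends $\mathbf{x}^{-\mathbf{v}}$ to a monomial times $(1+x_k^{\pm1})^{c(\mathbf{v})}$. Here $c(\mathbf{v})$ is linear in $\mathbf{v}$ and determined by the row of the exchange matrix \eqref{equ_GLSseed} indexed by $k$.
\item Monomials split when $c>0$ and must recombine with others when $c<0$, so there is no term-by-term invariance. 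Indeed, in Example~\ref{example_GHKKsuperpo} the chart of $\underline{w}_0$ carries $12$ monomials while that of $\underline{w}_{0^\prime}$ carries $13$.
\item You would have to show that the rigorous paths group into families whose sums transform exactly under this substitution. That is the combinatorial core of Bossinger--Fourier.
\item The base case for one word also has to be computed in full, which is Magee's contribution.
\end{itemize}
Likewise, you only assert that the transported monomial equals the Berenstein--Kazhdan decoration in these cluster coordinates. You also assert that Gleizer--Postnikov index its monomials, each with coefficient one. In fact they describe only the facets of the string cone.

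\textbf{A smaller slip.} By \eqref{equ_GLSseed}, the frozen index of the last $s_k$ is optimized when no $s_{k\pm1}$ follows it, for example when the word ends in $s_k$. A word ending in $s_k s_{k+1}\cdots$ typically puts a $-1$ in that column, so it is not optimized.
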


\begin{example}\label{example_GHKKsuperpo}
Let $\underline{w}_0$ and $\underline{w}_{0^\prime}$ be the two reduced expressions of the longest element $w_0$ of $\mathfrak{S}_4$ in Example~\ref{example_twored}. Consider the rigorous paths $\mathsf{p}\in\mathcal{P}_{\underline{w}_0}$, $\mathsf{p}^\prime \in \mathcal{P}_{\underline{w}_{0^\prime}}$ and the descending arrays $\mathsf{q} \in \mathcal{Q}_{\underline{w}_0}(1)$, $\mathsf{q}^\prime \in \mathcal{Q}_{\underline{w}_{0^\prime}}(2)$ shown in Figure~\ref{figure_arraypath} as examples. The corresponding defining linear functions are
$$
h_{\mathsf{p}}(\mathbf{u}) =u_2 + u_4, \,\, h_{\mathsf{p}^\prime}(\mathbf{u}) = u_2 + u_3 + u_4, \,\, h_{\mathsf{q}}(\underline{\mathbf{u}}, \mathbf{u}) = u_{\underline{1}} + u_1 + u_3,   \,\, h_{\mathsf{q}^\prime}(\underline{\mathbf{u}}, \mathbf{u}) = u_{\underline{2}} + u_1. 
$$
Then the GHKK superpotentials are
{\small
\begin{align*}
&W_{\underline{w}_0}(\mathbf{x}) = \frac{1}{x_1x_2x_4} + \frac{1}{x_2x_4} + \frac{1}{x_4} + \frac{1}{x_3x_5} + \frac{1}{x_5} + \frac{1}{x_6} + \frac{1}{x_{\underline{1}}} + \frac{1}{x_{\underline{1}}x_1} + \frac{1}{x_{\underline{1}}x_1x_3} +  \frac{1}{x_{\underline{2}}} + \frac{1}{x_{\underline{2}}x_2} +  \frac{1}{x_{\underline{3}}}
\\ 
&W_{\underline{w}_{0^\prime}}(\mathbf{x}) = \frac{1}{x_1x_2x_3x_4}  + \frac{1}{x_2x_3x_4} + \frac{1}{x_2x_4} + \frac{1}{x_3x_4} + \frac{1}{x_4} + \frac{1}{x_5} + \frac{1}{x_6} + \frac{1}{x_{\underline{1}}} + \frac{1}{x_{\underline{1}}x_2} +  \frac{1}{x_{\underline{2}}} +  \frac{1}{x_{\underline{2}}x_1} + \frac{1}{x_{\underline{3}}} +  \frac{1}{x_{\underline{3}}x_3}.
\end{align*}
}
\begin{figure}[h]
\centering
\begin{tikzpicture}[scale=0.8, every node/.style={font=\small}]

\begin{scope}
\node at (0, -0.4) {$L_1$};
\node at (1, -0.4) {$L_2$};
\node at (2, -0.4) {$L_3$};
\node at (3, -0.4) {$L_4$};

\node at (0, 0)  {$\bullet$};
\node at (1, 0)  {$\bullet$};
\node at (2, 0)  {$\bullet$};
\node at (3, 0)  {$\bullet$};

\node at (0, 4+0.3) {$\ell_1$};
\node at (1, 4+0.3) {$\ell_2$};
\node at (2, 4+0.3) {$\ell_3$};
\node at (3, 4+0.3) {$\ell_4$};

\draw (3,0)  -- (3,1.5) -- (2,2) -- (2,2.5) -- (0,3.5) -- (0,4);
\draw (2,0) -- (2,1) -- (1,1.5) -- (1,2) -- (0,2.5) -- (0,3) -- (1,3.5) -- (1,4);
\draw (1,0) -- (1,0.5) -- (0,2.5-1.5) -- (0,2) -- (2,3) -- (2,4);
\draw (0,0) -- (0,0.5) -- (3,3.5-1.5) -- (3,4);
\draw[blue, thick, ->] (3,0)  -- (3,1.5) -- (2,2) -- (2,2.5) -- (0,3.5) -- (0,4);
\draw[red, semitransparent, line width=0.5ex, ->] (3,0)--(3,1.5)--(2,2)--(2,2.5)--(1.5,2.75)--(0.5,2.25)--(1,2)--(1,1.5)--(2,1)--(2,0);

\node at (0.5,3.75) {$u_{\underline{1}}$};
\node at (1.5,3.25) {$u_{\underline{2}}$};
\node at (2.5,2.75) {$u_{\underline{3}}$};
\node at (0.5,2.75) {$u_1$};
\node at (0.5,1.5) {$u_3$};
\node at (0.5,0.25) {$u_6$};
\node at (1.5,2.0) {$u_2$};
\node at (1.5,0.5) {$u_5$};
\node at (2.5,1) {$u_4$};

\node at (1.5,-1) {$(1)$ $\mathsf{p} \in \mathcal{P}_{\underline{w}_0}$};
\end{scope}

\begin{scope}[xshift=5cm]
\node at (0, 0)  {$\bullet$};
\node at (1, 0)  {$\bullet$};
\node at (2, 0)  {$\bullet$};
\node at (3, 0)  {$\bullet$};

\node at (0, 4+0.3) {$\ell_1$};
\node at (1, 4+0.3) {$\ell_2$};
\node at (2, 4+0.3) {$\ell_3$};
\node at (3, 4+0.3) {$\ell_4$};

\draw (3,0)  -- (3,1.5) -- (2,2) -- (2,2.5) -- (0,3.5) -- (0,4);
\draw (2,0) -- (2,1) -- (1,1.5) -- (1,2) -- (0,2.5) -- (0,3) -- (1,3.5) -- (1,4);
\draw (1,0) -- (1,0.5) -- (0,2.5-1.5) -- (0,2) -- (2,3) -- (2,4);
\draw (0,0) -- (0,0.5) -- (3,3.5-1.5) -- (3,4);

\draw[red, semitransparent, line width=0.5ex, -] (0,4)--(0,3.5)--(1.5,2.75)--(0.5,2.25) -- (1,2) -- (1,1.5) -- (1.5,1.25) -- (0.5,0.75) -- (0,1) -- (0, 2) -- (0.5, 2.25) 
--(0,2.5) --(0,3) -- (1,3.5) -- (1,4);

\node at (0, -0.4) {$L_1$};
\node at (1, -0.4) {$L_2$};
\node at (2, -0.4) {$L_3$};
\node at (3, -0.4) {$L_4$};

\node at (0.5,3.75) {$u_{\underline{1}}$};
\node at (1.5,3.25) {$u_{\underline{2}}$};
\node at (2.5,2.75) {$u_{\underline{3}}$};

\node at (0.5,2.75) {$u_1$};
\node at (0.5,1.5) {$u_3$};
\node at (0.5,0.25) {$u_6$};
\node at (1.5,2.0) {$u_2$};
\node at (1.5,0.5) {$u_5$};
\node at (2.5,1) {$u_4$};

\node at (1.5,-1) {$(1^\prime)$ $\mathsf{q} \in \mathcal{Q}_{\underline{w}_0}$};
\end{scope}

\begin{scope}[xshift=10cm]
\node at (0.5,3.25) {$u_{\underline{1}}$};
\node at (1.5,3.75) {$u_{\underline{2}}$};
\node at (2.5,2.75) {$u_{\underline{3}}$};

\node at (0, -0.4) {$L_1$};
\node at (1, -0.4) {$L_2$};
\node at (2, -0.4) {$L_3$};
\node at (3, -0.4) {$L_4$};

\node at (0, 0)  {$\bullet$};
\node at (1, 0)  {$\bullet$};
\node at (2, 0)  {$\bullet$};
\node at (3, 0)  {$\bullet$};
\node at (0, 4+0.3) {$\ell_1$};
\node at (1, 4+0.3) {$\ell_2$};
\node at (2, 4+0.3) {$\ell_3$};
\node at (3, 4+0.3) {$\ell_4$};

\draw (0,0) -- (0,1) -- (1,1.5) -- (3,2.5) -- (3,4) ;
\draw (1,0) -- (1,1) -- (0,1.5) -- (0,2.5) -- (2,3.5) -- (2,4);
\draw (2,0) -- (2,0.5) -- (3,1) -- (3,1.5) -- (3,2) -- (2,2.5) -- (2,3) -- (1,3.5) -- (1,4);
\draw (3,0) -- (3,0.5) -- (2,1) -- (2,1.5) -- (1,2) -- (1,2.5) -- (0,3) -- (0,4);

\draw[blue, thick, ->] (3,0) -- (3,0.5) -- (2,1) -- (2,1.5) -- (1,2) -- (1,2.5) -- (0,3) -- (0,4);
\draw[blue, thick, ->] (2,0) -- (2,0.5) -- (3,1) -- (3,1.5) -- (3,2) -- (2,2.5) -- (2,3) -- (1,3.5) -- (1,4);
\draw[red, semitransparent, line width=0.5ex, ->] (2,0) -- (2,0.5) -- (3,1) -- (3,1.5) -- (3,2) -- (2.5,2.25) -- (1.5,1.75) -- (1,2) -- (1,2.5) -- (0.5,2.75) -- (0,2.5) -- (0,1.5) -- (1,1) -- (1,0);

\node at (2.5,0+0.2) {$u_6$};
\node at (0.5,1.75+0.2) {$u_2$};
\node at (0.5,0.25+0.2) {$u_5$};
\node at (1.5,2.25+0.2) {$u_1$};
\node at (1.5,0.75+0.2) {$u_4$};
\node at (2.5,1.25+0.2) {$u_3$};

\node at (1.5,-1) {$(2)$ $\mathsf{p}^\prime \in \mathcal{P}_{\underline{w}^\prime_0}$};

\end{scope}

\begin{scope}[xshift=15cm]
\node at (0, 4+0.3) {$\ell_1$};
\node at (1, 4+0.3) {$\ell_2$};
\node at (2, 4+0.3) {$\ell_3$};
\node at (3, 4+0.3) {$\ell_4$};

\draw (0,0) -- (0,1) -- (1,1.5) -- (3,2.5) -- (3,4) ;
\draw (1,0) -- (1,1) -- (0,1.5) -- (0,2.5) -- (2,3.5) -- (2,4);
\draw (2,0) -- (2,0.5) -- (3,1) -- (3,1.5) -- (3,2) -- (2,2.5) -- (2,3) -- (1,3.5) -- (1,4);
\draw (3,0) -- (3,0.5) -- (2,1) -- (2,1.5) -- (1,2) -- (1,2.5) -- (0,3) -- (0,4);

\draw[red, semitransparent, line width=0.5ex, -] (1,4) -- (1,3.5) -- (2,3) -- (2,2.5) -- (2.5,2.25) -- (1.5,1.75) -- (1,2) -- (1,2.5) -- (0.5,2.75) -- (2,3.5) -- (2,4);

\node at (0, -0.4) {$L_1$};
\node at (1, -0.4) {$L_2$};
\node at (2, -0.4) {$L_3$};
\node at (3, -0.4) {$L_4$};

\node at (0, 0)  {$\bullet$};
\node at (1, 0)  {$\bullet$};
\node at (2, 0)  {$\bullet$};
\node at (3, 0)  {$\bullet$};

\node at (0.5,3.25) {$u_{\underline{1}}$};
\node at (1.5,3.75) {$u_{\underline{2}}$};
\node at (2.5,2.75) {$u_{\underline{3}}$};

\node at (2.5,0+0.2) {$u_6$};
\node at (0.5,1.75+0.2) {$u_2$};
\node at (0.5,0.25+0.2) {$u_5$};
\node at (1.5,2.25+0.2) {$u_1$};
\node at (1.5,0.75+0.2) {$u_4$};
\node at (2.5,1.25+0.2) {$u_3$};
\node at (1.5,-1) {$(2^\prime)$ $\mathsf{q}^\prime \in \mathcal{Q}_{\underline{w}^\prime_0}$};
\end{scope}
\end{tikzpicture}
\caption{\label{figure_arraypath} Rigorous paths and descending arrways}
\end{figure}
\end{example}

Next, we show that these superpotentials are indeed the disk potentials, as conjectured in \cite{GHKK18}, by applying the framework developed in Section~\ref{sec_diskpotentialbasicaffinespaces}. For this purpose, we construct a family of Lagrangian tori whose mirror superpotentials coincide with $W$ above. 

Take $\lambda$ to be twice the sum of fundamental weights so that the corresponding homogeneous line bundle $\mathcal{L}_\lambda$ on $G/B$ is the anticanonical line bundle. To each seed $\mathsf{s}$ of the cluster algebra $\mathcal{A}$, one associates a Newton--Okounkov body $\Delta^\lambda_{\mathsf{s}}$ of $G/B$ with the choice of the line bundle $\mathcal{L}_\lambda$, see  \cite{FO25}. Using the resulting toric degenerations and the completely integrable system constructed in \cite{And13, HK15}, a monotone Lagrangian torus associated to $\mathsf{s}$ was constructed in \cite{CKKP25}. We denote this family by
$$
\{ N_\mathsf{s} \mid \mathsf{s} \mbox{ is a seed for $\mathcal{A}$}\}.
$$

Let $q_\nu$ be the principal $T$-bundle defined in~\eqref{equ_quotientmapatlevelset}. Define
$$
L_\mathsf{s} \coloneqq q^{-1}_\nu (N_\mathsf{s}).
$$

\begin{proposition}\label{proposition_toricdegtorus}
Each $L_\mathsf{s}$ is a monotone Lagrangian torus in $G/U$.
\end{proposition}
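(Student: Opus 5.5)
The proof splits into three parts: $L_\mathsf{s}$ is Lagrangian, it is a torus, and it is monotone.

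\emph{Lagrangian.} By construction $N_\mathsf{s}$ is a Lagrangian torus of dimension $m$ in $(G/B,\omega_\lambda)$. By~\eqref{equ_Kahlerforms}, the map $q_\nu\colon \mu_T^{-1}(\nu)\to G/B$ is a principal $T$-bundle with $q_\nu^*\omega_\lambda = \omega_{\mathcal{B}(n+1)}|_{\mu_T^{-1}(\nu)}$. This is the standard coisotropic reduction picture: the preimage of a Lagrangian under the reduction map is a $T$-invariant Lagrangian in $G/U$. Concretely, $L_\mathsf{s}=q_\nu^{-1}(N_\mathsf{s})$ has dimension $m+n=\tfrac12\dim_\R G/U$. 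The form $\omega$ vanishes on it because $\omega|_{\mu_T^{-1}(\nu)}$ is pulled back from $\omega_\lambda$, and $\omega_\lambda|_{N_\mathsf{s}}=0$.

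\emph{Torus.} This is the main step. Here $L_\mathsf{s}$ is a principal $T^n$-bundle over $T^m$, and it is a torus if and only if this bundle is trivial. Principal $T^n$-bundles over $T^m$ are classified by $H^2(T^m;\Z^n)$, which is nonzero, so triviality must be checked. I would show it using the Chern classes of the line bundles $\mathcal{L}_j$ from Section~\ref{sec_boundaryliftinglemma}. Each component circle bundle $L_\mathsf{s}\times_{\epsilon_j} S^1\to N_\mathsf{s}$ is the unit circle bundle of $\mathcal{L}_j|_{N_\mathsf{s}}$, so its Euler class is $\iota_N^* c_1(\mathcal{L}_j)$, where $\iota_N\colon N_\mathsf{s}\hookrightarrow G/B$ is the inclusion. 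This class lies in the image of $H^2(G/B;\Z)\to H^2(N_\mathsf{s};\Z)$.

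I would show this restriction map vanishes. Since $G/B$ is simply connected, the long exact sequence of the pair gives $H_2(N)\to H_2(G/B)\to H_2(G/B,N)\to H_1(N)\to 0$. The splitting~\eqref{equ_splittingH2rel}, $H_2(G/B,N)\simeq H_2(G/B)\oplus H_1(N)$, forces the map $H_2(G/B)\to H_2(G/B,N)$ to be injective. Hence $H_2(N)\to H_2(G/B)$ is zero, and by universal coefficients (all groups are free) the restriction $H^2(G/B)\to H^2(N)$ is zero.

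Justifying the splitting for $N_\mathsf{s}$ needs some care. I would use the construction in~\cite{CKKP25}: $N_\mathsf{s}$ is a fiber of the toric-degeneration integrable system lying over the interior of the Newton--Okounkov body. It is therefore contained in an open dense subset symplectomorphic to the interior of a toric chart, so the inclusion $N_\mathsf{s}\hookrightarrow G/B$ factors through a subset homotopy equivalent to $T^m$. That factorization directly shows $H_2(N_\mathsf{s})\to H_2(G/B)$ is zero. With all Euler classes trivial, $L_\mathsf{s}\simeq N_\mathsf{s}\times T\simeq T^{m+n}$.

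\emph{Monotone.} This follows directly from Lemma~\ref{lemma_monotoneLagrangiantori}, since $N_\mathsf{s}$ is monotone by~\cite{CKKP25} and $q_\nu$ preserves both Maslov index and symplectic area. The relevant hypotheses are in place: $G/U$ is simply connected, since $G$ is simply connected and $U$ is connected; and $L_\mathsf{s}$ is orientable and spin, being a torus.
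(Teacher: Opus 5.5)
Your Lagrangian and monotonicity steps are fine, and reducing the torus claim to the vanishing of $\iota_N^*c_1(\mathcal{L}_j)$, $j\in[n]$, is a legitimate reformulation. The gap is in the step that is supposed to deliver this vanishing.

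As you suspected, \eqref{equ_splittingH2rel} is not automatic for a torus in a simply connected manifold. Both the direct-sum decomposition and the identification with $H_2(M,N;\Z)$ amount to $\pi_1(N)$ acting trivially on $\pi_2(M,N)$, that is, to the vanishing of $H_2(N)\to H_2(M)$. So it cannot serve as input, and everything rests on your justification, which does not work. The open set $U\simeq\mathrm{Int}(\Delta^\lambda_\mathsf{s})\times T^m$ has $H_2(U)\simeq\Lambda^2\Z^m\neq 0$ for $m\geq 2$. Moreover, $N_\mathsf{s}\hookrightarrow U$ is itself a homotopy equivalence. Hence the factorization $N_\mathsf{s}\hookrightarrow U\hookrightarrow G/B$ merely restates the problem as the vanishing of $H_2(U)\to H_2(G/B)$. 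The local structure you invoke also carries no information: by Weinstein's neighborhood theorem, every Lagrangian torus has a neighborhood symplectomorphic to an open subset of $\R^m\times T^m$. Any vanishing must therefore come from how $U$ sits globally in $G/B$, and your argument never uses that.

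The missing input has to come from the degeneration. The paper uses the Harada--Kaveh maps $\phi_t$ to transport $N_\mathsf{s}$ through the total space of the toric degeneration to an orbit torus in the toric central fiber $\mathcal{X}_0$. There the bundle is trivial, and homotopy invariance gives triviality over $N_\mathsf{s}$.

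In your language, this works provided the line bundles $\mathcal{L}_j$ extend over the family. This is needed for every $j$, not only for $\mathcal{L}_\lambda$, so it calls for a degeneration compatible with the multigrading of $\C[G/U]$. Then $c_1(\mathcal{L}_j)|_{N_\mathsf{s}}$ corresponds to the restriction of an algebraic line bundle to a compact orbit in the open torus $(\C^*)^m\subseteq\mathcal{X}_0$. That restriction vanishes because algebraic line bundles on $(\C^*)^m$ are trivial.

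Alternatively, you can repair your own route with a collapsing argument.
\begin{itemize}
\item As in Harada--Kaveh, $\phi_1$ restricts to a symplectomorphism onto the smooth locus of $\mathcal{X}_0$. For normal $\mathcal{X}_0$, this locus contains the torus orbits over the open facets of $\Delta^\lambda_\mathsf{s}$.
\item Moving the fiber to a relative interior point of a facet $F$ with primitive normal $u_F$ collapses $S^1_{u_F}$. So each $2$-torus $S^1_{u_F}\times S^1_b\subseteq N_\mathsf{s}$ is null-homologous in $G/B$.
\item The facet normals span $\mathbb{Q}^m$ and $H_2(G/B;\Z)$ is torsion-free, so $H_2(N_\mathsf{s})\to H_2(G/B)$ vanishes.
\end{itemize}
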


\begin{proof}
To prove that $L_\mathsf{s}$ is a torus, we make use of the toric degeneration
$$
\pi \colon \mathcal{X} \to \C
$$ 
whose central fiber $\mathcal{X}_0 \coloneqq \pi^{-1}(0)$ is the toric variety associated with the Newton--Okounkov polytope $\Delta^\lambda_{\mathsf{s}}$, see \cite{FO25}. Let 
$$
\phi_t \colon \mathcal{X}_1 \to \mathcal{X}_{1-t}
$$ 
be the globally continuous map constructed in \cite{HK15}. The restriction of $\phi_t$ to $N_{\mathsf{s}}$ is smooth and $\phi_t$ is a symplectomorphism on an open dense subset containing $N_{\mathsf{s}}$. Hence, for each $t \in [0,1]$, the image $\phi_t(N_{\mathsf{s}})$ is a Lagrangian torus in $\mathcal{X}_{1-t}$. Moreover, the maps $\phi_t$ define a smooth isotopy
$$
\psi \colon [0,1] \times {N}_{\mathsf{s}} \to \bigcup_{t \in [0,1]} \phi_t(N_\mathsf{s}) (\subseteq \mathcal{X}), \quad (t,x) \mapsto \phi_t(x).
$$

For each $t \in [0,1]$, let
$$
q_t \colon L_t \longrightarrow \phi_t(N_{\mathsf{s}})
$$
be the principal $T$-bundle obtained by restricting $q_\nu$. By pulling it back via $\phi_t$, we obtain a principal $T$-bundle $\phi_t^*L_t \longrightarrow N_{\mathsf{s}}$. We obtain a smooth family of principal $T$-bundles
$$
q \colon \mathcal{L}_\mathsf{s} \coloneqq \bigcup_{t \in [0,1]} \phi_t^* L_t \to N_\mathsf{s}.
$$
Thus the bundles $q_0$ and $q_1$ are homotopic.

Since the central fiber $\mathcal{X}_0$ is a toric variety, the bundle $q_0$ is trivial. Therefore $q_1$ is also trivial, and hence $L_\mathsf{s} \simeq T^{n+m}$. Finally, the monotonicity of $L_\mathsf{s}$ follows from Lemma~\ref{lemma_monotoneLagrangiantori}.
\end{proof}

\begin{remark}
In \cite{HL20}, Hoffman--Lane constructed completely integrable systems on $G/U$ by generalizing the construction of Harada--Kaveh in \cite{HK15}, which produces integrable systems on smooth projective varieties, to singular quasi-projective varieties. The Lagrangian tori $L_\mathsf{s}$ should coincide with fibers of these completely integrable systems.
\end{remark}

Conjecturally, the disk potential of the monotone Lagrangian torus $L_\mathsf{s}$ produces the mirror cluster chart 
$$
(\mathcal{X}_\mathsf{s}, W |_{\mathcal{X}_\mathsf{s}})
$$ 
where $W$ is the GHKK superpotential in~\eqref{equ_Wtheta}.

To describe the disk potential of $N_\mathsf{s}$, we briefly review the string polytope associated with a reduced expression $\underline{w}_0$. Consider $\R^m$ whose coordinates are ${u}_j$ for $j \in J$. The \emph{string polytope} $\Delta^\lambda_{\underline{w}_0}$ is defined by the intersection between the half spaces in $(1)$ together with the following inequalities~\eqref{equ_lambdacone}.

\begin{itemize}
\item[(3)] For each ascending array of chambers $\mathsf{r} \in \mathcal{R}_{\underline{w}_0}$, let $h_{\mathsf{r}}({\mathbf{u}})$ denote the sum of the chamber variables corresponding to the chambers in $\mathsf{r}$. If the array $\mathsf{r}$ lies in the $i$-th column, then define the half space by
\begin{equation}\label{equ_lambdacone}
(\lambda_i - \lambda_{i+1}) - h_{\mathsf{r}}({\mathbf{u}}) \geq 0.
\end{equation}
Let $\mathbf{v}_\mathsf{r}$ be the inward primitive normal vector to the half space~\eqref{equ_lambdacone}.
\end{itemize}

\begin{remark}
The string polytope $\Delta_{\underline{w}_0}^\lambda$ is obtained as the intersection of two rational polyhedral cones, the \emph{string cone} $\mathcal{C}_{\underline{w}_0}^s$  and the \emph{$\lambda$-cone} $\mathcal{C}_{\underline{w}_0}^{\lambda}$ introduced in~\cite{Lit98, BZ01}. Indeed, it is a Newton--Okounkov body given by a sequence of resolutions of Schubert varieties given by  ${\underline{w}_0}$ by Kaveh \cite{Kav15}. We follow the description of the string cone due to Gleizer--Postnikov \cite{GP00} and the description of the $\lambda$-cone due to Rusinko \cite{Rus08}, both of which are formulated in terms of wiring diagrams. Throughout this paper, we describe these cones in terms of chamber variables. We refer to \cite[Sections~2 and~4]{CKLP21} for further details.
\end{remark}

Suppose that the chamber variables in the $i$-th column, listed from top to bottom, are
$$
u_{\underline{i}},\, u_{i_1},\, \dots,\, u_{i_\kappa}.
$$
Define the vector $\mathbf{v}_i=(v_{i,j})_{j\in\underline{J}}$ by
$$
v_{i,j} =
\begin{cases}
1 &\mbox{if the chamber variable $u_j$ lies in the $i$-th column}, \\
0 &\mbox{otherwise.}
\end{cases}
$$

Suppose that $\mathsf{r} \in \mathcal{R}_{\underline{w}_0}(i)$ consists of chambers $u_{i_j},\,u_{i_{j+1}}, \dots, u_{i_\kappa}$. Let $\mathsf{q}$ be the descending array consisting of chambers $u_{\underline{i}}, u_{i_1}, \dots, u_{i_{j-1}}$. Then 
\begin{equation}\label{equ_inwardsum}
\mathbf{v}_\mathsf{r} + \mathbf{v}_i = \mathbf{v}_\mathsf{q}.
\end{equation}

Let $\mathbf{z}$ and $\mathbf{y}$ be the exponential variables corresponding to the chamber variables indexed by $J$ and $\underline{J}$, respectively. We are now ready to state the main theorem.

\begin{theorem}\label{theorem_GHKKdisk}
Suppose that the disk potential of $N_{\underline{w}_0}$ is given by
\begin{equation}\label{equ_diskNs}
W_{N_{\underline{w}_0}}(\mathbf{z}) =  \sum_{\mathsf{p} \in \mathcal{P}_{\underline{w}_0}} \mathbf{z}^{\mathbf{v}_\mathsf{p}} + \sum_{\mathsf{r} \in \mathcal{R}_{\underline{w}_0}}  \mathbf{z}^{\mathbf{v}_\mathsf{r}}.
\end{equation}
Then the disk potential of $L_{\underline{w}_0}$ is equal to the GHKK superpotential $W_{\underline{w}_0}$ in~\eqref{equ_GHKKssupter}.
\end{theorem}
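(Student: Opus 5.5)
The plan is to apply Theorem~\ref{theorem_maintheorem2} to $N = N_{\underline{w}_0}$. Its hypotheses hold: $N_{\underline{w}_0}$ is monotone by construction, and $L_{\underline{w}_0}$ is a torus by Proposition~\ref{proposition_toricdegtorus}. So $W_{L_{\underline{w}_0}}(\mathbf{y}) = \sum_\beta n_\beta \mathbf{y}^{\partial^\sharp(\beta)}$. By~\eqref{equ_diskNs}, the classes with $n_\beta \neq 0$ are indexed by rigorous paths $\mathsf{p}$ and ascending arrays $\mathsf{r}$, with $\partial\beta_\mathsf{p} = \mathbf{v}_\mathsf{p}$ and $\partial\beta_\mathsf{r} = \mathbf{v}_\mathsf{r}$ in the $\mathbf{z}$-coordinates (each with $n_\beta = 1$). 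It therefore remains to compute $\partial^\sharp(\beta_\mathsf{p})$ and $\partial^\sharp(\beta_\mathsf{r})$ in $\Z^n \times \Z^m$ and compare them with $-\mathbf{v}_\mathsf{p}$ and $-\mathbf{v}_\mathsf{q}$, after identifying the $\mathbf{y}$-coordinates with $\mathbf{x}$ suitably; the overall sign is absorbed by the inversion $\mathbf{y}\mapsto \mathbf{y}^{-1}$ or by the choice of basis.

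\textbf{Step 1: choice of basis and spherical classes.} I would choose $\beta_1,\dots,\beta_m$ so that the $\mathbf{b}$-vector of a class $\beta$ is just $\partial\beta$ in the chamber coordinates, up to sign. For each $\beta$, the spherical class $\alpha = \beta + \sum b_j\beta_j$ is then controlled by symplectic area. Since everything is monotone with anticanonical $\lambda$, every Maslov index two disk has area $1$ in the normalization where $\omega_\lambda$ pairs to the Chern number. The areas of the toric-model disks are given by the affine functions defining the facets: $h_\mathsf{p}(\mathbf{u})$ for a string-cone facet, and $(\lambda_i-\lambda_{i+1}) - h_\mathsf{r}(\mathbf{u})$ for a $\lambda$-cone facet. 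The Schubert coordinates $a_i$ of $\alpha$ are the coefficients of $\lambda_i - \lambda_{i+1}$ in the area functional, because of~\eqref{equ_c1w} and Lemma~\ref{lemma_chernnumbercal}.

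\textbf{Step 2: reading off $\mathbf{a}$.} For a rigorous path, the facet function $h_\mathsf{p}$ has no $\lambda$-dependence, so $\mathbf{a} = 0$ and $\partial^\sharp(\beta_\mathsf{p}) = (0, -\mathbf{v}_\mathsf{p})$. This matches the first sum in~\eqref{equ_GHKKssupter}. For an ascending array $\mathsf{r}$ in column $i$, the facet function contains $\lambda_i - \lambda_{i+1}$ with coefficient one, so $\mathbf{a}$ is the $i$-th unit vector. The lifted boundary then becomes $-(\mathbf{v}_\mathsf{r} + \mathbf{v}_i)$, where the frozen coordinate $u_{\underline{i}}$ corresponds to $\sigma_i$. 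By~\eqref{equ_inwardsum} this equals $-\mathbf{v}_\mathsf{q}$ for the complementary descending array $\mathsf{q}$. The map $\mathsf{r}\mapsto \mathsf{q}$ is a bijection $\mathcal{R}_{\underline{w}_0}(i)\to\mathcal{Q}_{\underline{w}_0}(i)$, since both are indexed by the splitting point of the column. Summing over all classes gives the second sum.

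\textbf{Main obstacle.} The main difficulty is Step~1: justifying that the Schubert coordinates of $\alpha$, and the sign and normalization conventions, come out exactly as described. In particular, the $\lambda$-dependence of the facet area must translate into $a_i$ via $\langle c_1(\mathcal{L}_\lambda),\alpha\rangle = \sum a_i(\lambda_i-\lambda_{i+1})$ with the correct sign. I would first verify this identification by computing areas as affine functions of $\lambda$, using the varying-$\lambda$ family of Kähler forms~\eqref{equ_FubiniStudy}. I would then check it against the $\mathrm{SL}_4$ example of Example~\ref{example_GHKKsuperpo}.
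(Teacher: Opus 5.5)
\textbf{Gap: the reference classes $\beta_j$ are never pinned down.} Your overall route is the paper's. You apply Theorem~\ref{theorem_maintheorem2}, show that path classes lift with $\mathbf a=0$ and array classes in column $i$ lift with $\mathbf a=e_i$, then use the coordinate change $y_{\underline i}\mapsto\mathbf x^{\mathbf v_i}$, the identity~\eqref{equ_inwardsum}, and the bijection $\mathcal R_{\underline w_0}(i)\to\mathcal Q_{\underline w_0}(i)$. Reading $\mathbf a$ off the coefficients of $\lambda_i-\lambda_{i+1}$ as $\lambda$ varies is also the right way to make precise the paper's step ``$\alpha$ has area $\lambda_i-\lambda_{i+1}$, hence $\alpha=[X_{s_i}]$''. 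At the single monotone $\lambda$, the area only detects $\sum_i a_i$.

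In Step~1 you fix $\beta_1,\dots,\beta_m$ only through their boundaries ($\partial\beta_j=\pm e_j$ in chamber coordinates). But $\mathbf a$ depends on the classes themselves, not just on their boundaries. Since $\omega_\lambda(\alpha)=\omega_\lambda(\beta)+\sum_j b_j\,\omega_\lambda(\beta_j)$, the $\lambda$-dependence of the area of $\alpha$ comes from the $\beta_j$ as well as from $\beta$. For example, replacing $\beta_1$ by $\beta_1+[X_{s_1}]$ still satisfies your Step~1 condition but shifts $\mathbf a$ by $b_1e_1$. A path class with $b_1\neq 0$ would then no longer have $\mathbf a=0$. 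Moreover, for an arbitrary class with prescribed boundary you have no area formula at all, since only the facet (disk) classes have areas given by facet functions. So the conclusions of Step~2 do not follow from what you wrote.

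\textbf{The fix, as in the paper.} Take the $\beta_j$ to be the basic disk classes of the rigorous paths $\mathsf p_j$ from \cite[Proposition~5.6]{CKLP21}, for which $h_{\mathsf p_j}(\mathbf u)=u_j+h'_{\mathsf p_j}(u_{j+1},\dots,u_m)$. Their facet functions have no constant term, so their areas carry no $\lambda$-dependence. By unitriangularity, their boundaries form a $\Z$-basis of $\pi_1(N_{\underline w_0})$. Without some integrality input of this kind you cannot even guarantee that integral reference classes with $\lambda$-free area exist. If you want $\partial\beta_j=\pm e_j$, take the corresponding unitriangular integer combinations of these path classes. Then the area of $\alpha$ is identically $0$ for a path class and identically $\lambda_i-\lambda_{i+1}$ for an array in column $i$, and your Step~2 goes through.

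\textbf{A minor sign point.} Do not fix signs by inverting all of $\mathbf y$, since that also flips the $\sigma_i$-components. Choosing $\partial\beta_j=-e_j$ gives $\partial^\sharp(\beta_{\mathsf p})=(0,-\mathbf v_{\mathsf p})$ and $\partial^\sharp(\beta_{\mathsf r})=(-e_i,-\mathbf v_{\mathsf r})$ directly.
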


\begin{remark}
When $\underline{w}_0$ is the standard reduced expression of $w_0$, the corresponding string polytope is unimodularly equivalent to the Gelfand--Zeitlin polytope by \cite{Lit98}. By \cite{NNU10}, the disk potential of $N_{\underline{w}_0}$ is computed and is of the form~\eqref{equ_diskNs} (up to a coordinate change). 

More generally, using the method of \cite{NNU10}, the disk potential function of $N_\mathsf{s}$ in $G/B$ is computed in \cite{CKLP23} and is precisely of the form~\eqref{equ_diskNs} in the case where a reduced expression of $w_0$ has small index. In particular, every reduced expression of $w_0$ has small index for $\mathrm{SL}_n(\C)/B$ with $n \le 5$, so the hypothesis of Theorem~\ref{theorem_GHKKdisk} is satisfied in these cases.
\end{remark}

\begin{proof}
Let $\underline{w}_0$ be a reduced expression of the longest element $w_0$ of $\mathfrak{S}_{n+1}$. 
We first recall the following results from \cite{CKLP21}. 
\begin{itemize}
\item \cite[Proposition 4.5]{CKLP21} (see also \cite{KS22})  For every rigorous path $\mathsf{p}$ and every ascending array $\mathsf{r}$, the inequalities in~\eqref{equ_stringincone} and~\eqref{equ_lambdacone} define facets of the string polytope. Namely, each linear function $h_{\mathsf{p}}$ and $h_{\mathsf{r}}$ supports the relative interior of a facet.
\item \cite[Proposition 5.6]{CKLP21} For each $j \in [m]$, there exists a rigorous path $\mathsf{p}_j$ such that 
\begin{equation}\label{equ_pjpeak}
h_{\mathsf{p}_j} (\mathbf{u}) = u_j + h^\prime_{\mathsf{p}_j}(u_{j+1}, \dots, u_m) \geq 0 \mbox{ for $j \in [m]$}
\end{equation}
where $h^\prime_{\mathsf{p}_j}$ is a linear function in the variables $u_{j+1}, \dots, u_m$.
\end{itemize} 

Let $\beta_j \in \pi_2(G/B, N_{\underline{w}_0})$ be the basic disk class corresponding to the facet defined by $h_{\mathsf{p}_j} = 0$ in~\eqref{equ_pjpeak}. We choose the classes $\beta_1, \dots, \beta_m$ so that their boundary classes form a basis of $\pi_1(N_{\underline{w}_0})$. We compute the boundary lifting map with respect to this basis.  

We divide the lifting process into two cases. Let $\beta \in \pi_2(G/B, N_{\underline{w}_0})$ with $n_\beta \neq 0$. Then $\beta$ is represented by a basic holomorphic disk corresponding either to a rigorous path $\mathsf{p} \in \mathcal{P}_{\underline{w}_0}$ or to an ascending array $\mathsf{r} \in \mathcal{R}_{\underline{w}_0}$.  

\begin{enumerate}
\item For each rigorous path $\mathsf{p} \in \mathcal{P}_{\underline{w}_0}$, let $\beta \in \pi_2(G/B,N_{\underline{w}_0})$ be the basic disk class corresponding to $\mathsf{p}$. We may write 
$$
\partial \beta = \sum_{j} b_j  \partial\beta_j.
$$
Then the associated spherical class is trivial, namely, 
$$
\alpha = \beta + \sum_{j} b_j \beta_j = 0 \in \pi_2(G/B).
$$
Thus, the Chern number of every line bundle on $\alpha$ is zero so that $\mathbf{a} = \mathbf{0}$. Therefore, by~\eqref{equ_linearcomofpartialbeta}, we have 
\begin{equation}\label{equ_111}
\partial \beta^\sharp = - \sum_{j} b_j \partial \beta_j^\sharp.
\end{equation}
\item Next, consider an ascending array $\mathsf{r} \in \mathcal{R}_{\underline{w}_0}$. Suppose that $\mathsf{r}$ lies in the $i$-th column. Let $\beta \in \pi_2(G/B, N_{\underline{w}_0})$ be the basic disk class corresponding to $\mathsf{r}$. As before, we may write 
$$
\partial \beta = \sum_{j} b_j \partial \beta_j.
$$
The associated capped spherical class $\alpha$ has symplectic area $\lambda_i - \lambda_{i+1}$. It follows that $\alpha = [X_{s_i}]$. Therefore, by~\eqref{equ_linearcomofpartialbeta}, we have
\begin{equation}\label{equ_222}
\partial \beta^\sharp = - \sigma_i - \sum_{j} b_j \partial \beta_j^\sharp.
\end{equation}
\end{enumerate}

Let $\mathbf{y}$ be the exponential variables corresponding to $\{ u_{\underline{i}} \mid i \in \underline{I} \} \cup \{ u_j \mid j \in J\}$. According to~\eqref{equ_diskNs}, ~\eqref{equ_111},~\eqref{equ_222}, and Theorem~\ref{theorem_maintheorem2}, the disk potential of $L_\mathsf{s}$ is
\begin{equation}\label{equ_lifteddiskpotential}
W_{L_\mathsf{s}}(\mathbf{y}) =  \sum_{\mathsf{p} \in \mathcal{P}_{\underline{w}_0}} \mathbf{y}^{-\mathbf{v}_\mathsf{p}} + \sum_{i=1}^n \sum_{\mathsf{r} \in \mathcal{R}_{\underline{w}_0}(i)}  \mathbf{y}^{-\mathbf{v}_\mathsf{r}} \cdot y^{-1}_{\underline{i}}.
\end{equation}
To identify it with GHKK superpotential~\eqref{equ_GHKKssupter}, we perform the coordinate change 
\begin{equation}\label{equ_coordinatechange}
y_{\underline{i}} \mapsto \mathbf{x}^{\mathbf{v}_i} \mbox{ for $i \in \underline{I}$}, \quad y_j \mapsto x_j \mbox{ for $j \in J$}.
\end{equation}
Using~\eqref{equ_inwardsum}, we recover the GHKK superpotential as follows.
\begin{align*}
W_{L_\mathsf{s}}((\mathbf{y}) |_{\eqref{equ_coordinatechange}} &=  \sum_{\mathsf{p} \in \mathcal{P}_{\underline{w}_0}} \mathbf{x}^{- \mathbf{v}_\mathsf{p}} + \sum_{i=1}^n \sum_{\mathsf{r} \in \mathcal{R}_{\underline{w}_0}(i)}  \mathbf{x}^{-\mathbf{v}_\mathsf{r}} \cdot {\mathbf{x}^{-\mathbf{v}_i}} = \sum_{\mathsf{p} \in \mathcal{P}_{\underline{w}_0}} \mathbf{x}^{- \mathbf{v}_\mathsf{p}} + \sum_{\mathsf{q} \in \mathcal{Q}_{\underline{w}_0}}  \mathbf{x}^{-\mathbf{v}_\mathsf{q}}.
\end{align*}
\end{proof}

We close this section with explicit examples.

\begin{example}
Consider $\underline{w}_0 = s_1 s_2 s_1 s_3 s_2 s_1$ in Example~\ref{example_twored} and see Figure~\ref{figure_wiringoriented} for the corresponding wiring diagram $G_{\underline{w}_0}$. By \cite[Corollary 6.14]{CKLP23}, the disk potential of $N_{\underline{w}_0}$ in $\mathrm{SL}_4(\C)/B$ is
{\small
\begin{align*}
W_{N_{\underline{w}_0}}(\mathbf{z}) = z_1z_2z_4 + z_2z_4 + z_4 + z_3 z_5 + z_5 + z_6 + \frac{1}{z_1z_3z_6} + \frac{1}{z_3z_6} + \frac{1}{z_6} + \frac{1}{z_2 z_5} + \frac{1}{z_5} + \frac{1}{z_4}.
\end{align*}
}
By~\eqref{equ_lifteddiskpotential}, the disk potential of $L_{\underline{w}_0}$ in $\mathrm{SL}_4(\C)/U$ is 
{\small
\begin{align*}
W_{L_{\underline{w}_0}}(\mathbf{y}) = \frac{1}{y_1y_2y_4} + \frac{1}{y_2y_4} + \frac{1}{y_4} + \frac{1}{y_3 y_5} + \frac{1}{y_5} + \frac{1}{y_6} + \frac{y_1y_3y_6}{y_{\underline{1}}} + \frac{y_3y_6}{y_{\underline{1}}} + \frac{y_6}{y_{\underline{1}}} + \frac{y_2 y_5}{y_{\underline{2}}} + \frac{y_5}{y_{\underline{2}}} + \frac{y_4}{y_{\underline{3}}}.
\end{align*}
}
The coordinate change~\eqref{equ_coordinatechange} is 
$$
y_{\underline{1}} \leftrightarrow x_{\underline{1}}x_1x_3x_6, \quad y_{\underline{2}} \leftrightarrow x_{\underline{2}}x_2x_5, \quad y_{\underline{3}} \leftrightarrow x_{\underline{3}}x_4, \quad y_j \leftrightarrow x_j \mbox{ for $j \in J$}.
$$
Under this coordinate change,  $W_{L_{\underline{w}_0}}(\mathbf{y})$ agrees with $W_{\underline{w}_0}(\mathbf{x})$ in Example~\ref{example_GHKKsuperpo}.

Consider $\underline{w}_{0^\prime} = s_2 s_1 s_3 s_2 s_1 s_3$ in Example~\ref{example_twored} and see Figure~\ref{figure_wiringoriented} for the corresponding wiring diagram $G_{\underline{w}_{0^\prime}}$. By \cite[Corollary 6.14]{CKLP23}, the disk potential of $N_{\underline{w}_{0^\prime}}$ in $\mathrm{SL}_4(\C)/B$ is
{\small
\begin{align*}
W_{N_{\underline{w}_{0^\prime}}}(\mathbf{z}) = z_1z_2z_3z_4 +z_2z_3z_4 + z_2z_4 + z_3z_4 + z_4 + z_5 + z_6 + \frac{1}{z_2 z_5} + \frac{1}{z_5} + \frac{1}{z_1z_4} + \frac{1}{z_4} + \frac{1}{z_3z_6} + \frac{1}{z_6} .
\end{align*}
}
By~\eqref{equ_lifteddiskpotential}, the disk potential of $L_{\underline{w}_{0^\prime}}$ in $\mathrm{SL}_4(\C)/U$ is 
{\small
\begin{align*}
W_{L_{\underline{w}_{0^\prime}}}(\mathbf{y}) = \frac{1}{y_1y_2y_3y_4} + \frac{1}{y_2y_3y_4} + \frac{1}{y_2y_4} + \frac{1}{y_3y_4} + \frac{1}{y_4} + \frac{1}{y_5} + \frac{1}{y_6} + \frac{y_2 y_5}{y_{\underline{1}}} + \frac{y_5}{y_{\underline{1}}} + \frac{y_1y_4}{y_{\underline{2}}} + \frac{y_4}{y_{\underline{2}}} + \frac{y_3y_6}{y_{\underline{3}}} + \frac{y_6}{y_{\underline{3}}}.
\end{align*}}
The coordinate change~\eqref{equ_coordinatechange} is 
$$
y_{\underline{1}} \leftrightarrow x_{\underline{1}}x_2x_5, \quad y_{\underline{2}} \leftrightarrow x_{\underline{2}}x_1x_4, \quad y_{\underline{3}} \leftrightarrow x_{\underline{3}}x_3x_6, \quad y_j \leftrightarrow x_j \mbox{ for $j \in J$}.
$$
Under this coordinate change,  $W_{L_{\underline{w}_{0^\prime}}}(\mathbf{y})$ agree with $W_{\underline{w}_{0^\prime}}(\mathbf{x})$ in Example~\ref{example_GHKKsuperpo}.
\end{example}

\providecommand{\bysame}{\leavevmode\hbox to3em{\hrulefill}\thinspace}
\providecommand{\MR}{\relax\ifhmode\unskip\space\fi MR }
\providecommand{\MRhref}[2]{%
  \href{http://www.ams.org/mathscinet-getitem?mr=#1}{#2}
}
\providecommand{\href}[2]{#2}

\end{document}